\documentclass[10pt]{article}
%\pdfoutput=1

\usepackage[english]{babel}
\usepackage[utf8]{inputenc}

\usepackage{amsmath}
\usepackage{amssymb}
\usepackage{amsthm}

\usepackage[centertableaux]{ytableau}
\ytableausetup{boxsize=0.5em}

\usepackage{array}
\usepackage{caption}
\newcolumntype{C}[1]{>{\centering\arraybackslash}m{#1}}   %% horizontally and vertically centered
\newcolumntype{M}{>{$}c<{$}}   %% horizontally and vertically centered

\usepackage{mathtools}

%\usepackage[
	%style=alphabetic,
	%sorting=nty,
	%url=false,
	%natbib=true,
	%backend=bibtexu,
	%defernumbers=true
%]{biblatex}
%% The .bib-file(s) for this document
%\addbibresource{wgraph.bib}
\usepackage[numbers]{natbib}
\bibliographystyle{plain}

\setlength{\parindent}{0.8em}
\setlength{\parskip}{0.3em}

\usepackage[pdfpagelabels=true]{hyperref}
\hypersetup{%
pdfauthor = {Johannes Hahn},
pdftitle = {W-graphs and Gyoja's W-graph algebra}
}

\usepackage{tikz}
\tikzset
{
	desc/.style=
	{
		fill=white,inner sep=2pt,font=\scriptsize
	},
	Vertex/.style =
	{
		inner sep = 1pt,
		outer sep = 2pt,
		minimum size=15pt,
		circle,
		draw=black!70,
		thick,
		font=\scriptsize
	},
	EdgeT/.style =
	{
		ultra thick,
		draw=black,
		font=\scriptsize
	},
	EdgeI/.style =
	{
		->,
		draw=black!70,
		font=\scriptsize
	}
}

\newtheorem{theorem}{Theorem}
\newtheorem{lemma}[theorem]{Lemma}
\newtheorem{corollary}[theorem]{Corollary}

\newtheorem{definition}[theorem]{Definition}
\newtheorem{remark}[theorem]{Remark}
\newtheorem{example}[theorem]{Example}
\newtheorem{conjecture}[theorem]{Conjecture}

\newcommand{\IN}{\mathbb{N}}
\newcommand{\IZ}{\mathbb{Z}}
\newcommand{\IC}{\mathbb{C}}

\newcommand{\abs}[1]{{\left\lvert#1\right\rvert}}
\newcommand{\ceil}[1]{\left\lceil #1 \right\rceil}
\newcommand{\floor}[1]{\left\lfloor #1 \right\rfloor}
\newcommand{\Set}[1]{{\left\lbrace#1\right\rbrace}}

\newcommand{\isomorphic}{\cong}

\DeclareMathOperator{\sgn}{sgn}
\DeclareMathOperator{\Hom}{Hom}
\DeclareMathOperator{\im}{im}
\DeclareMathOperator{\ord}{ord}

\DeclareMathOperator{\Irr}{Irr}
\DeclareMathOperator{\rad}{rad}

\title{$W$-graphs and Gyoja's $W$-graph algebra \footnotemark[1] }
\author{Johannes Hahn \footnotemark[2] }

\begin{document}
\maketitle

\footnotetext[1]{MSC classes: 20C08, 20F55, 16G30}
\footnotetext[2]{Formerly Friedrich-Schiller-University Jena; Contact information -- eMail: \href{mailto:johannes@hahn-rostock.de}{johannes@hahn-rostock.de}, Telephone: (+49)1781472539 (Germany)}

\begin{abstract}
Let $(W,S)$ be a finite Coxeter group. Kazhdan and Lusztig introduced the concept of $W$-graphs and Gyoja proved that every irreducible representation of the Iwahori-Hecke algebra $H(W,S)$ can be realized as a $W$-graph. Gyoja defined an auxiliary algebra for this purpose which  -- to the author's best knowledge -- was never explicitly mentioned again in the literature after Gyoja's proof (although the underlying ideas were reused). The purpose of this paper is to resurrect this $W$-graph algebra and study its structure and its modules. A new explicit description of it as a quotient of a certain path algebra is given. A general conjecture is proposed that -- if it turns out to be true -- would imply strong restrictions on the structure of $W$-graphs. This conjecture is then proven for Coxeter groups of type $I_2(m)$, $B_3$ and $A_1$ -- $A_4$.
\end{abstract}

\section{Introduction}

Let $(W,S)$ be a finite Coxeter group. Kazhdan and Lusztig introduced $W$-graphs in \cite{KL} in an attempt to capture certain combinatorial features of Kazhdan-Lusztig-cells and of the cell representations associated to them. By definition every cell representation is a $W$-graph representation. The converse is not true.

Perhaps the most general result in that regard Gyoja proved that every irreducible  representation (and hence every reducible representation as well) of the Hecke algebra $H(W,S)$ can be realized as a $W$-graph representation if $W$ is finite (see \citep[2.3.(1)]{Gyoja}). In that proof the Iwahori-Hecke algebra is embedded into a larger algebra, which I will denote $\Omega$ in this paper, and it is proven that there exists a left inverse of this embedding. The $W$-graph algebra $\Omega$ is constructed in such a way that its modules correspond to $W$-graphs (up to choice of an appropriate basis). Using any one of these left inverses, every $H$-module can be considered as an $\Omega$-module and the result follows.

Gyoja's proof is non-constructive as it does not provide a concrete left inverse of the embedding $H\hookrightarrow\Omega$ and does not offer additional information about the $W$-graphs that were constructed in this fashion or any information about general $W$-graphs. In my thesis \cite{hahn2013diss} I discovered that a careful analysis of $\Omega$ reveals a fine structure that gives much more detailed information about $W$-graphs. An explicit left inverse utilising Lusztig's asymptotic algebra is also provided in \citep[Satz 4.3.2]{hahn2013diss}.

The starting point for this analysis is the observation that $\Omega$ is a quotient of a path algebra over a quiver which is describable entirely in terms of the Dynkin diagram, a fact that is implicitly contained in Gyoja's paper but was not interpreted in that way. Gyoja's definition \citep[2.5]{Gyoja} gives elements of $\Omega$ which basically realize the vertex idempotents and the edge elements of a path algebra (this will be made precise in Lemma \ref{lemma:Xi_path_alg}). The first main result of my paper is to give an explicit set of relations for this quotient (Theorem \ref{theorem:Omega_relations}). The relations are inspired by the work of Stembridge \cite{Stembridge2008admissble} where similar equations appear for the edge weights of so called ``admissible'' $W$-graphs although they were neither formulated for general $W$-graphs nor interpreted as relations for an underlying algebra. This set of relations seems to be different from the presentation Gyoja gives in the appendix of his paper.

\medbreak
Once this new presentation of $\Omega$ is established it will be applied to breaking down the structure of $\Omega$ further. At the moment this is only done for some small Coxeter groups by a case-by-case analysis but the proofs are so similar in spirit that I proposed a general conjecture in my thesis whose essence is that $\Omega$ should also be a quotient of a generalized path algebra over a different quiver which should have $\Irr(W)$ as its vertex set and should be acyclic. The algebras associated to the vertices should be matrix algebras.

In the cases for which the conjecture is true, it has several important consequences like the following:
\begin{itemize}
	\item $k\Omega$ is finitely generated as a $k$-module where $k$ is a so called good ring for $(W,S)$, i.e. a ring $k\subseteq\mathbb{C}$ with $2\cos(\tfrac{2\pi}{m_{st}})\in k$ for all $s,t\in S$ and $p\in k^\times$ for all bad primes $p$. (See \citep[table 1.4]{geckjacon} for a detailed description of what that means for each type of finite Coxeter group.)
	\item The Jacobson radical $\rad(k\Omega)$ is finitely generated by an explicitly describable finite list of elements and $k\Omega/\rad(k\Omega) \isomorphic \prod_{\lambda\in\Irr(W)} k^{d_\lambda\times d_\lambda}$ where $d_\lambda$ denotes the degree of the irreducible character $\lambda$. This implies that Gyoja's conjecture (c.f. \citep[2.18]{Gyoja}) holds.
	\item There is an enumeration $\lambda_1,\ldots,\lambda_n$ of $\Irr(W)$ such that every $k\Omega$-module $V$ has a natural filtration
	\[0 = V^0 \subseteq V^1 \subseteq \ldots \subseteq V^n = V\]
	which realizes the decomposition of $V$ into irreducibles in the sense that $V^i / V^{i-1}$ is isomorphic to a direct sum of irreducibles of isomorphism class $\lambda_i$.
\end{itemize}

Because of the last consequence I named the conjecture ``$W$-graph decomposition conjecture''. The second consequence and in particular the connection to Gyoja's conjecture was my original motivation for investigating the $W$-graph algebra and its fine structure. By the time of writing, the decomposition conjecture has been proven for Coxeter groups of types $A_1$--$A_4$, $I_2(m)$ and $B_3$.

\medbreak
The paper is organized as follows: The first section introduces some notation, recalls the definition of $W$-graphs (following \cite{geckjacon} which is slightly more general than Kazhdan's and Lusztig's), the definition of the $W$-graph algebra (following \cite{Gyoja} though with a different notation) and proves some basic lemmas establishing the connection between $W$-graphs and $\Omega$-modules. Section \ref{section:relations} is devoted to stating and proving an explicit description of $\Omega$ in terms of generators and relations which will be the basis for all subsequent proofs. Section \ref{section:decomposition_conjecture} contains the statement of the decomposition conjecture and a short discussion of its consequences, while Section \ref{section:proof_of_conjecture} is devoted to the proofs of the conjecture for small Coxeter groups.

\section{Preliminaries}

\subsection{Notation}

Throughout the paper fix a finite Coxeter system $(W,S)$. The Iwahori-Hecke algebra $H=H(W,S)$ of $(W,S)$ is the $\mathbb{Z}[v^{\pm 1}]$-algebra (where $v$ is an indeterminate) which is freely generated by $(T_s)_{s\in S}$ subject only to the relations
\[\forall s\in S: T_s^2 = 1 + (v-v^{-1}) T_s\quad\textrm{and}\]
\[\forall s,t\in S: \Delta_{m_{st}}(T_s,T_t)=0\]
where $m_{st}$ denotes the order of $st\in W$ and $\Delta_m(x,y)$ is the \emph{$m$-th braid commutator} of ring elements $x$ and $y$ which is defined as follows
\[\Delta_m(x,y) := \underbrace{xyx\ldots}_{m\,\textrm{factors}} - \underbrace{yxy\ldots}_{m\,\textrm{factors}}\]
In particular $\Delta_0(x,y)=0$, $\Delta_1(x,y)=x-y$, $\Delta_2(x,y)=xy-yx$, $\Delta_3(x,y)=xyx-yxy$ and so on.

\medbreak
Also fix a good ring for $(W,S)$, that is, a ring $k\subseteq\mathbb{C}$ with $2\cos(\tfrac{2\pi}{m_{st}})\in k$ for all $s,t\in S$ and $p\in k^\times$ for all so called bad primes $p$. (See \citep[table 1.4]{geckjacon} for a detailed description of what that means for each type of finite Coxeter groups.)

A ring is good if it is ``big enough'' for the purposes of representation theory of Coxeter groups. For example every good field is a splitting field for $W$.

\medbreak
%All rings will be rings with identity and all ring homomorphisms respect identities. Commutative rings will usually be denoted by $k$. If $k$ is a commutative ring, a $k$-algebra is a ring $A$ equipped with a ring homomorphism $k\to Z(A)$ so that $A$ becomes a $k$-$k$-bimodule and multiplication in $A$ is $k$-bilinear.
%
If $A$ is a $k$-algebra and $k'$ is a commutative $k$-algebra, then $k'A$ will be used as shorthand for the $k'$-algebra $k'\otimes_k A$. Similarly the abbreviation $k'V$ will be used for the $k'A$-module $k'\otimes_k V$ if $V$ is an $A$-module.

\subsection{$W$-graphs}%\label{section:W_graphs}

\begin{definition}[c.f. \cite{KL} and \cite{geckjacon}]
%Let $k$ be a commutative ring.
A \emph{$W$-graph with edge weights in $k$} is a triple $(\mathfrak{C},\mathcal{I},m)$ consisting of a finite set $\mathfrak{C}$ of \emph{vertices}, a \emph{vertex labelling} map $\mathcal{I}: \mathfrak{C}\to \{I \mid I\subseteq S\} $ and a family of \emph{edge weight} matrices $m^s\in k^{\mathfrak{C}\times\mathfrak{C}}$ for $s\in S$ (here $k^{\mathfrak{C}\times\mathfrak{C}}$ denotes the ring of matrices whose rows and columns are indexed with $\mathfrak{C}$ and whose entries are elements of $k$) such that the following conditions hold:
\begin{enumerate}
	\item $\forall x,y\in\mathfrak{C}: m_{xy}^s \neq 0 \implies s\in \mathcal{I}(x)\setminus \mathcal{I}(y)$.
	\item The matrices
	\[\omega(T_s)_{xy} := \begin{cases} -v^{-1}\cdot 1_k & \textrm{if}\;\;x=y, s\in \mathcal{I}(x) \\ v\cdot 1_k & \textrm{if}\;\;x=y, s\notin \mathcal{I}(x) \\ m_{xy}^s & \textrm{otherwise}\end{cases}\]
	induce a matrix representation $\omega: k[v^{\pm1}]H\to k[v^{\pm1}]^{\mathfrak{C}\times\mathfrak{C}}$.
\end{enumerate}
The associated directed graph is defined as follows: The vertex set is $\mathfrak{C}$ and there is a directed edge $x\leftarrow y$ if and only if $m_{xy}^s\neq 0$ for some $s\in S$. If this is the case, then the value $m_{xy}^s$ is called the \emph{weight} of the edge. The set $I(x)$ is called the \emph{vertex label} of $x$.
\end{definition}

Note that condition 1 and the definition of $\omega(T_s)$ already guarantees $\omega(T_s)^2=1+(v-v^{-1})\omega(T_s)$ so that the only non-trivial requirement in condition 2 is the braid relation $0=\Delta_{m_{st}}(\omega(T_s),\omega(T_t))$.

The definition seems to allow up to $|I(x)\setminus I(y)|$ different edge weights for a single edge $x\leftarrow y$. We will prove later that all values $m_{xy}^s$ with $s\in I(x)\setminus I(y)$ are in fact equal.

\bigbreak
Given a $W$-graph as above the matrix representation $\omega$ turns the space $k[v^{\pm 1}]^{\mathfrak{C}}$ of columns vectors indexed with $\mathfrak{C}$ with entries in $k[v^{\pm 1}]$ into a left module for the Hecke algebra $k[v^{\pm 1}]H$. It is natural to ask whether a converse is true. In situations where the Hecke algebra is split semisimple the answer is yes as shown by Gyoja:
\begin{theorem}[c.f. \cite{Gyoja}]
Let $K\subseteq\mathbb{C}$ be a splitting field for $W$. Every irreducible representation of $K(v)H$ can be realized as a $W$-graph module for some $W$-graph with edge weights in $K$.
\end{theorem}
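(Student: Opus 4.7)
Following the outline given in the introduction, I would prove this by constructing a $K(v)$-algebra retraction $\pi: K(v)\Omega \to K(v)H$ of the natural embedding $K(v)H \hookrightarrow K(v)\Omega$. Given such a $\pi$, any irreducible $K(v)H$-module $V$ pulls back along $\pi$ to a $K(v)\Omega$-module. Choosing a basis of $V$ adapted to the system of vertex idempotents $\xi_J$ of $\Omega$ then yields a $W$-graph realizing $V$: the basis vectors become the vertices, the label $I(x)$ of a vertex is read off from which $\xi_J$ fixes it, and the off-diagonal matrix entries of $T_s$ in this basis are the edge weights. The correspondence between $\Omega$-modules equipped with such a basis and $W$-graphs is essentially built into the construction of $\Omega$ (cf.\ the lemma referred to as \ref{lemma:Xi_path_alg} in the excerpt), so this translation step amounts to bookkeeping.

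\textbf{Building the retraction.} Because $K$ is a splitting field for $W$, the algebra $K(v)H$ is split semisimple and Wedderburn-isomorphic to $\prod_{\lambda \in \Irr(W)} K(v)^{d_\lambda \times d_\lambda}$. To construct $\pi$ it suffices to extend each irreducible representation $\rho_\lambda$ of $K(v)H$ to a representation $\tilde\rho_\lambda$ of $K(v)\Omega$ and bundle these extensions via the Wedderburn decomposition. Extending $\rho_\lambda$ amounts to exhibiting matrices for the vertex idempotents and edge elements of $\Omega$ that satisfy its defining relations and that recover $\rho_\lambda$ on the Hecke generators. Over the algebraic closure $\overline{K(v)}$ this can be arranged by simultaneously diagonalising the $\rho_\lambda(T_s)$ — each has spectrum contained in $\{v,-v^{-1}\}$ — and declaring $\xi_J$ to be the projection onto the joint eigenspace prescribed by $J$; the off-diagonal cross terms are then forced by the identity $\rho_\lambda(T_s) = -v^{-1}\sum_{J\ni s}\xi_J + v\sum_{J\not\ni s}\xi_J + (\text{edges})$ and become the images of the edge elements.

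\textbf{Main obstacle.} The serious difficulty is the field of definition: the construction above a priori lives over $\overline{K(v)}$, whereas the theorem demands edge weights in $K$ (not even just in $K(v)$). The eigenvalues $v$ and $-v^{-1}$ themselves lie in $K(v)$, but the transition matrices conjugating $\rho_\lambda$ into $W$-graph form need not be defined over $K(v)$, and even when they are, the resulting off-diagonal entries could depend on $v$ rather than landing in $K$. The descent to $K$ is where the hypothesis that $K$ is already a splitting field for $W$ becomes essential, and is the step I would expect to occupy most of Gyoja's original argument: specialise at $v = 1$, where $H$ degenerates to $KW$ and the analogous splitting problem is accessible via classical representation theory of finite groups, then deform the constructed retraction back to $K(v)\Omega$ using the rigidity of the quadratic relation $T_s^2 = 1 + (v-v^{-1})T_s$. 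That rigidity is precisely what forces the off-diagonal entries produced in the retraction to be $v$-independent and hence to live in $K$.
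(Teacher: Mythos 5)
First, a point of comparison: the paper does not prove this theorem at all --- it is quoted from Gyoja's article, and the only thing the paper supplies is the surrounding machinery (the algebra $\Omega$, the embedding $\iota$, and the dictionary between $k\Omega$-modules and $W$-graphs). Your high-level plan --- build a left inverse of $\iota$ and pull irreducible $H$-modules back to $\Omega$-modules --- is exactly the strategy the introduction attributes to Gyoja, and the translation from $\Omega$-modules to $W$-graphs is indeed the bookkeeping handled by the paper's module/$W$-graph correspondence. So the frame is right, but both concrete steps of your sketch have genuine gaps.

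The first gap is in the construction over $\overline{K(v)}$. You propose to "simultaneously diagonalise the $\rho_\lambda(T_s)$" and take $\xi_J$ to be joint eigenprojections, but the $\rho_\lambda(T_s)$ do not commute, so there are no joint eigenspaces; and even for a single $s$, the idempotent $e_s$ of $\Omega$ is \emph{not} the spectral projection of $\iota(T_s)$ onto the $-v^{-1}$-eigenspace (one has $\iota(T_s)e_s=-v^{-1}e_s$ but $e_s\iota(T_s)=-v^{-1}e_s+x_s$; the two are related by a unipotent correction). Producing a \emph{commuting} family of idempotents $e_s$ compatible with a given irreducible representation is the actual content of the theorem, not a preliminary; your sketch assumes it. The second gap is the descent step, where the mechanism you invoke is wrong: the quadratic relation $T_s^2=1+(v-v^{-1})T_s$ has no rigidity to offer, because (as the paper notes right after the definition of $W$-graphs) it is satisfied \emph{automatically} for any off-diagonal part $m^s$ with $e_sm^s=m^s$ and $m^se_s=0$, whether or not $m^s$ depends on $v$. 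The conditions that must be solved over $K$ (with $v$-independent matrices) are the braid relations read coefficient-wise in $v$ --- precisely the relations $y^\gamma(s,t)=0$ defining $\Omega$ --- and showing that this polynomial system has a solution over $K$ recovering $\rho_\lambda$ is where Gyoja's non-constructive argument does all its work. Neither your $v=1$ specialisation nor the claimed deformation is developed enough to close this.
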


\subsection{Gyoja's $W$-graph algebra}%\label{section:Omega}

\begin{definition}
Define $\Xi$ as the $\mathbb{Z}$-algebra that is freely generated by $e_s, x_s$ for $s\in S$ with respect to the following relations:
\begin{enumerate}
	\item $\forall s\in S: e_s^2=e_s$
	\item $\forall s,t\in S: e_s e_t = e_t e_s$
	\item $\forall s\in S: e_s x_s=x_s,\; x_s e_s = 0$.
\end{enumerate}

Furthermore define
\[\iota(T_s) := -v^{-1} e_s + v(1-e_s) + x_s \in \mathbb{Z}[v^{\pm1}]\Xi\]
for all $s\in S$. The braid commutator $\Delta_{m_{st}}(\iota(T_s),\iota(T_t))$ can be written as $\sum_{\gamma\in\mathbb{Z}} y^\gamma(s,t) v^\gamma$ with uniquely determined elements $y^\gamma(s,t)\in\Xi$.

The \emph{$W$-graph algebra $\Omega$} is defined as the $\IZ$-algebra obtained as the quotient of $\Xi$ modulo the relations $y^\gamma(s,t)=0$ for all $s,t\in S$ and all $\gamma\in\mathbb{Z}$.

By abuse of notation the quotient map $\Xi\to\Omega$ will not be explicitly mentioned for the remainder of this paper and symbols like $e_s$, $x_s$ and $\iota(T_s)$ will therefore be used for elements of $\Xi$ as well as the corresponding elements of $\Omega$.
\end{definition}

The definition, and in particular the observation $x_s^2=(e_s x_s)(e_s x_s) = 0$, immediately implies that $T_s\mapsto\iota(T_s)$ defines a homomorphism of $\IZ[v^{\pm1}]$-algebras $\iota: H\to\mathbb{Z}[v^{\pm1}]\Omega$ (which is in fact injective as we will prove in corollary \ref{corollary:H_Omega_injective}). This observation also appears in Gyoja's paper \citep[remark 2.4.3]{Gyoja}.

\subsection{Morphisms}

Giving an algebra by generators and relations means having a universal property for homomorphisms on the resulting algebra. Since the relations for $\Omega$ are not explicit enough to be verifiable by explicit calculations we will use the following universal property instead.
\begin{lemma}
Consider the category of all rings. Then pre-composing with the quotient $\Xi\to\Omega$ is a natural isomorphism
\[\Hom(\Omega, A) \isomorphic \Big\lbrace f:\Xi\to A \mid \begin{array}{l}
\text{The induced map }\IZ[v^{\pm1}]\Xi \to \IZ[v^{\pm1}]A \\
\text{annihilates } \Delta_{m_{st}}(\iota(T_s),\iota(T_t)) \text{ for all }s,t\in S\end{array} \Big\rbrace.\]
\end{lemma}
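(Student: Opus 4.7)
The plan is to reduce the claim to the standard universal property of a quotient ring, with one small observation about how Laurent polynomials decode into their coefficients.

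Since $\Omega$ is defined as $\Xi$ modulo the two-sided ideal $J$ generated by $\{y^\gamma(s,t)\mid s,t\in S,\gamma\in\IZ\}$, the universal property of the quotient ring gives immediately a natural bijection
\[\Hom(\Omega,A)\;\cong\;\Set{f\in\Hom(\Xi,A)\mid f(y^\gamma(s,t))=0\text{ for all }s,t\in S,\gamma\in\IZ},\]
and naturality in $A$ is automatic from functoriality of $\Hom(\Xi,-)$. The substance of the lemma is therefore to translate the condition \emph{``$f(y^\gamma(s,t))=0$ for all $\gamma$''} into the condition \emph{``the induced map $\tilde f\colon\IZ[v^{\pm 1}]\Xi\to\IZ[v^{\pm 1}]A$ annihilates every $\Delta_{m_{st}}(\iota(T_s),\iota(T_t))$''}.

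For this I would use that $\IZ[v^{\pm 1}]$ is free as a $\IZ$-module on the basis $(v^\gamma)_{\gamma\in\IZ}$. Since tensor products commute with direct sums, this gives $\IZ[v^{\pm 1}]A=\bigoplus_{\gamma}v^\gamma A$ as an $A$-module, and likewise for $\Xi$ in place of $A$. The map $\tilde f=\mathrm{id}\otimes f$ is $\IZ[v^{\pm 1}]$-linear, so it sends
\[\Delta_{m_{st}}(\iota(T_s),\iota(T_t))=\sum_{\gamma\in\IZ} v^\gamma y^\gamma(s,t)\quad\text{to}\quad \sum_{\gamma\in\IZ} v^\gamma f(y^\gamma(s,t)).\]
By uniqueness of coefficients in the direct sum decomposition, this element vanishes in $\IZ[v^{\pm 1}]A$ if and only if $f(y^\gamma(s,t))=0$ for every $\gamma\in\IZ$, which is exactly the condition coming from the quotient description of $\Omega$.

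There is no real obstacle here; the lemma is purely formal and boils down to packaging the defining relations of $\Omega$ (which individually live in $\Xi$) as the vanishing of finitely many braid commutators inside $\IZ[v^{\pm 1}]\Omega$. The only point that deserves an explicit line of justification is the freeness of $\IZ[v^{\pm 1}]A$ over $A$, without which the coefficient-wise reading of the braid commutator would not be well-defined.
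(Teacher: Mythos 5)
Your proposal is correct and follows essentially the same route as the paper: both reduce to the universal property of the quotient $\Xi\to\Omega$ and then identify the condition ``$f(y^\gamma(s,t))=0$ for all $\gamma$'' with the vanishing of the braid commutators in $\IZ[v^{\pm1}]A$ via coefficient-wise comparison. Your explicit remark that $\IZ[v^{\pm1}]A=\bigoplus_\gamma v^\gamma A$ is exactly the justification the paper uses (stated there as ``$\sum_\gamma a_\gamma v^\gamma=0$ iff all $a_\gamma=0$''), so there is nothing missing.
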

\begin{proof}
Pre-composing with the quotient map certainly is an injective natural transformation $\Hom(\Omega,-)\to\Hom(\Xi,-)$. We will prove that its image is exactly the subset of the claim.

Choose $s,t\in S$ and write $\Delta_{m_{st}}(\iota(T_s),\iota(T_t)) = \sum_{\gamma\in\IZ} y^\gamma(s,t) v^\gamma$ as before. Thus for any homomorphism $f: \Xi\to A$ the induced map $\IZ[v^{\pm1}]\Xi \to \IZ[v^{\pm1}]A$ satisfies
\[f(\Delta_{m_{st}}(\iota(T_s),\iota(T_t))) = \sum_{\gamma\in\IZ} f(y^\gamma(s,t)) v^\gamma.\]
Because an element $\sum_{\gamma} a_\gamma v^\gamma \in \IZ[v^{\pm1}]A$ with $a_\gamma\in A$ is zero if and only if $a_\gamma=0$ for all $\gamma\in\IZ$ the map $f$ descends to a well-defined homomorphism $\Omega\to A$ if and only if $f$ annihilates all $y^\gamma(s,t)$ if and only if the induced map annihilates all braid commutators $\Delta_{m_{st}}(\iota(T_s),\iota(T_t))$.
\end{proof}

The following easy corollary establishes symmetries of $\Omega$ which will be used to simplify the proofs of the decomposition conjecture in the last section of the paper.

\begin{corollary}

\begin{enumerate}
	\item If $\alpha:S\to S$ is a bijection with $\ord(\alpha(s)\alpha(t))=\ord(st)$ (in other words a graph automorphism of the Dynkin diagram of $(W,S)$) then there is a unique automorphism of $\Omega$ with $e_s\mapsto e_{\alpha(s)}$, $x_s\mapsto x_{\alpha(s)}$.
	\item There is a unique antiautomorphism $\delta$ of $\Omega$ with $e_s\mapsto 1-e_s$, $x_s\mapsto -x_s$.
\end{enumerate}
\end{corollary}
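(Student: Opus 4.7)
The plan is to apply the universal property from the preceding lemma in both parts, constructing a suitable ring homomorphism out of $\Xi$ and checking that it annihilates all braid commutators.

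For part 1, I would define $f\colon\Xi\to\Xi$ by $e_s\mapsto e_{\alpha(s)}$, $x_s\mapsto x_{\alpha(s)}$. Because $\alpha$ is a bijection of $S$, the three defining relations of $\Xi$ are preserved by permutation of indices, so $f$ is well-defined. A direct computation then gives $f(\iota(T_s))=\iota(T_{\alpha(s)})$, and since $\alpha$ preserves orders, the image of $\Delta_{m_{st}}(\iota(T_s),\iota(T_t))$ is again a braid commutator of the required shape and therefore zero in $\Omega$. The universal property lets $f$ descend to an endomorphism of $\Omega$, and applying the same recipe to $\alpha^{-1}$ produces a two-sided inverse on generators, proving that the result is an automorphism.

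For part 2, to produce an antiautomorphism $\delta\colon\Omega\to\Omega$, I would instead construct a ring homomorphism $g\colon\Xi\to\Omega^{\mathrm{op}}$ with $g(e_s)=1-e_s$ and $g(x_s)=-x_s$. The idempotence and commutativity relations are immediate because $1-e_s$ is again idempotent and the $1-e_s$ still commute pairwise. The subtle check is that the asymmetric relations $e_sx_s=x_s$ and $x_se_s=0$ swap roles when the multiplication is reversed: in $\Omega^{\mathrm{op}}$ the first becomes $(-x_s)(1-e_s)=-x_s$ (which follows from $x_se_s=0$) and the second becomes $(1-e_s)(-x_s)=0$ (which follows from $e_sx_s=x_s$). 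Then I would compute $g(\iota(T_s))=-v^{-1}(1-e_s)+ve_s-x_s$ and, using $T_s^{-1}=T_s-(v-v^{-1})$ in $H$, identify this element with $-\iota(T_s^{-1})$. Since inverting both sides of $\Delta_{m_{st}}(T_s,T_t)=0$ in $H$ yields $\Delta_{m_{st}}(T_s^{-1},T_t^{-1})=0$, and since negating both arguments of a braid commutator only rescales it by $(-1)^{m_{st}}$, we obtain $\Delta_{m_{st}}(g(\iota(T_s)),g(\iota(T_t)))=0$ in $\Omega$. The braid commutator computed in $\Omega^{\mathrm{op}}$ differs from the one in $\Omega$ only by an overall sign depending on the parity of $m_{st}$, so it also vanishes, and the universal property delivers the desired map $\delta$. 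Uniqueness is automatic since the $e_s,x_s$ generate $\Omega$, and $\delta^2$ is the identity on generators, so $\delta$ is an involution and in particular bijective.

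I expect the main obstacle to be the careful bookkeeping when passing between $\Omega$ and $\Omega^{\mathrm{op}}$: one has to recognise that the relevant condition is vanishing of the braid commutator in $\Omega^{\mathrm{op}}$, not in $\Omega$, and that this amounts to the same thing only because the two commutators agree up to an overall sign. Once that is settled, the remaining computations reduce to routine manipulations of the formula for $\iota(T_s)$ together with the quadratic and braid relations of $H$.
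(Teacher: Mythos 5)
Your proposal is correct and follows exactly the route the paper intends: the corollary is stated as an immediate consequence of the preceding universal-property lemma, and the paper omits the details you supply. Your verification of the $\Xi$-relations in $\Omega^{\mathrm{op}}$, the identification $g(\iota(T_s))=-\iota(T_s^{-1})$, and the sign bookkeeping for braid commutators under reversal and negation are all accurate, and the uniqueness/involution argument on generators is sound.
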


\subsection{Modules and $W$-graphs}

The following definition appears in Gyoja's paper (\citep[definition 2.5]{Gyoja}) although with different notation:
\begin{definition}
In $\Xi$ define the following elements for all $I,J\subseteq S, s\in S$:
\[E_I:=\Big(\prod_{t\in I}e_t\Big)\Big(\prod_{t\in S\setminus I}(1-e_t)\Big)\]
\[X_{IJ}^s:=E_I x_s E_J\]
\end{definition}

What Gyoja did not mention in his paper is that these elements actually give $\Omega$ the structure of a quotient of a path algebra. This is the content of the following lemma.
\begin{lemma}\label{lemma:Xi_path_alg}
With the above notation the following statements are true:
\begin{enumerate}
	\item $E_I E_J = \delta_{IJ} E_I$, $\sum_{I\subseteq S} E_I = 1$ and $e_s = \sum_{\substack{I\subseteq S \\ s\in I}} E_I$.
	\item $X_{IJ}^s = 0$ if $s\notin I\setminus J$ and $x_s = \sum_{\substack{I,J\subseteq S \\ s\in I\setminus J}} X_{IJ}^s$.
	\item $\Xi$ is isomorphic to the path algebra $\IZ\mathcal{Q}$ over the quiver $\mathcal{Q}$ whose vertex set is the power set of $S$ and that has exactly $|I\setminus J|$ edges $I\leftarrow J$ for every pair of vertices $I,J\subseteq S$.
\end{enumerate}
\end{lemma}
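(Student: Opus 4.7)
Part (1) is pure commutative-idempotent algebra inside the subring generated by the $e_s$. Relations 1 and 2 make $\{e_s, 1-e_s \mid s\in S\}$ a family of pairwise commuting idempotents with $e_s(1-e_s) = 0$ and $e_s + (1-e_s) = 1$, so the identity $E_I E_J = \delta_{IJ} E_I$ falls out by rearranging the product factor by factor over $s\in S$: each $s$ contributes $e_s^2 = e_s$, $(1-e_s)^2 = 1-e_s$, or a vanishing cross term. Expanding $1 = \prod_{s\in S}(e_s + (1-e_s))$ directly gives $\sum_I E_I = 1$, and multiplying this on the left by $e_s$, together with $e_s E_I = E_I$ if $s\in I$ and $0$ otherwise, produces $e_s = \sum_{s\in I} E_I$.

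For Part (2), relation 3 rewrites as $(1-e_s) x_s = 0$ and $x_s(1 - e_s) = x_s$. If $s \notin I$, factor $E_I = (1-e_s) E_I'$ where $E_I'$ involves only letters $e_t, 1-e_t$ with $t\neq s$; since $E_I'$ commutes with $(1-e_s)$, sliding the latter through $E_I'$ to meet $x_s$ yields $E_I x_s = 0$. Symmetrically, $x_s E_J = 0$ whenever $s\in J$. The formula $x_s = \sum_{s\in I\setminus J} X_{IJ}^s$ is then just $x_s = 1\cdot x_s\cdot 1 = \sum_{I,J} E_I x_s E_J$ with the vanishing terms discarded.

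For Part (3), the plan is to construct mutually inverse ring homomorphisms between $\IZ\mathcal{Q}$ and $\Xi$, which reduces the problem to a finite calculation on generators on each side. The map $\phi: \IZ\mathcal{Q}\to\Xi$ is defined by the universal property of the path algebra: send the vertex idempotent at $I$ to $E_I$ and each edge $I\leftarrow J$ labelled by some $s\in I\setminus J$ to $X_{IJ}^s$; this is well-defined since Parts (1) and (2) supply precisely the path-algebra relations in $\Xi$. In the other direction $\psi: \Xi\to\IZ\mathcal{Q}$ is defined via the universal property of $\Xi$ established in the preceding lemma: $e_s\mapsto\sum_{s\in I} v_I$ and $x_s\mapsto\sum_{s\in I\setminus J}\epsilon_{IJ}^s$, where $v_I$ and $\epsilon_{IJ}^s$ denote the vertex and edge generators of $\IZ\mathcal{Q}$. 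Only the relation $x_s e_s = 0$ in the target requires attention, and it holds because edges $\epsilon_{IJ}^s$ with $s\in I\setminus J$ end at vertices $J\not\ni s$.

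Surjectivity of $\phi$ is clear since its image already contains $e_s$ and $x_s$, but injectivity is the real content, and this is precisely what constructing $\psi$ as an explicit inverse is meant to circumvent. The composite $\phi\psi$ is the identity on $e_s, x_s$ by the very formulas of Parts (1) and (2). The main computational step is checking that $\psi\phi$ is the identity on $v_I$, for which one expands
\[\psi(E_I) = \prod_{t\in I}\Big(\sum_{t\in K} v_K\Big)\prod_{t\notin I}\Big(\sum_{t\notin K} v_K\Big);\]
pairwise orthogonality of the $v_K$ collapses the first product to $\sum_{K\supseteq I} v_K$ and the second to $\sum_{K\subseteq I} v_K$, so the whole product equals $v_I$. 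Then $\psi(X_{IJ}^s) = v_I \cdot \big(\sum_{s\in K\setminus L}\epsilon_{KL}^s\big) \cdot v_J = \epsilon_{IJ}^s$ follows immediately, completing the isomorphism.
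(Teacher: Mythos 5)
Your proposal is correct and follows the same route as the paper: parts (1) and (2) by direct manipulation of the commuting idempotents $e_s$, $1-e_s$ and expansion of $1=\prod_s(e_s+(1-e_s))$, and part (3) by constructing mutually inverse homomorphisms between $\IZ\mathcal{Q}$ and $\Xi$ via their respective presentations. Your verification that $\psi(E_I)=v_I$ and that the $\Xi$-relations hold in the path algebra just fills in details the paper leaves as "readily checked".
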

\begin{proof}
The first equation follows immediately from the definition, $e_s(1-e_s)=(1-e_s)e_s=0$ and the fact that the $e_s$ commute with each other. The decomposition of the identity follows by expanding $1=\prod_{s\in S} (e_s+(1-e_s))$, and the expression for $e_s$ follows by applying the decomposition of the identity in $e_s\cdot 1$.

The expression for $x_s$ follows by applying the decomposition of the identity twice in $1\cdot x_s \cdot 1$.

The path algebra $\mathbb{Z}\mathcal{Q}$ can be described as the algebra freely generated by $\Set{\tilde{E}_K, \tilde{X}_{IJ}^s | K,I,J\subseteq S, s\in I\setminus J}$ with respect to the relations
\[\tilde{E}_I \tilde{E}_J = \delta_{IJ} \tilde{E}_I,\quad\sum_{I\subseteq S} \tilde{E}_I = 1\quad\textrm{and}\quad \tilde{X}_{IJ}^s = \tilde{E}_I \tilde{X}_{IJ}^s \tilde{E}_J.\]
This implies that $\tilde{E}_I\mapsto E_I$, $\tilde{X}_{IJ}^s\mapsto X_{IJ}^s$ induces a ring homomorphism $\mathbb{Z}\mathcal{Q}\to\Xi$. Going in the other direction, one readily verifies that the unique ring homomorphism $\Xi\to\mathbb{Z}\mathcal{Q}$  with $e_s\mapsto\sum_{\substack{I\subseteq S \\ s\in I}} \tilde{E}_I$ and $x_s\mapsto\sum_{\substack{I,J\subseteq S \\ s\in I\setminus J}} \tilde{X}_{IJ}^s$ is inverse to the first morphism.
\end{proof}

\begin{remark}
For later use we observe that
\begin{enumerate}
	\item The algebra automorphismen induced by a graph automorphism $\alpha$ maps $E_I\mapsto E_{\alpha(I)}$ and $X_{IJ}^s \mapsto X_{\alpha(I)\alpha(J)}^{\alpha(s)}$.
	\item The antiautomorphism $\delta$ maps $E_I\mapsto E_{I^c}$ and $X_{IJ}^s \mapsto -X_{J^c I^c}^s$ where $I^c$ denotes the complement of $I$ in $S$.
\end{enumerate}
\end{remark}

The following theorem also appears in Gyoja's paper as a remark without proof and establishes the connection between $\Omega$ and $W$-graphs.

\begin{theorem}[c.f. {\citep[remark 2.7]{Gyoja}}]\label{theorem:Omega_modules_W_graphs}
Let $k$ be a commutative ring.
There is a correspondence between $\Omega$-modules and $W$-graphs by to choice of a suitable basis. More precisely the following statements hold:

\begin{enumerate}
	\item (From $W$-graphs to $\Omega$-modules)
	
Let $(\mathfrak{C},\mathcal{I},m)$ be a $W$-graph with edge weights in $k$. Define $\omega: k\Omega\to k^{\mathfrak{C}\times\mathfrak{C}}$ by
\[\omega(e_s)_{xy} := \begin{cases} 1 & x=y, s\in \mathcal{I}(x) \\ 0 & \text{otherwise} \end{cases} \quad\textrm{and}\quad \omega(x_s) := m^s.\]
Then $\omega$ is a well-defined $k$-algebra homomorphism such that the composition
\[k[v^{\pm1}]H \xrightarrow{\iota} k[v^{\pm1}]\Omega \xrightarrow{\omega} k[v^{\pm1}]^{\mathfrak{C}\times\mathfrak{C}}\]
is exactly the matrix representation of $H$ attached to $(\mathfrak{C},\mathcal{I},m)$.

	\item (From $\Omega$-modules to $W$-graphs)

Let $V$ be a $k\Omega$-module with representation $\omega: k\Omega\to\operatorname{End}_k(V)$. Define $V_I:=E_I V$ for all $I\subseteq S$.

If $V_I$ is a finitely generated free $k$-module and $\mathfrak{C}_I\subseteq V_I$ is a $k$-basis for all $I\subseteq S$, define $(\mathfrak{C},\mathcal{I},m)$ as follows: Set $\mathfrak{C}:=\bigcup_{I\subseteq S} \mathfrak{C}_I$, set $\mathcal{I}(x):=I$ for all $x\in\mathfrak{C}_I$ and define $m^s$ to be the matrix of $\omega(x_s)$ with respect to the basis $\mathfrak{C}$. With these definitions $(\mathfrak{C},\mathcal{I},m)$ is a $W$-graph and its $W$-graph module is $k[v^{\pm1}]\otimes_k V$.
\end{enumerate}
\end{theorem}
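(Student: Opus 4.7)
The plan is to reduce both directions to the universal property established in the lemma about homomorphisms out of $\Omega$, together with the explicit decomposition $\iota(T_s)=-v^{-1}e_s+v(1-e_s)+x_s$ and the path-algebra description of $\Xi$ provided by Lemma~\ref{lemma:Xi_path_alg}.

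For part~1, I would first check that the candidate matrices $\omega(e_s)$ and $\omega(x_s)=m^s$ satisfy the three defining relations of $\Xi$. The relations $e_s^2=e_s$ and $e_s e_t=e_t e_s$ are immediate because $\omega(e_s)$ is diagonal with entries in $\{0,1\}$, and the relations $e_s x_s=x_s$ and $x_s e_s=0$ follow directly from the support condition $m^s_{xy}\neq 0 \Rightarrow s\in I(x)\setminus I(y)$ that is built into the definition of a $W$-graph. This gives a homomorphism $\omega:\Xi\to k^{\mathfrak{C}\times\mathfrak{C}}$. A direct entry-by-entry comparison then shows that $\omega(\iota(T_s))$ is exactly the matrix $\omega(T_s)$ prescribed in the $W$-graph definition. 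Since these matrices satisfy the braid relations by hypothesis, the universal property promotes $\omega$ to a homomorphism $k\Omega\to k^{\mathfrak{C}\times\mathfrak{C}}$, and the last claim about the composition $\omega\circ\iota$ is automatic.

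For part~2, Lemma~\ref{lemma:Xi_path_alg} supplies the decomposition $V=\bigoplus_{J\subseteq S} V_J$ into the joint eigenspaces of the orthogonal idempotent system $(E_J)$, which is why choosing the $\mathfrak{C}_J$ produces a basis of $V$. The support condition for the candidate $W$-graph translates into $E_I\omega(x_s)E_J=0$ whenever $s\notin I$ or $s\in J$; the first case follows from $x_s=e_s x_s$ combined with $E_I e_s=0$ for $s\notin I$, and the second from $x_s e_s=0$ combined with $e_s E_J=E_J$ for $s\in J$, both identities being consequences of the first part of Lemma~\ref{lemma:Xi_path_alg}. Condition~2 of the $W$-graph definition comes for free, because by construction $\omega(e_s)$ is precisely the diagonal projector onto the span of $\{y\mid s\in I(y)\}$, so $\omega(\iota(T_s))$ is literally the matrix required of a $W$-graph representation, and its braid relations hold because $\omega\circ\iota$ is already a homomorphism of $k[v^{\pm1}]$-algebras.

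No step presents a real obstacle once the universal property is on the table; the proof is essentially an exercise in matching definitions. The only mildly delicate part is the dictionary between the small generators $e_s, x_s$ and the path-algebra generators $E_I, X^s_{IJ}$ that is needed in part~2, but this is pure bookkeeping supplied by the relations listed in Lemma~\ref{lemma:Xi_path_alg}.
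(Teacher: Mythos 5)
Your proposal is correct and follows the same route as the paper: verify the $\Xi$-relations for $\omega(e_s)$ and $\omega(x_s)=m^s$, observe that $\omega(\iota(T_s))$ coincides entry-by-entry with the matrices in the $W$-graph definition, and invoke the universal property of $\Omega$ (resp.\ Lemma \ref{lemma:Xi_path_alg} for the converse direction). You supply somewhat more detail than the paper's terse argument, but the content is identical.
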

\begin{proof}
1. The matrices $\omega(e_s)$ and $\omega(x_s)$ satisfy the relations of $\Xi$ by definition of $W$-graphs. We will therefore view $\omega$ as a algebra homomorphism $\Xi\to k^{\mathfrak{C}\times\mathfrak{C}}$. Because $\omega(\iota(T_s))$ is exactly equal to the matrices $\omega(T_s)$ in the definition of $W$-graphs and those matrices satisfy the braid relations, it follows that $\omega$ descends to a homomorphism $\Omega\to k^{\mathfrak{C}\times\mathfrak{C}}$ by the universal property.

2. The second assertion is easily verified: The condition $m_{xy}^s \neq 0 \implies s\in \mathcal{I}(x)\setminus \mathcal{I}(y)$ follows from $X_{IJ}^s \neq 0 \implies s\in I\setminus J$. The matrices occurring in the definition of $W$-graphs are exactly the matrices $\omega(\iota(T_s))$ and hence satisfy the necessary braid relations because the elements $\iota(T_s)\in\Omega$ satisfies them.
\end{proof}

\begin{corollary}\label{corollary:H_Omega_injective}
If $W$ is finite, then the following hold:
\begin{enumerate}
	\item $\iota: k[v^{\pm 1}]H\to k[v^{\pm 1}]\Omega$ is injective.
	\item All $E_I$ are non-zero as elements of $k\Omega$.
\end{enumerate}
\end{corollary}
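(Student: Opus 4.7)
\emph{Plan for (1).} The strategy is to reduce to a split semisimple situation by base change. Since $k[v^{\pm 1}]H$ is free over $k[v^{\pm1}]$ on the standard basis $\{T_w\}_{w\in W}$, it embeds into $\mathbb{C}(v)H$, so by naturality of $\iota$ it suffices to show that $\iota_{\mathbb{C}(v)}: \mathbb{C}(v)H \to \mathbb{C}(v)\Omega$ is injective. Now $\mathbb{C}(v)H$ is split semisimple, so for each $\lambda \in \Irr(W)$ the irreducible $V_\lambda$ appears as a factor via some representation $\rho_\lambda$. By Gyoja's realization theorem each $V_\lambda$ can be presented as a $W$-graph module, and the preceding theorem then produces a compatible $\mathbb{C}(v)\Omega$-action $\omega_\lambda$ with $\omega_\lambda \circ \iota_{\mathbb{C}(v)} = \rho_\lambda$. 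Taking the product over $\lambda$ and composing with $\iota_{\mathbb{C}(v)}$ yields exactly the Wedderburn isomorphism $\mathbb{C}(v)H \xrightarrow{\sim} \prod_\lambda \operatorname{End}_{\mathbb{C}(v)}(V_\lambda)$, which is in particular injective; hence so is $\iota_{\mathbb{C}(v)}$, and thus $\iota_k$.

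\emph{Plan for (2).} By the preceding theorem, it suffices for each $I \subseteq S$ to exhibit a single $W$-graph with edge weights in $k$ possessing at least one vertex of label $I$; the associated $\Omega$-action will then satisfy $\omega(E_I)\neq 0$, forcing $E_I \neq 0$ in $k\Omega$. The natural candidate is the Kazhdan-Lusztig $W$-graph on the regular representation of $H$: its vertices are indexed by $W$, its edge weights are the integer KL $\mu$-coefficients (hence lie in $k$), and the vertex corresponding to $w$ carries label equal to its left descent set $L(w)=\{s\in S : sw<w\}$. To see that every subset arises, take $w_I$ to be the longest element of the parabolic subgroup $W_I$: for $s\in I$ one has $s\in W_I$ and $\ell(sw_I)<\ell(w_I)$ by maximality, whereas for $s\notin I$ every reduced expression of $w_I$ uses only generators from $I$, forcing $\ell(sw_I)=\ell(w_I)+1$. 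Thus $L(w_I)=I$, as required.

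\emph{Main obstacle.} Part (1) is almost formal once one invokes Gyoja's realization theorem and the split semisimplicity of the generic Hecke algebra. The substantive step is (2): Gyoja's theorem a priori only provides $W$-graph realizations of the individual irreducibles without controlling which vertex labels occur, so the real input is the existence of a single $W$-graph whose vertex labels already exhaust $\mathcal{P}(S)$. Supplying this via the Kazhdan-Lusztig graph of the regular representation reduces the problem to the classical (but non-trivial) observation that every $I\subseteq S$ occurs as the left descent set of the longest element of the parabolic subgroup $W_I$.
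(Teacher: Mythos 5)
Your proposal is correct, but it splits into two arguments where the paper uses a single one. For part (2) you do essentially what the paper does: invoke the Kazhdan--Lusztig $W$-graph of the regular representation and observe that every $I\subseteq S$ occurs as a left descent set (the paper asserts this without naming $w_I$; your explicit use of the longest element of $W_I$ is a clean justification). For part (1), however, you take a genuinely different route: you base-change to $\mathbb{C}(v)$, invoke Gyoja's realization theorem for each irreducible, and identify the product of the resulting compositions $\omega_\lambda\circ\iota$ with the Wedderburn isomorphism of the split semisimple algebra $\mathbb{C}(v)H$. The paper instead gets injectivity for free from the same KL $W$-graph used in part (2): the induced composition $k[v^{\pm1}]H\xrightarrow{\iota}k[v^{\pm1}]\Omega\to k[v^{\pm1}]^{W\times W}$ is the left regular representation $h\mapsto(x\mapsto hx)$, which is faithful, so $\iota$ is injective directly over $k[v^{\pm1}]$ with no base change and no appeal to semisimplicity or to Gyoja's (nonconstructive) theorem. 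Your argument is valid --- the reduction to $\mathbb{C}(v)$ works because $H$ is $k[v^{\pm1}]$-free on $\{T_w\}$, so the composite $k[v^{\pm1}]H\hookrightarrow\mathbb{C}(v)H\to\mathbb{C}(v)\Omega$ factoring through $k[v^{\pm1}]\Omega$ forces injectivity of $\iota_k$ --- but it is strictly heavier machinery; the paper's choice of the regular $W$-graph makes one elementary construction serve both claims simultaneously.
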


In particular $H$ will be considered as a subalgebra of the scalar extension $\IZ[v^{\pm1}]\Omega$ for the rest of this paper.

\begin{proof}
Consider the Kazhdan-Lusztig-$W$-graph as defined in \cite{KL}. It is a $W$-graph $(\mathfrak{C},\mathcal{I},m)$  with $\mathfrak{C}:=W$, $\mathcal{I}(w):=\left\lbrace s\in S \mid sw<w \right\rbrace$ and integer edge weights such that the associated $W$-graph module is the regular $H$-module. This can be considered as a $W$-graph with edge weights in $k$.

The representation $k[v^{\pm 1}]H \xrightarrow{\iota} k[v^{\pm 1}]\Omega \to k[v^{\pm 1}]^{W\times W}$ induced by this $W$-graph equals the map $k[v^{\pm 1}]H\to\operatorname{End}_{k[v^{\pm 1}]}(k[v^{\pm 1}]H), h\mapsto(x\mapsto hx)$. The latter map is injective so that $\iota: k[v^{\pm 1}]H\to k[v^{\pm 1}]\Omega$ is injective too.

If $W$ is finite, then all the elements $E_I\in k\Omega$ are nonzero because there are $w\in\mathfrak{C}$ with $\mathcal{I}(w)=I$ (for example the longest elements of the corresponding parabolic subgroup $W_I$).
\end{proof}

\begin{remark}
The finiteness condition is in fact superfluous. A more carefully phrased version of the definition of $W$-graphs and of theorem \ref{theorem:Omega_modules_W_graphs} that also includes the infinite-dimensional case makes the same proof work for the first statement. The second statement however cannot be proved in the same way because there is an element $w\in W$ with $\mathcal{I}(w)=I$ if and only if $W_I$ is finite so that this proof doesn't work for infinite Coxeter groups (contrary to what I believed when I wrote my thesis which contains the special proof for the general statement). An alternative general proof of the second statement will be contained in my next paper \cite{Hahn2014canonical}.
\end{remark}

\section{$\Omega$ as a quotient of a path algebra}\label{section:relations}

It was already observed in Lemma \ref{lemma:Xi_path_alg} that $\Xi$ is a path algebra. In this section we will give an explicit set of relations for the quotient $\Xi\to\Omega$ in terms of this path algebra structure. The proof is inspired by equations appearing in Stembridge's paper \cite{Stembridge2008admissble}.

We will need the following lemma which is a slight generalization of \citep[Prop.\,3.1]{Stembridge2008admissble}.
\begin{lemma}\label{wgraph_alg:braid_commutator}
Define polynomials $\tau_r\in\IZ[T]$ by the following recursion:
\[\tau_{-1} :=0,\quad \tau_0 := 1, \quad \tau_r := T\tau_{r-1}-\tau_{r-2}\]
With this notation the following holds:

If $R$ is any ring and $x, y\in R$ are solutions of the equation $T^2 = 1+\zeta T$ for some fixed $\zeta\in R$, then their braid commutators satisfy
\[\Delta_{r+1}(x,y) = (-1)^r\tau_r(x+y-\zeta)\cdot(x-y).\]
\end{lemma}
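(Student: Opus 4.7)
The plan is to prove the identity by induction on $r$, after first establishing a three-term recursion for the braid commutators themselves which mirrors the recursion defining $\tau_r$.

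First I would check the base cases $r=0$ and $r=1$ directly. For $r=0$ one has $\Delta_1(x,y)=x-y$ matching $\tau_0(x+y-\zeta)(x-y)=(x-y)$. For $r=1$, $\Delta_2(x,y) = xy-yx$, while the right-hand side $-(x+y-\zeta)(x-y) = -(x^2+yx-xy-y^2-\zeta x+\zeta y)$ simplifies to $xy-yx$ after substituting $x^2 = 1+\zeta x$ and $y^2 = 1+\zeta y$.

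The key step is to derive the recursion
\[\Delta_{r+1}(x,y) + \Delta_{r-1}(x,y) = -(x+y-\zeta)\,\Delta_r(x,y).\]
Let $a_k := xyxy\cdots$ and $b_k := yxyx\cdots$ denote the two alternating words of length $k$, so $\Delta_k = a_k - b_k$. From the recursive structure $a_{k+1}=xb_k$, $b_{k+1}=ya_k$ one obtains
\[xa_r = x(xb_{r-1}) = (1+\zeta x)b_{r-1} = b_{r-1} + \zeta a_r,\]
and symmetrically $yb_r = a_{r-1} + \zeta b_r$. Expanding $(x+y)\Delta_r = xa_r + ya_r - xb_r - yb_r$ and substituting these identities together with $xb_r = a_{r+1}$, $ya_r=b_{r+1}$ gives the claimed recursion after rearrangement.

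With the recursion in hand, the induction step is immediate: assuming the formula holds for $r$ and $r-1$, one computes
\[-\Delta_{r+1}(x,y) = (x+y-\zeta)\Delta_r(x,y) + \Delta_{r-1}(x,y)\]
\[= (-1)^{r-1}\bigl[(x+y-\zeta)\tau_{r-1}(x+y-\zeta) - \tau_{r-2}(x+y-\zeta)\bigr](x-y)\]
\[= (-1)^{r-1}\tau_r(x+y-\zeta)(x-y),\]
where the last equality uses the defining recursion $\tau_r = T\tau_{r-1}-\tau_{r-2}$. Multiplying by $-1$ yields the claim for $r+1$. The main obstacle is the bookkeeping in the derivation of the three-term recursion for $\Delta_r$; once that step is clean, everything else is a straightforward induction matching the $\tau_r$ recursion term by term, and the noncommutativity of $x$ and $y$ causes no issue because all manipulations keep the factor $(x-y)$ on the right throughout.
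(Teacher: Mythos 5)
Your proof is correct and takes essentially the same route as the paper: multiply the braid commutator by $x+y$, use the quadratic relations $x^2=1+\zeta x$, $y^2=1+\zeta y$ to obtain the three-term recursion $\Delta_{r+1}(x,y)=-(x+y-\zeta)\Delta_r(x,y)-\Delta_{r-1}(x,y)$, and then induct against the defining recursion of $\tau_r$. (Your sign on the $\Delta_{r-1}$ term is the correct one -- the displayed implication at the end of the paper's derivation has a sign typo, writing $-\Delta_r$ where $+\Delta_r$ is needed inside the parenthesis -- so your version is, if anything, the cleaner one.)
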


Observe that $\tau_r$ is a monic polynomial of degree $r$ for all $r\in\IN$. In particular $\left\lbrace{\tau_0,\ldots,\tau_r}\right\rbrace$ is a $\IZ$-basis of $\left\lbrace{f\in\IZ[T] \mid \deg(f)\leq r}\right\rbrace$. Furthermore $\tau_r$ is an even polynomial for even $r$ and an odd polynomial for odd $r$, i.e. $\tau_r(-T) = (-1)^r \tau_r(T)$. This follows immediately from the recursion.

\begin{proof}
The claim for the braid commutator is true for $r=-1$ and $r=0$. Furthermore the following holds:
\begin{align*}
	(x+y)\Delta_{r+1}(x,y) &= x^2 \underbrace{y x \ldots}_{r} - \underbrace{x y x \ldots}_{r+2} + \underbrace{y x y\ldots}_{r+2} - y^2 \underbrace{x y\ldots}_{r}  \\
	&= (1+\zeta x) \underbrace{y x \ldots}_{r} - \underbrace{x y x \ldots}_{r+2} + \underbrace{y x y\ldots}_{r+2} - (1+\zeta y) \underbrace{x y\ldots}_{r}  \\
	&= 1\cdot\underbrace{y x \ldots}_{r} - 1\cdot\underbrace{x y \ldots}_{r} + \zeta x \underbrace{y x \ldots}_{r} - \zeta y \underbrace{x y \ldots}_{r} \\
	&\hspace{1em} -\big(\underbrace{x y x \ldots}_{r+2} - \underbrace{y x y\ldots}_{r+2}\big) \\
	&= -\Delta_r(x,y) + \zeta\Delta_{r+1}(x,y) - \Delta_{r+2}(x,y) \\
	\implies \Delta_{r+2}(x,y) &= (-1)\big((x+y-\zeta)\Delta_{r+1}(x,y) + \Delta_r(x,y)\big)
\end{align*}
The claim follows by induction.
\end{proof}

\begin{theorem}\label{theorem:Omega_relations}
For all $I,J\subseteq S$, $s,t\in S$ and $r\in\IN$ define
\[P_{IJ}^r(s,t) :=  E_I \underbrace{x_s x_t x_s\ldots}_{r\,\textrm{factors}} E_J = \begin{cases} 0 & r=0, I\neq J \\ E_I & r=0, I=J \\ \sum\limits_{I_1,\ldots,I_{r-1}\subseteq S} X_{I I_1}^s X_{I_1 I_2}^t X_{I_2 I_3}^s \ldots X_{I_{r-1} J}^s & r>0, 2\nmid r \\ \sum\limits_{I_1,\ldots,I_{r-1}\subseteq S} X_{I I_1}^s X_{I_1 I_2}^t X_{I_2 I_3}^s \ldots X_{I_{r-1} J}^t & r>0, 2\mid r \end{cases}.\]

With this notation the kernel of the quotient $\Xi\to\Omega$ is generated by the following elements:
\begin{enumerate}
	\item[$(\alpha)$] For all $s,t\in S$ the elements
	\[P_{IJ}^{m-1}(s,t) + a_{m-2} P_{IJ}^{m-2}(s,t) + \ldots + a_1 P_{IJ}^1(s,t) + a_0 P_{IJ}^0(s,t)\]
	for all $I,J\subseteq S$ where either
	\begin{itemize}
		\item $s\in I$, $t\notin I$, $s\in J$, $t\notin J$ and $2\nmid m_{st}$ or
		\item $s\in I$, $t\notin I$, $s\notin J$, $t\in J$ and $2\mid m_{st}$
	\end{itemize}  holds. The $a_i$ denote the coefficients of the polynomial $\tau_{m-1}$, i.e.
	\[\tau_{m-1}(T) = T^{m-1} + a_{m-2} T^{m-2} + \ldots + a_1 T + a_0.\]
	\item[$(\beta)$] For all $s,t\in S$ and all $I,J\subseteq S$ with $s,t\in I\setminus J$ the elements
	\[P_{IJ}^1(s,t) - P_{IJ}^1(t,s), P_{IJ}^2(s,t) - P_{IJ}^2(t,s), \ldots, P_{IJ}^m(s,t) - P_{IJ}^m(t,s).\]
\end{enumerate}
\end{theorem}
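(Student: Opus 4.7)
The plan is to compute $E_I\Delta_{m_{st}}(\iota(T_s),\iota(T_t))E_J$ explicitly as a Laurent polynomial in $v$ with coefficients in the path algebra $\Xi$ and identify the resulting kernel-generators with the families $(\alpha)$ and $(\beta)$.

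The first step applies Lemma \ref{wgraph_alg:braid_commutator} with $\zeta = v-v^{-1}$, yielding
\[\Delta_m(\iota(T_s),\iota(T_t)) = (-1)^{m-1}\tau_{m-1}(u)\bigl(\iota(T_s)-\iota(T_t)\bigr),\]
where a direct calculation (using $\iota(T_r) = v - (v+v^{-1})e_r + x_r$) gives
\[u = (v+v^{-1})(1-e_s-e_t)+(x_s+x_t), \qquad \iota(T_s)-\iota(T_t) = -(v+v^{-1})(e_s-e_t)+(x_s-x_t).\]
Next I would sandwich with $E_I, E_J$ and insert $1=\sum_K E_K$ between every pair of adjacent factors. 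Using $E_K e_r = E_K$ for $r\in K$ and $E_K e_r = 0$ otherwise, each $E_K u E_L$ evaluates to a scalar multiple of $E_K$ (a factor of $(v+v^{-1})$ times a sign determined by $|K\cap\{s,t\}|$) when $K=L$, and to $X_{KL}^s+X_{KL}^t$ when $K\neq L$; similarly $E_K(\iota(T_s)-\iota(T_t))E_L$ simplifies to $X_{KL}^s-X_{KL}^t$ off the diagonal and to a scalar multiple of $E_K$ on it. After expansion, $E_I\Delta_m E_J$ becomes a finite sum of quiver-paths from $J$ to $I$ weighted by Laurent polynomials in $v$.

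The third step identifies the coefficients of $v^\gamma$ with the listed generators. The $v^0$-coefficient of $\Delta_m$ is the pure-$x$ alternating product $\underbrace{x_sx_t\cdots}_m-\underbrace{x_tx_s\cdots}_m$ (each factor $\iota(T_r)$ contributes its $v^0$-part $x_r$), which sandwiches directly to $P_{IJ}^m(s,t)-P_{IJ}^m(t,s)$, giving the $(\beta)$-element for $r=m$. The lower $(\beta)$-elements with $r<m$ emerge from other $v^\gamma$-coefficients after applying the path-algebra identity $x_s^2=0$ (which holds in $\Xi$ itself: two consecutive $s$-labeled edges would require both $s\in J$ and $s\notin J$ at the intermediate vertex). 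For the $(\alpha)$-family the crucial observation is that the parity conditions of the theorem single out precisely those pairs $(I,J)$ where $|I\cap\{s,t\}|=1=|J\cap\{s,t\}|$ with the appropriate sign-matching, so that the ``diagonal'' $u$-contributions at the end idempotents vanish and the $\tau_{m-1}$-polynomial structure of $\Delta_m$ transfers verbatim to a linear combination of the $P_{IJ}^r(s,t)$, producing exactly the stated relation.

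The main obstacle is to verify that all coefficients of $E_I\Delta_m E_J$ for pairs $(I,J)$ outside the listed patterns lie in the two-sided ideal generated by $(\alpha)$ and $(\beta)$. This requires an exhaustive case-analysis by the membership of $s,t$ in $I$ and $J$, expressing each ``extra'' coefficient as a product $a\cdot r\cdot b$ with $r$ an $(\alpha)$- or $(\beta)$-generator (typically associated to a larger pair $(I',J')$) and $a,b$ path-algebra elements; the Dynkin-graph-automorphism and antiautomorphism symmetries from the preceding corollary can be used to reduce the number of subcases. The converse direction — that $(\alpha)$ and $(\beta)$ do lie in the kernel — follows immediately from the same explicit expansion, since each listed element is manifestly a specific coefficient of $E_I\Delta_m E_J$ for appropriate $I,J$.
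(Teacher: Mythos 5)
Your outline follows the same skeleton as the paper's proof: factor $\Delta_m(\iota(T_s),\iota(T_t))=(-1)^{m-1}\tau_{m-1}(u)\bigl(\iota(T_s)-\iota(T_t)\bigr)$ via Lemma \ref{wgraph_alg:braid_commutator} and then expand in the path-algebra basis of $\Xi$ to read off coefficients of $v^\gamma$. However, there is a genuine gap, and also a concrete error in the one place where you commit to an explicit coefficient identification. The $v^0$-coefficient of $\Delta_m$ is \emph{not} the pure alternating product $x_sx_t\cdots - x_tx_s\cdots$ once $m\geq 3$: writing each factor as $v(1-e_r)-v^{-1}e_r+x_r$, a factor contributing $v(1-e_r)$ can be paired with another contributing $-v^{-1}e_{r'}$ to yield total degree $0$, and these cross terms do not cancel in $\Xi$. (In the paper's derivation the $(\beta)$-elements with $2\le r\le m$ in fact come from the corner block $CA^{r-2}JB$ of the braid commutator, and the $r=1$ element from $D_1-D_2$, not from the $v^0$-coefficient or from applying $x_s^2=0$ to other coefficients.)

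The more serious issue is that the hard half of the theorem — that the listed elements \emph{generate} the kernel, i.e.\ that every coefficient $y^\gamma(s,t)$ for every $\gamma$ and every pair $(I,J)$ lies in the ideal generated by $(\alpha)$ and $(\beta)$ — is deferred to ``an exhaustive case-analysis'' producing factorizations $a\cdot r\cdot b$, with no mechanism given for finding them and no argument that such factorizations exist coefficient by coefficient. This is precisely where the paper does its real work: it writes $\iota(T_s),\iota(T_t)$ as $4\times 4$ block matrices with respect to the decomposition of $\Xi$ by $I\cap\{s,t\}$, computes $\tau_{m-1}(E)F$ in closed block form, and then runs a leading-coefficient induction in $z=v+v^{-1}$. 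Because $\{\tau_0,\dots,\tau_{m-2}\}$ is a $\IZ$-basis of the polynomials of degree $\leq m-2$, successively peeling off highest $z$-coefficients shows that the ideal generated by all the $v^\gamma$-coefficients equals the ideal generated by $\tau_{m-1}(A)$, $D_1-D_2$ and $CA^kJB$ for $0\le k\le m-2$, and these then sandwich with $E_I,\,E_J$ to give exactly $(\alpha)$ and $(\beta)$. Without some substitute for this triangularization step, your plan establishes only that the listed elements lie in the kernel, not that they generate it.
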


These relations will be used throughout the rest of the paper. We will refer to them as the $(\alpha^{st})$-relation and $(\beta^{st})$-relations respectively.

\begin{proof}
Consider $V:=\IZ[v^{\pm1}]\Xi$ and fixed $s,t\in S$. Define the four subspaces
\[V_{00} := \bigoplus_{\substack{I\subseteq S \\ s\notin I,t\notin I}} \!\!V E_I,\quad
  V_{01} := \bigoplus_{\substack{I\subseteq S \\ s\in I,t\notin I}} \!\!V E_I,\quad
  V_{10} := \bigoplus_{\substack{I\subseteq S \\ s\notin I,t\in I}} \!\!V E_I,\quad
  V_{11} := \bigoplus_{\substack{I\subseteq S \\ s\in I, t\in I}} \!\!V E_I.\]

Note that given an algebra $A$ and a decomposition into pairwise orthogonal idempotents $1_A=\sum_{i=1}^n e_i$ every element $a\in A$ can be uniquely written as $a=\sum_{i,j} a_{ij}$ with $a_{ij}\in e_i A e_j$ and this additive decomposition behaves like matrices behave with respect to multiplication, i.e. $(ab)_{ik} = \sum_j a_{ij} b_{jk}$.

We will therefore write elements of $\IZ[v^{\pm 1}]\Xi$ as matrices when we want to display such a decomposition in an efficient way. Note that one can view these matrices equivalently either as $d\times d$-matrices with entries in the Laurent polynomial ring $\IZ[v^{\pm1}]\Xi$ and as Laurent polynomials over the matrix ring $\Xi^{d\times d}$. In other words $\IZ[v^{\pm1}]\otimes (\Xi^{d\times d}) = (\IZ[v^{\pm 1}]\otimes \Xi)^{d\times d}$. It is therefore sensible to speak of the coefficient of $v^k$ of a matrix.

\medbreak
The matrices of $\iota(T_s)=-v^{-1}e_s + x_s + v(1-e_s)$ and $\iota(T_t)$ are given by
\begin{align*}
\iota(T_s) &=
-v^{-1}\begin{pmatrix}0&0&0&0\\0&1&0&0\\0&0&0&0\\0&0&0&1\end{pmatrix}
+\begin{pmatrix}0&0&0&0\\B_1&0&A_1&0\\0&0&0&0\\D_1&0&C_1&0\end{pmatrix}
+v\begin{pmatrix}1&0&0&0\\0&0&0&0\\0&0&1&0\\0&0&0&0\end{pmatrix} \\
&=\begin{pmatrix}
v & 0 & 0 & 0 \\
B_1 & -v^{-1} & A_1 & 0 \\
0 & 0 & v & 0 \\
D_1 & 0 & C_1 & -v^{-1}
\end{pmatrix}
\intertext{and}
\iota(T_t)&=
-v^{-1}\begin{pmatrix}0&0&0&0\\0&0&0&0\\0&0&1&0\\0&0&0&1\end{pmatrix}
+\begin{pmatrix}0&0&0&0\\0&0&0&0\\B_2&A_2&0&0\\D_2&C_2&0&0\end{pmatrix}
+v\begin{pmatrix}1&0&0&0\\0&1&0&0\\0&0&0&0\\0&0&0&0\end{pmatrix} \\
&=\begin{pmatrix}
v & 0 & 0 & 0 \\
0 & v & 0 & 0 \\
B_2 & A_2 & -v^{-1} & 0 \\
D_2 & C_2 & 0 & -v^{-1}
\end{pmatrix}
\end{align*}
respectively, where
\[A_1 = \sum_{\substack{I,J\subseteq S \\ s\in I, s\notin J \\ t\notin I, t\in J}} X_{IJ}^s\quad\textrm{and}\quad
  A_2 = \sum_{\substack{I,J\subseteq S \\ s\notin I, s\in J \\ t\in I, t\notin J}} X_{IJ}^t,\]
\[B_1 = \sum_{\substack{I,J\subseteq S \\ s\in I, s\notin J \\ t\notin I, t\notin J}} X_{IJ}^s\quad\textrm{and}\quad
  B_2 = \sum_{\substack{I,J\subseteq S \\ s\notin I, s\notin J \\ t\in I, t\notin J}} X_{IJ}^t,\]
\[C_1 = \sum_{\substack{I,J\subseteq S \\ s\in I, s\notin J \\ t\in I, t\in J}} X_{IJ}^s\quad\textrm{and}\quad
  C_2 = \sum_{\substack{I,J\subseteq S \\ s\in I, s\in J \\ t\in I, t\notin J}} X_{IJ}^t\quad\textrm{as well as}\]
\[D_1 = \sum_{\substack{I,J\subseteq S \\ s\in I, s\notin J \\ t\in I, t\notin J}} X_{IJ}^s\quad\textrm{and}\quad
  D_2 = \sum_{\substack{I,J\subseteq S \\ s\in I, s\notin J \\ t\in I, t\notin J}} X_{IJ}^t.\]
Finally define $z$ to be $v+v^{-1}$.

\medbreak
Step 1: We claim that for all $r\in\IN$
\begin{equation}
\Delta_{r+1}(\iota(T_s),\iota(T_t)) = (-1)^r\begin{pmatrix}
0 & 0 & 0 \\
\tau_r(A) JB & \tau_r(A)J(A-z) & 0 \\
X_r &  -C\tau_r(A)J & 0 
\end{pmatrix}
\label{eq:wgraph_alg:braid_commutator2}\tag{$\ast$}
\end{equation}
holds where
\[A:=\begin{pmatrix} 0 & A_1 \\ A_2 & 0 \end{pmatrix},\enspace B:=\begin{pmatrix} B_1 \\ B_2 \end{pmatrix},\enspace C:=\begin{pmatrix} C_2 & C_1 \end{pmatrix},\enspace J:=\begin{pmatrix} 1 & 0 \\ 0 & -1 \end{pmatrix} \enspace\textrm{and}\]
\[X_r := \sum_{i=0}^{r-1} (-1)^iC\tau_i(z)\tau_{r-1-i}(A)JB+(-1)^r\tau_r(z)(D_1-D_2).\]

In order to prove this claim define
\begin{align*}
	E &:= \iota(T_s)+\iota(T_t)-(v-v^{-1}) = \begin{pmatrix}
		z & 0 & 0 \\
		B & A & 0 \\
		D_1+D_2 & C & -z
	\end{pmatrix} \quad\textrm{and} \\
	F &:= \iota(T_s)-\iota(T_t) = \begin{pmatrix}
		0 & 0 & 0 \\
		JB & J(A-z) & 0 \\
		D_1-D_2 & -CJ & 0 
	\end{pmatrix}.
\end{align*}
By Lemma \ref{wgraph_alg:braid_commutator} $\Delta_{r+1}(\iota(T_s),\iota(T_t))=(-1)^r \tau_r(E)F$. Therefore we will inductively show that $\tau_r(E)F$ equals the matrix in \eqref{eq:wgraph_alg:braid_commutator2}. For $r=-1$ and $r=0$ this is clear. The induction step follows from
\begin{align*}
	\tau_{r+1}(E)F &= E\tau_r(E)F - \tau_{r-1}(E)F \\
	&= \begin{pmatrix}
	z & 0 & 0 \\ B & A & 0 \\ D_1+D_2 & C & -z
	\end{pmatrix}\cdot\begin{pmatrix}
	0&0&0 \\ \tau_r(A)JB & \tau_r(A)J(A-z) & 0 \\ X_r & -C\tau_r(A)J & 0
	\end{pmatrix} \\
	&\phantom{\textrm{= }} - \begin{pmatrix}
	0&0&0 \\ \tau_{r-1}(A)JB & \tau_{r-1}(A)J(A-z) & 0 \\ X_{r-1} & -C\tau_{r-1}(A)J & 0
	\end{pmatrix} \\
	&=\begin{pmatrix}
	0 & 0 & 0 \\ A\tau_r(A)JB-\tau_{r-1}(A)JB & H & 0 \\
	L & K & 0
	\end{pmatrix}
\end{align*}
where we used the abbreviations
\begin{align*}
	H &:= A\tau_r(A)J(A-z)-\tau_{r-1}(A)J(A-z) \quad\textrm{and} \\
	K &:= C\tau_r(A)J(A-z)+zC\tau_r(A)J+C\tau_{r-1}(A)J \\
	L &:= C\tau_r(A)JB-zX_r-X_{r-1}
\end{align*}
At the positions $(2,1)$ and $(2,2)$ the term is clearly equal to the desired result. At position $(3,2)$ we use $JA=-AJ$ and simplify the expression as follows
\begin{align*}
	K &= C\tau_r(A)JA-C\tau_r(A)Jz+zC\tau_r(A)J+C\tau_{r-1}(A)J \\
	&= -C\tau_r(A)AJ+C\tau_{r-1}(A)J \\
	&= -C\tau_{r+1}(A)J.
\end{align*}
Using the recursive definition of $\tau_{r+1}$ it is also a routine calculation to show that 
%\begin{align*}
	%L &= C\tau_r(A)JB-z\sum_{i=0}^{r-1}(-1)^iC\tau_i(z)\tau_{r-1-i}(A)JB-(-1)^r z\tau_r(z)(D_1-D_2) \\
	%&\phantom{\textrm{= }} -\sum_{i=0}^{r-2}(-1)^iC\tau_i(z)\tau_{r-2-i}(A)JB-(-1)^{r-1}\tau_{r-1}(z)(D_1-D_2) \\
	%&= C\tau_r(A)JB \\
	%&\phantom{\textrm{= }} +C\Big(\sum_{i=0}^{r-1}(-1)^{i+1}z\tau_i(z)\tau_{r-1-i}(A)-\sum_{i=1}^{r-1}(-1)^{i-1}\tau_{i-1}(z)\tau_{r-1-i}(A)\Big)JB \\
	%&\phantom{\textrm{= }} +(-1)^{r+1}(z\tau_r(z)-\tau_{r-1}(z))(D_1-D_2) \\
	%&= C\tau_r(A)JB + C\Big(\sum_{i=0}^{r-1}(-1)^{i+1}(z\tau_i(z)-\tau_{i-1}(z))\tau_{r-1-i}(A)\Big)JB \\
	%&\phantom{\textrm{= }} +(-1)^{r+1}\tau_{r+1}(z)(D_1-D_2) \\
	%&= (-1)^0C\tau_0(z)\tau_r(A)JB+C\Big(\sum_{i=0}^{r-1}(-1)^{i+1}\tau_{i+1}(z)\tau_{r-1-i}(A)\Big)JB \\
	%&\phantom{\textrm{= }} +(-1)^{r+1}\tau_{r+1}(z)(D_1-D_2) \\
	%&= \sum_{i=0}^r (-1)^i C\tau_i(z)\tau_{r-i}(A)JB+(-1)^{r+1}\tau_{r+1}(z)(D_1-D_2)
%\end{align*}
\begin{align*}
	L &= \sum_{i=0}^r (-1)^i C\tau_i(z)\tau_{r-i}(A)JB+(-1)^{r+1}\tau_{r+1}(z)(D_1-D_2)
\end{align*}

This shows \eqref{eq:wgraph_alg:braid_commutator2}.

\bigbreak
Step 2: Simplify the result

Now let $\mathfrak{K}=\ker(\Xi\to\Omega)$. By definition this ideal is generated by the coefficients of the $v^\gamma$ in $\Delta_m(\iota(T_s),\iota(T_t))\in\IZ[v^{\pm 1}]\Xi$. Therefore we have consider the coefficients of
\begin{enumerate}
	\item $R_1:=\tau_{m-1}(A)JB$,
	\item $R_2:=\tau_{m-1}(A)J(A-z)$,
	\item $R_3:=\sum_{i=0}^{m-2} (-1)^iC\tau_i(z)\tau_{m-2-i}(A)JB+(-1)^{m-1}\tau_{m-1}(z)(D_1-D_2)$ and
	\item $R_4:=C\tau_{m-1}(A)J$.
\end{enumerate}
The coefficient of the highest power of $v$ in $R_2$ is $-\tau_{m-1}(A)J$ because $z=v+v^{-1}$ so that the coefficient of the highest power of $z$ is also the coefficient of the highest power of $v$ in any Laurent polynomial. Now $R_2$ is contained in $\mathfrak{K}[v^{\pm1}]$  (remember that we view these matrices as elements of $\IZ[v^{\pm1}]\Xi$ so that this makes sense) if and only if $\tau_{m-1}(A)\in\mathfrak{K}$ because $J$ is invertible. Conversely $R_1$, $R_2$ and $R_4$ are in $\mathfrak{K}[v^{\pm 1}]$ if $\tau_{m-1}(A)\in\mathfrak{K}$ holds.

Let's have a closer look at $R_3$: The polynomial $\tau_r$ has degree $r$. The coefficient of the highest power of $v$ in $R_3$ equals $(-1)^{m-1}(D_1-D_2)$. Therefore $D_1-D_2\in\mathfrak{K}$ and $R_3$ is in $\mathfrak{K}[v^{\pm 1}]$ if and only if $D_1-D_2\in\mathfrak{K}$ and $R_3'=\sum_{i=0}^{m-2} (-1)^iC\tau_i(z)\tau_{r-2-i}(A)JB\in\mathfrak{K}[v^{\pm 1}]$. Looking repeatedly at the coefficient of the highest power of $v$ and shortening the term we get that $R_3'$ is in $\mathfrak{K}[v^{\pm 1}]$ if and only if $C\tau_0(A) JB, C\tau_1(A)JB, \ldots, C\tau_{m-2}(A)JB\in\mathfrak{K}$. Because $\Set{\tau_0,\ldots,\tau_{m-2}}$ is a $\IZ$-basis of $\lbrace f\in\IZ[T] \mid \deg(f)\leq m-2 \rbrace$ these terms are in $\mathfrak{K}$ if and only if $CA^0 JB, CA^1 JB, \ldots, CA^{m-2} JB$ are.

Thus we obtain the generating set
\begin{enumerate}
	\item[$(\alpha)$] $R_\alpha:=\tau_{m-1}(A)$,
	\item[$(\beta)$]  $R_\beta:=D_1-D_2$ and
	\item[$(\gamma)$] $R_{\gamma,k}:=CA^k JB$ for $0\leq k \leq m-2$
\end{enumerate}
for the ideal $\mathfrak{K}$.

\bigbreak
Step 3: The relations.

Again we decompose $\Xi$ as $\bigoplus_I \Xi E_I$ and use $R\in\mathfrak{K}$ if and only if $E_I R E_J$ in $\mathfrak{K}$ for all $I,J\subseteq S$.

\medbreak
To determine $E_I R_\alpha E_J$ we consider $E_I A^k E_J$. For $k=0$ this simplifies to $E_I A^0 E_J=\delta_{IJ} E_I=P_{IJ}^0$. For $k>0$ we obtain
\[A^k = \begin{cases}
\begin{pmatrix}	(A_1 A_2)^{\frac{k}{2}} & 0 \\ 0 & (A_2 A_1)^{\frac{k}{2}} \end{pmatrix} & \textrm{if}\;\;2\mid k \\
\begin{pmatrix}	0 & (A_1 A_2)^{\frac{k-1}{2}} A_1 \\ (A_2 A_1)^{\frac{k-1}{2}} A_2 & 0 \end{pmatrix} & \textrm{if}\;\;2\nmid k
\end{cases}\]
and substitute
\[A_1 = \sum_{\substack{I,J\subseteq S \\ s\in I, s\notin J \\ t\notin I, t\in J}} X_{IJ}^s,\quad
  A_2 = \sum_{\substack{I,J\subseteq S \\ s\notin I, s\in J \\ t\in I, t\notin J}} X_{IJ}^t\]
to obtain
\[\sum_{I_0,I_1,\ldots,I_k\subseteq S} X_{I_0 I_1}^s X_{I_1 I_2}^t X_{I_2 I_3}^s \ldots\]
where the sum is over all $I_i$ that satisfy $s\in I_{2i}\setminus I_{2i+1}$ and $t\in I_{2i+1}\setminus I_{2i}$ when we consider $A_1 A_2 A_1 \ldots$. Because $X_{IJ}^s = 0$ if $s\notin I\setminus J$, only the conditions for $I=I_0$ and $I_k=J$ are not vacuous. Therefore we could just sum over all paths of length $k$ that go to $I$ from $J$. We therefore obtain
\[\underbrace{A_1 A_2 \ldots}_{k} = \begin{cases}
	\sum\limits_{\substack{I,J\subseteq S \\ s\in I, s\in J \\ t\notin I, t\notin J}} P_{IJ}^k(s,t) & \textrm{if}\;\;2\mid k \\
	\sum\limits_{\substack{I,J\subseteq S \\ s\in I, s\notin J \\ t\notin I, t\in J}} P_{IJ}^k(s,t) & \textrm{if}\;\;2\nmid k
	\end{cases}.\]
For the other product we similarly obtain
\[\underbrace{A_2 A_1 \ldots}_{k} = \begin{cases}
	\sum\limits_{\substack{I,J\subseteq S \\ s\notin I, s\notin J \\ t\in I, t\in J}} P_{IJ}^k(t,s) & \textrm{if}\;\;2\mid k \\
	\sum\limits_{\substack{I,J\subseteq S \\ s\notin I, s\in J \\ t\in I, t\notin J}} P_{IJ}^k(t,s) & \textrm{if}\;\;2\nmid k
	\end{cases}\]
Multiplying with $E_I$ from the left and with $E_J$ from the right this equals either $0$ or $P_{IJ}^k(s,t)$ and $P_{IJ}^k(t,s)$ respectively. The element $E_I \tau_{m-1}(A) E_J\in\mathfrak{K}$ is, if it is not zero, equal to
\[P_{IJ}^{m-1}(s,t) + a_{m-2} P_{IJ}^{m-2}(s,t) + \ldots + a_2 P_{IJ}^2(s,t) + a_1 P_{IJ}^1(s,t) + a_0 P_{IJ}^0(s,t)\]
where $\tau_{m-1}(T) = T^{m-1} + a_{m-2} T^{m-2} + \ldots + a_2 T^2 + a_1 T^1 + a_0$ and similarly for the symmetric situation where $s,t$ are swapped.

\medbreak
The second kind of generators is easier: $R_\beta$ is equal to
\[\sum_{\substack{I,J\subseteq S \\ s\in I, s\notin J \\ t\in I, t\notin J}} X_{IJ}^s - X_{IJ}^t.\]
For those $I$, $J$ that do not occur in this sum $E_I R_\beta E_J=0$. For all others we obtain the element $X_{IJ}^s - X_{IJ}^t=P_{IJ}^1(s,t) - P_{IJ}^1(t,s)$. This is first case in the relations of type $(\beta)$.

\medbreak
Finally there is only one kind of generators left, namely $R_{\gamma,k} = CA^k JB$. We already know the powers of $A$ and therefore obtain
\[CA^kJB = \begin{cases}
C_2 (A_1 A_2)^{\frac{k}{2}} B_1 - C_1 (A_2 A_1)^{\frac{k}{2}} B_2 & \textrm{if}\;\;2\mid k \\
- C_2 (A_1 A_2)^{\frac{k-1}{2}} A_1 B_2 + C_1 (A_2 A_1)^{\frac{k-1}{2}} A_2 B_1 & \textrm{if}\;\;2\nmid k
\end{cases}.\]
We substitute the definitions
\[B_1 = \sum_{\substack{I,J\subseteq S \\ s\in I, s\notin J \\ t\notin I, t\notin J}} X_{IJ}^s,\quad
  B_2 = \sum_{\substack{I,J\subseteq S \\ s\notin I, s\notin J \\ t\in I, t\notin J}} X_{IJ}^t\]
\[C_1 = \sum_{\substack{I,J\subseteq S \\ s\in I, s\notin J \\ t\in I, t\in J}} X_{IJ}^s,\quad
  C_2 = \sum_{\substack{I,J\subseteq S \\ s\in I, s\in J \\ t\in I, t\notin J}} X_{IJ}^t.\]
If $s,t\in I$ or $s,t\notin J$ is not satisfied, then $E_I CA^k JB E_J=0$ because either $E_I C=0$ or $BE_J=0$. Otherwise
\begin{align*}
	E_I CA^kJB E_J &= \sum_{I_0,\ldots,I_k\subseteq S} X_{I I_0}^t X_{I_0 I_1}^s \ldots X_{I_{k-1} I_k}^t X_{I_k J}^s - X_{I, I_0}^s X_{I_0 I_1}^t \ldots X_{I_{k-1} I_k}^s X_{I_k J}^t \\
	&= P_{IJ}^{k+2}(t,s) - P_{IJ}^{k+2}(s,t)
\end{align*}
holds if $2\mid k$ and
\begin{align*}
	E_I CA^kJB E_J &= \sum_{I_0,\ldots,I_k\subseteq S} - X_{I I_0}^t X_{I_0 I_1}^s \ldots X_{I_{k-1} I_k}^s X_{I_k J}^t + X_{I I_0}^s X_{I_0 I_1}^t \ldots X_{I_{k-1} I_k}^t X_{I_k J}^s \\
	&= -P_{IJ}^{k+2}(t,s) + P_{IJ}^{k+2}(s,t)
\end{align*}
holds if $2\nmid k$. This provides the other elements in the relations of type $(\beta)$.
\end{proof}

Because of the $(\beta)$-relation $X_{IJ}^s - X_{IJ}^t = P_{IJ}^1(s,t) - P_{IJ}^1(t,s) = 0$ in $\Omega$, the upper index of these elements does not matter and it is well-defined to write $X_{IJ}$ for the common value of $X_{IJ}^s\in\Omega$ for all $s\in I\setminus J$. We will adopt this notation for the rest of this article.

Additionally $(\alpha)$ implies that $X_{IJ}^s = X_{JI}^t = 0$ holds in $\Omega$ for all $I,J\subseteq S$, $s\in I\setminus J$, $t\in J\setminus I$ with $m_{st}=2$ (i.e. $s$ and $t$ are not connected in the Dynkin-diagram of $(W,S)$). This allows us to think of $\Omega$ as a quotient of a path algebra over a much simpler quiver which was defined by Stembridge \citep[section 4]{Stembridge2008admissble}.
\begin{definition}
The \emph{compatibility graph} of $(W,S)$ is the directed graph $\mathcal{Q}_W$ with vertex set $\left\lbrace I \mid I\subseteq S\right\rbrace$ and a single edge $I\leftarrow J$ if and only if $I\setminus J\neq\emptyset$ and no element of $I\setminus J$ commutes with any element of $J\setminus I$.

An edge $I\leftarrow J$ with $I\supseteq J$ is called an \emph{inclusion edge}, all other edges are called \emph{transversal edges}.
\end{definition}

Note that transversal edges always occur in pairs of opposite orientation because their definition is symmetric: $I\leftarrow J$ is a transversal edge if and only if $I\setminus J\neq\emptyset$, $J\setminus I\neq\emptyset$ and all $s\in I\setminus J$ are connected to all $t\in J\setminus I$ in the Dynkin-diagram of $(W,S)$.

\begin{corollary}
$\Omega$ is a quotient of the path algebra $\mathbb{Z}\mathcal{Q}_W$.
\end{corollary}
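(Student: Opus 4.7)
The plan is to observe that the surjection $\Xi \twoheadrightarrow \Omega$, viewed via the isomorphism $\Xi \cong \mathbb{Z}\mathcal{Q}$ of lemma \ref{lemma:Xi_path_alg}, already factors through the natural quotient $\mathbb{Z}\mathcal{Q} \twoheadrightarrow \mathbb{Z}\mathcal{Q}_W$. The quiver $\mathcal{Q}_W$ differs from $\mathcal{Q}$ in two ways: the $|I\setminus J|$ parallel edges $I\leftarrow J$ are collapsed to a single edge, and that edge is deleted whenever some $s\in I\setminus J$ commutes with some $t\in J\setminus I$. It therefore suffices to verify that both identifications already hold inside $\Omega$.

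For the collapsing, the $(\beta)$-relation of theorem \ref{theorem:Omega_relations} yields $X_{IJ}^s = X_{IJ}^t$ in $\Omega$ for all $s,t\in I\setminus J$, which is precisely the observation made in the paragraph immediately preceding the corollary and legitimizes writing a single element $X_{IJ}\in\Omega$. For the deletion, the same paragraph records that the $(\alpha)$-relation specialized to $m_{st}=2$ forces $X_{IJ}^s = 0$ in $\Omega$ whenever there exists $t\in J\setminus I$ with $m_{st}=2$; combined with the $(\beta)$-relation this gives $X_{IJ} = 0$ in $\Omega$ whenever the edge $I\leftarrow J$ is absent from $\mathcal{Q}_W$.

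With both facts in hand, I would define the desired surjection $\mathbb{Z}\mathcal{Q}_W \twoheadrightarrow \Omega$ by sending each vertex idempotent $\tilde{E}_I$ to $E_I$ and the unique edge generator $\tilde{X}_{IJ}$ (for an edge $I\leftarrow J$ in $\mathcal{Q}_W$) to the common value $X_{IJ}\in\Omega$. Well-definedness reduces to checking the defining path-algebra relations $\tilde{E}_I \tilde{E}_J = \delta_{IJ}\tilde{E}_I$, $\sum_I \tilde{E}_I = 1$ and $\tilde{E}_I \tilde{X}_{IJ} \tilde{E}_J = \tilde{X}_{IJ}$, all of which hold for the images in $\Omega$ by lemma \ref{lemma:Xi_path_alg}. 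Surjectivity is immediate from $e_s = \sum_{s\in I} E_I$ and $x_s = \sum_{s\in I\setminus J} X_{IJ}^s$, since in the latter sum every possibly nonzero summand corresponds to an actual edge of $\mathcal{Q}_W$.

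The hard part is really not present here: the corollary is little more than a convenient repackaging of the paragraph preceding it within the path-algebra framework of lemma \ref{lemma:Xi_path_alg}, and the whole argument reduces to assembling the identifications $(\beta)$ and the vanishings coming from $(\alpha)$ at $m_{st}=2$ that have just been noted.
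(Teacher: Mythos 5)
Your proposal is correct and follows essentially the same route as the paper: the map $\tilde{E}_I\mapsto E_I$, $\tilde{X}_{IJ}\mapsto X_{IJ}^s$ is well-defined by the $(\beta)$-relations, and surjectivity follows because every $X_{IJ}^s$ not hit by the map vanishes by an $(\alpha)$-relation at $m_{st}=2$. The only difference is cosmetic — you phrase it as the quotient $\Xi\to\Omega$ factoring through $\mathbb{Z}\mathcal{Q}\to\mathbb{Z}\mathcal{Q}_W$, while the paper constructs the homomorphism $\mathbb{Z}\mathcal{Q}_W\to\Omega$ directly.
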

\begin{proof}
Denote the vertex elements in $\IZ\mathcal{Q}_W$ by $\tilde{E}_I$ and the edge elements by $\tilde{X}_{IJ}$. Then $\tilde{E}_I\mapsto E_I$ and $\tilde{X}_{IJ} \mapsto X_{IJ}^s$ with any $s\in I\setminus J$ extends to a well-defined algebra homomorphism $\IZ\mathcal{Q}_W\to\Omega$ by the $(\beta)$-relation. It is surjective because all elements $X_{IJ}^s\in\Omega$ are either zero by some $(\alpha)$-type relation as seen above or contained in the image of this morphism and $\Omega$ is generated by the elements $X_{IJ}^s$ and $E_I$.
\end{proof}

\begin{example}
Figure \ref{fig:wgraph_alg:comp_graphs} displays the compatibility graphs of the finite irreducible Coxeter groups of rank $\leq 4$. For the sake of clarity inclusion edges are only displayed in rank 2 and rank 3 and only between those $I,J\subseteq S$ that satisfy $\abs{I\setminus J}=1$. Pairs of transversal edges $I\leftrightarrows J$ are combined into one (bold) undirected edge.

\begin{figure}[htp]
	\centering
	\begin{tabular}{c|c}
		\begin{minipage}[c][ 75pt]{ 75pt}
		\begin{tikzpicture}

\node[Vertex] (E_empty) at (0,0) {$\emptyset$};

\node[Vertex] (E_1) at (-1,1) {$1$};
\node[Vertex] (E_2) at (+1,1) {$2$};

\node[Vertex] (E_12) at (0,2) {$12$};

\path[EdgeI]
	(E_empty) edge (E_1)
	(E_empty) edge (E_2)
	(E_1) edge (E_12)
	(E_2) edge (E_12);

\path[EdgeT]
	(E_1) edge (E_2);
\end{tikzpicture}
		\end{minipage}	& 
		\begin{minipage}[c][100pt]{100pt}
		\begin{tikzpicture}

\node[Vertex] (E_empty) at (0,0) {$\emptyset$};

\node[Vertex] (E_1) at (-1.5,1) {$1$};
\node[Vertex] (E_2) at ( 0.0,1) {$2$};
\node[Vertex] (E_3) at (+1.5,1) {$3$};

\node[Vertex] (E_23) at (-1.5,2) {$23$};
\node[Vertex] (E_13) at ( 0.0,2) {$13$};
\node[Vertex] (E_12) at (+1.5,2) {$12$};

\node[Vertex] (E_123) at (0,3) {$123$};

\path[EdgeI]
	(E_empty) edge (E_1)
	(E_empty) edge (E_2)
	(E_empty) edge (E_3)
	(E_1) edge (E_12)
	(E_1) edge (E_13)
	(E_2) edge (E_12)
	(E_2) edge (E_23)
	(E_3) edge (E_13)
	(E_3) edge (E_23)
	(E_12) edge (E_123)
	(E_13) edge (E_123)
	(E_23) edge (E_123);

\path[EdgeT]
	(E_1) edge (E_2)
	(E_2) edge (E_3)
	      edge (E_13)
	(E_23) edge (E_13)
	(E_13) edge (E_12);

\end{tikzpicture}
		\end{minipage} \\
		\hline
		\begin{minipage}[c][320pt]{200pt}
		\begin{tikzpicture}

\node[Vertex] (E_empty) at (0,0) {$\emptyset$};

\node[Vertex] (E_1) at (-3.0,1) {$1$};
\node[Vertex] (E_2) at (-1.0,1) {$2$};
\node[Vertex] (E_3) at (+1.0,1) {$3$};
\node[Vertex] (E_4) at (+3.0,1) {$4$};

\node[Vertex] (E_12) at (-3.0,2.5) {$12$};
\node[Vertex] (E_13) at (-1.0,2.5) {$13$};
\node[Vertex] (E_23) at ( 0.0,2.0) {$23$};
\node[Vertex] (E_14) at ( 0.0,3.0) {$14$};
\node[Vertex] (E_24) at (+1.0,2.5) {$24$};
\node[Vertex] (E_34) at (+3.0,2.5) {$34$};

\node[Vertex] (E_123) at (-3.0,4) {$123$};
\node[Vertex] (E_124) at (-1.0,4) {$124$};
\node[Vertex] (E_134) at (+1.0,4) {$134$};
\node[Vertex] (E_234) at (+3.0,4) {$234$};

\node[Vertex] (E_1234) at (0,5) {$1234$};

%\path[EdgeI]
%	(E_empty) edge (E_1)
%	(E_empty) edge (E_2)
%	(E_empty) edge (E_3)
%	(E_empty) edge (E_4)
%
%	(E_1) edge (E_12)
%	(E_1) edge (E_13)
%	(E_1) edge[bend right=30] (E_14)
%
%	(E_2) edge (E_12)
%	(E_2) edge (E_23)
%	(E_2) edge[bend right=30] (E_24)
%
%	(E_3) edge[bend left=30] (E_13)
%	(E_3) edge (E_23)
%	(E_3) edge (E_34)
%
%	(E_4) edge[bend left=30] (E_14)
%	(E_4) edge (E_24)
%	(E_4) edge (E_34)
%
%	(E_12) edge (E_123)
%	(E_12) edge (E_124)
%
%	(E_13) edge (E_123)
%	(E_13) edge[bend left=30] (E_134)
%
%	(E_23) edge[bend right=30] (E_123)
%	(E_23) edge[bend left=30] (E_234)
%
%	(E_14) edge (E_124)
%	(E_14) edge (E_134)
%
%	(E_24) edge[bend right=30] (E_124)
%	(E_24) edge (E_234)
%
%	(E_34) edge (E_134)
%	(E_34) edge (E_234)
%
%	(E_123) edge (E_1234)
%	(E_124) edge (E_1234)
%	(E_134) edge (E_1234)
%	(E_234) edge (E_1234);
%

\path[EdgeT]
	(E_1) edge (E_2)
	(E_2) edge (E_3) edge (E_13)
	(E_3) edge (E_4) edge (E_24)
	
	(E_13) edge (E_23) edge (E_14) edge (E_12)
	(E_24) edge (E_23) edge (E_14) edge (E_34)
	
	(E_123) edge (E_124)
	(E_124) edge (E_134) edge (E_13)
	(E_134) edge (E_234) edge (E_24);

\end{tikzpicture}
		\end{minipage} &
		\begin{minipage}[c][320pt]{150pt}
		\begin{tikzpicture}[
	Vertex/.style = {inner sep = 1pt,outer sep = 2pt,minimum size=15pt,
	                 circle,draw=black!70,thick,
	                 font=\scriptsize},
	EdgeT/.style = {ultra thick,black},
	EdgeI/.style = {black!80,->}
]

\node[Vertex] (E_empty) at (0,0) {$\emptyset$};

\node[Vertex] (E_0) at (-2,1) {$0$};
\node[Vertex] (E_1) at ( 0,1) {$1$};
\node[Vertex] (E_3) at (+2,1) {$3$};

\node[Vertex] (E_2) at ( 0,2) {$2$};

\node[Vertex] (E_13) at (-2,4) {$13$};
\node[Vertex] (E_03) at ( 0,4) {$03$};
\node[Vertex] (E_01) at (+2,4) {$01$};

\node[Vertex] (E_02) at (-2,6) {$02$};
\node[Vertex] (E_12) at ( 0,6) {$12$};
\node[Vertex] (E_23) at (+2,6) {$23$};

\node[Vertex] (E_013) at (0,8) {$013$};

\node[Vertex] (E_123) at (-2,9) {$123$};
\node[Vertex] (E_023) at ( 0,9) {$023$};
\node[Vertex] (E_012) at (+2,9) {$012$};

\node[Vertex] (E_1234) at (0,10) {$0123$};

%\path[EdgeI]
%	(E_empty) edge (E_1)
%	(E_empty) edge (E_2)
%	(E_empty) edge (E_3)
%	(E_empty) edge (E_4)
%
%	(E_1) edge (E_12)
%	(E_1) edge (E_13)
%	(E_1) edge[bend right=30] (E_14)
%
%	(E_2) edge (E_12)
%	(E_2) edge (E_23)
%	(E_2) edge[bend right=30] (E_24)
%
%	(E_3) edge[bend left=30] (E_13)
%	(E_3) edge (E_23)
%	(E_3) edge (E_34)
%
%	(E_4) edge[bend left=30] (E_14)
%	(E_4) edge (E_24)
%	(E_4) edge (E_34)
%
%	(E_12) edge (E_123)
%	(E_12) edge (E_124)
%
%	(E_13) edge (E_123)
%	(E_13) edge[bend left=30] (E_134)
%
%	(E_23) edge[bend right=30] (E_123)
%	(E_23) edge[bend left=30] (E_234)
%
%	(E_14) edge (E_124)
%	(E_14) edge (E_134)
%
%	(E_24) edge[bend right=30] (E_124)
%	(E_24) edge (E_234)
%
%	(E_34) edge (E_134)
%	(E_34) edge (E_234)
%
%	(E_123) edge (E_1234)
%	(E_124) edge (E_1234)
%	(E_134) edge (E_1234)
%	(E_234) edge (E_1234);

\path[EdgeT]
	(E_2) edge (E_0)
	      edge (E_1)
	      edge (E_3)
	      
	      edge (E_13)
	      edge (E_03)
	      edge (E_01)
	(E_13) edge (E_02)
	       edge (E_12)
	(E_03) edge (E_02)
	       edge (E_23)
	(E_01) edge (E_12)
	       edge (E_23)

	(E_013) edge (E_02)
	        edge (E_12)
	        edge (E_23)
	        
	        edge (E_123)
	        edge (E_023)
	        edge (E_012)

	(E_2) edge[bend right=30] (E_013);

\end{tikzpicture} 
		\end{minipage}
	\end{tabular}
	
	\caption{Compatibility graphs for small Coxeter groups; top left for $I_2(m)$, top right for $A_3$, $B_3$ and $H_3$, bottom left for $A_4$, $B_4$ and $F_4$, bottom right for $D_4$.}
	\label{fig:wgraph_alg:comp_graphs}
\end{figure}
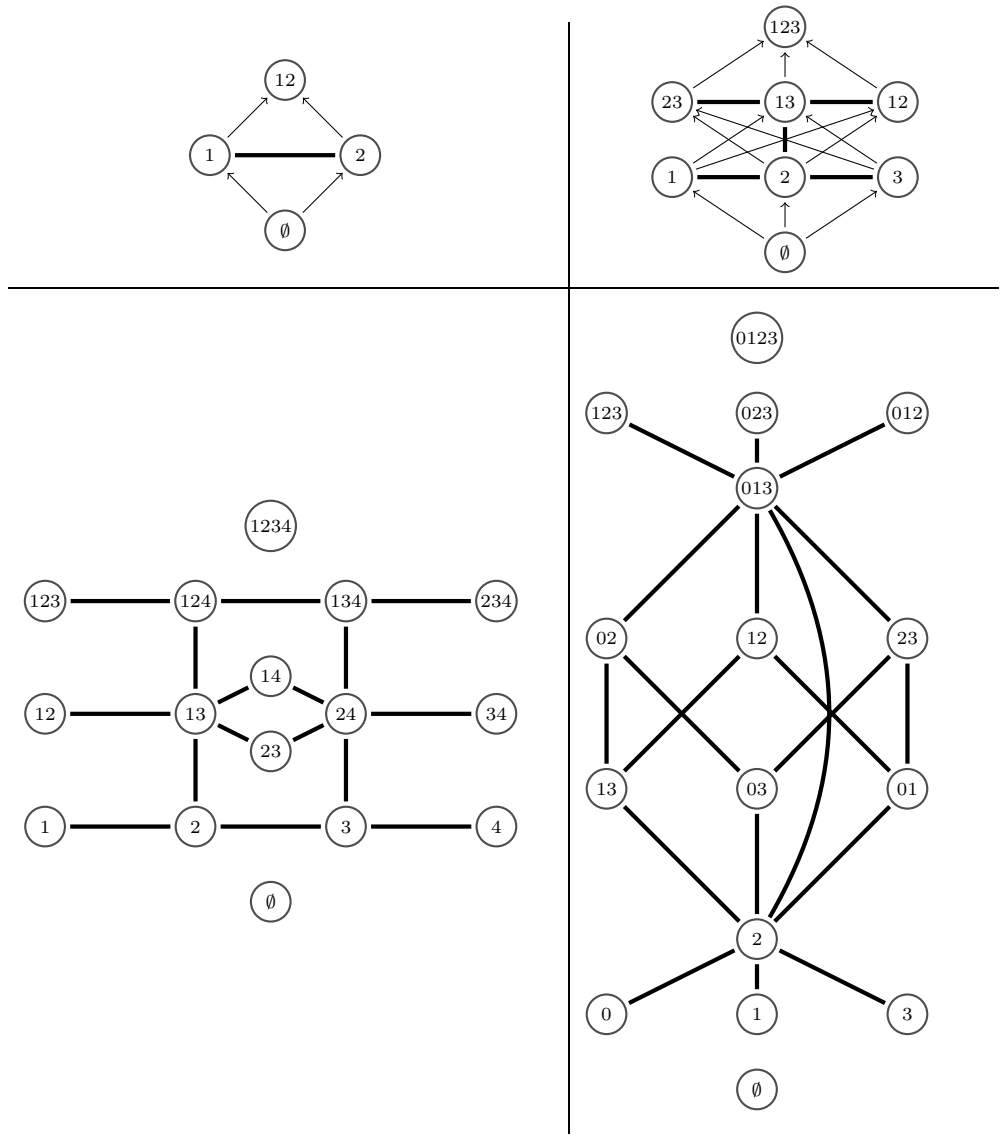
\end{example}

\section{The decomposition conjecture}\label{section:decomposition_conjecture}

While trying to prove Gyoja's conjecture\footnote{which states that the Jacobson radical of $K\Omega$ has codimension $|W|$ if $K$ is a sufficiently large field of characteristic zero or equivalently that two irreducible $K\Omega$-modules are isomorphic iff their restrictions to $K(v)H$ are isomorphic, c.f. \cite[remark 2.18]{Gyoja}, \citep[theorem 4.3.7]{hahn2013diss}.} and to better understand the internal structure of $\Omega$ and $W$-graphs I found a number of very similar proofs for some small types of Coxeter groups. The essence of these proofs is captured by the following four conjectural properties of $\Omega$.

\begin{conjecture}\label{conj:decomposition}
Let $k\subseteq\IC$ be a good ring for $(W,S)$. There exists a family $(F^\lambda)_{\lambda\in\Irr(W)}$ of elements of $k\Omega$ with the following properties:
\begin{enumerate}
	\item[(Z1)] The $F^\lambda$ are pairwise orthogonal idempotents and decompose the identity:
	\[\forall\lambda,\mu\in\Irr(W): F^\lambda F^\mu = \delta_{\lambda\mu} F^\lambda, \quad 1=\smash{\sum_{\lambda\in\Irr(W)}} F^\lambda\]
	\item[(Z2)] This decomposition is compatible with the decomposition induced by the path-algebra structure:
	\[\forall\lambda\in\Irr(W) \forall I\subseteq S: E_I F^\lambda = F^\lambda E_I\]
	\item[(Z3)] There is a partial order $\preceq$ on $\Irr(W)$ such that only ``downward edges'' exist: If $F^\lambda k\Omega F^\mu \neq 0$, then $\lambda \preceq \mu$.
	\item[(Z4)] There are surjective $k$-algebra morphisms $\psi_\lambda: k^{d_\lambda\times d_\lambda} \twoheadrightarrow F^\lambda k\Omega F^\lambda$ for all $\lambda\in\Irr(W)$ where $d_\lambda$ denotes the degree of the character $\lambda$.
\end{enumerate}
\end{conjecture}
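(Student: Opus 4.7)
The plan is to use the explicit presentation of $\Omega$ as a quotient of the path algebra $\IZ\mathcal{Q}_W$ modulo the $(\alpha)$- and $(\beta)$-relations established in Theorem \ref{theorem:Omega_relations}. Since this is a conjecture (known only for small types such as $I_2(m)$, $A_1$--$A_4$, and $B_3$), my plan describes the strategy that succeeds in those cases and points to the obstacle that prevents a uniform argument.

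First, I would show that $k\Omega$ is a finitely generated $k$-module by exhibiting a finite spanning set of paths in $\mathcal{Q}_W$ and using the $(\alpha)$- and $(\beta)$-relations to reduce longer paths to that set. In the known cases this is a direct combinatorial computation, made tractable by the smallness of the compatibility graph and by the fact that $(\alpha)$ rewrites long alternating words $P_{IJ}^{m-1}(s,t)$ in terms of shorter ones, while $(\beta)$ kills the distinction between different choices of colour along an edge. Once finite generation is in hand, $\rad(k\Omega)$ is nilpotent and idempotent lifting becomes available.

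Second, I would identify a nilpotent two-sided ideal $\mathfrak{n}\subseteq k\Omega$ with $k\Omega/\mathfrak{n}\cong\prod_{\lambda\in\Irr(W)} k^{d_\lambda\times d_\lambda}$. A natural candidate is the ideal generated by certain "upward" transversal edges $X_{IJ}$ of $\mathcal{Q}_W$ together with those inclusion edges that do not contribute to any irreducible $W$-graph realization. The primitive central idempotents $\bar F^\lambda$ of the semisimple quotient then lift to orthogonal idempotents $F^\lambda\in k\Omega$ summing to $1$, yielding (Z1). For (Z2) I would arrange the lifting inside the centralizer $\bigoplus_{I\subseteq S} E_I(k\Omega) E_I$; this is compatible because each $E_I$ is already an idempotent in the underlying path-algebra grading, so the same grading passes to $k\Omega/\mathfrak{n}$ and one can perform the lift block by block. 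Condition (Z3) then holds for the chosen $\preceq$ essentially by construction of $\mathfrak{n}$: an arrow $F^\lambda k\Omega F^\mu\neq 0$ with $\lambda\not\preceq\mu$ would produce an element of $k\Omega/\mathfrak{n}$ connecting different matrix blocks, contradicting semisimplicity. Finally (Z4) is obtained by lifting a full set of matrix units from $F^\lambda(k\Omega/\mathfrak{n})F^\lambda\cong k^{d_\lambda\times d_\lambda}$ back to $F^\lambda k\Omega F^\lambda$ and defining $\psi_\lambda$ on matrix units; surjectivity is automatic from the surjection onto the semisimple quotient combined with the fact that $F^\lambda\mathfrak{n} F^\lambda$ is contained in the image.

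The main obstacle is the identification and structural description of $\mathfrak{n}$. In essence, carrying out this program is equivalent to proving Gyoja's conjecture that $k\Omega/\rad(k\Omega)\cong\prod_\lambda k^{d_\lambda\times d_\lambda}$ together with the assertion that the blocks have exactly the expected sizes $d_\lambda\times d_\lambda$ (rather than being larger) and that the lifting can be made compatible with the $E_I$. In the small cases one can push this through by explicit computation with the generators and relations of Theorem \ref{theorem:Omega_relations}, but for general $W$ a uniform description of $\mathfrak{n}$ -- perhaps in terms of Lusztig's $a$-function or the Kazhdan-Lusztig two-sided order -- is not known, which is precisely why the statement remains a conjecture.
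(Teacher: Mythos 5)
This statement is a conjecture: the paper gives no general proof, only case-by-case verifications for $I_2(m)$, $A_1$--$A_4$ and $B_3$ in section \ref{section:proof_of_conjecture}, so your proposal can only be judged as a strategy. As such it runs in the opposite direction to the paper's. The paper builds the $F^\lambda$ bottom-up: the $(\alpha)$-relations of theorem \ref{theorem:Omega_relations} give polynomial identities for the loop elements $X_{IJ}X_{JI}$ over each vertex $E_I$ of the compatibility graph, whose spectral decompositions yield explicit orthogonal idempotents $F_I^\lambda\leq E_I$; these are propagated along transversal edges by idempotent transport (lemma \ref{lemma:idempotenttransport}), one sets $F^\lambda:=\sum_I F_I^\lambda$ (so Z1 and Z2 hold by construction), and Z3, Z4 are checked by direct computation with the $(\alpha)$- and $(\beta)$-relations. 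The description of $\rad(k\Omega)$ and $k\Omega/\rad(k\Omega)\isomorphic\prod_\lambda k^{d_\lambda\times d_\lambda}$ is then derived as a \emph{consequence} of Z1--Z4 in the theorem following the conjecture, whereas you take it as the starting point. Since identifying your ideal $\mathfrak{n}$ is, as you concede, essentially equivalent to Gyoja's conjecture plus the block-size statement, the proposal is circular as a proof; that is forgivable for an avowedly open statement, but two of your deduction steps would fail even if $\mathfrak{n}$ were handed to you.

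First, Z3 does not follow from semisimplicity of $k\Omega/\mathfrak{n}$: a nonzero element of $F^\lambda k\Omega F^\mu$ with $\lambda\neq\mu$ simply lies in $\mathfrak{n}$ and maps to $0$ in the quotient, so no ``element connecting different matrix blocks'' and no contradiction arises. The existence of the partial order is acyclicity of the quiver on $\Irr(W)$, which is strictly more information than the isomorphism type of the semisimple quotient: the path algebra of a $2$-cycle modulo paths of length two has nilpotent radical and semisimple quotient $k\times k$, yet admits no order with only downward edges. Second, for Z4 the lifted matrix units only give an embedding $k^{d_\lambda\times d_\lambda}\hookrightarrow F^\lambda k\Omega F^\lambda$; surjectivity of $\psi_\lambda$ requires $F^\lambda\mathfrak{n}F^\lambda=0$ (already $k[x]/(x^2)$ with $d=1$ defeats the lifting argument), and the claim that ``$F^\lambda\mathfrak{n}F^\lambda$ is contained in the image'' is exactly the nontrivial content of Z4, not something automatic. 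These two points are precisely what the paper's explicit idempotent computations establish in the known cases, and they are where any general argument would have to do real work.
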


\begin{remark}
The ``edge'' terminology in Z3 refers to the quiver $\Lambda$ in the next theorem.
\end{remark}

The decomposition conjecture is of interest because it implies several important properties of the $W$-graph algebra and its modules (that is: $W$-graphs) as the following theorem demonstrates.
\begin{theorem}
Assume that the decomposition conjecture is true for the finite Coxeter group $(W,S)$ and $k$ a good ring for $(W,S)$. Then the following properties holds:
\begin{enumerate}
	\item Consider the quiver $\Lambda$ which has $\Irr(W)$ as its set of vertices and an edge $\lambda\leftarrow\mu$ if and only if $\lambda\prec\mu$. Then $k\Omega$ is a quotient of the generalized path algebra (c.f. \cite{coelho2000pathalg}) over the quiver $\Lambda$ which has $k^{d_\lambda\times d_\lambda}$ as vertex algebras.
	\item $k\Omega$ is finitely generated as a $k$-module.
\end{enumerate}
Furthermore, if $k$ is a field, contain, then the following hold:
\begin{enumerate}
	\setcounter{enumi}{2}
	\item The Jacobson radical $\rad(k\Omega)$ is generated by the elements $F^\lambda X_{IJ} F^\mu$ with $\lambda\prec\mu$ and $k\Omega/\rad(k\Omega) \isomorphic \prod_{\lambda\in\Irr(W)} k^{d_\lambda\times d_\lambda}$.
	%\item Gyoja's conjecture (c.f. \citep[2.18]{Gyoja}) holds, i.e. the following (equivalent) assertions are true if $k$ is a field:
	%\begin{enumerate}
		%\item $\abs{\Irr(k\Omega)} = \abs{\Irr(W)}$
		%\item Restriction to $k(v)H$ is an equivalence from the full subcategory of semisimple $k(v)\Omega$-modules to the category of $k(v)H$-modules.
		%\item If $X,Y$ are semisimple $k(v)\Omega$-modules then $f:X\to Y$ is $\Omega$-linear if and only if it is $H$-linear.
	%\end{enumerate}
	\item A $k\Omega$-module $V$ is simple if and only if the restriction of $k(v)V$ to $k(v)H$ is simple. Furthermore (after reindexing the family $(F^\lambda)_{\lambda\in\Irr(W)}$ if necessary) the latter has isomorphism class $\lambda$ if and only if $F^\lambda V=V$ holds.
	\item Every $k\Omega$-module $V$ has a family of natural submodules $(V^{\preceq\lambda})_{\lambda\in\Irr(W)}$ such that
	\begin{itemize}
		\item $\lambda\preceq\mu \implies V^{\preceq\lambda} \leq V^{\preceq\mu}$ and 
		\item $V^{\preceq\lambda} / V^{\prec\lambda}$ is isomorphic to a direct sum of irreducibles of isomorphism class $\lambda$ where $V^{\prec\lambda} := \sum_{\mu\prec\lambda} V^{\preceq\mu}$
	\end{itemize}
\end{enumerate}
\end{theorem}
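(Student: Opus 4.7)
The plan is to build everything on the Peirce decomposition
\[k\Omega = \bigoplus_{\lambda,\mu\in\Irr(W)} F^\lambda k\Omega F^\mu\]
from (Z1), which by (Z3) collapses to the ``block triangular'' form $\bigoplus_{\lambda \preceq \mu} F^\lambda k\Omega F^\mu$. For (1) I would take the diagonal summands $F^\lambda k\Omega F^\lambda$ (each a quotient of $k^{d_\lambda\times d_\lambda}$ via $\psi_\lambda$ by (Z4)) as the vertex algebras of the generalized path algebra $k\Lambda$ and the off-diagonal summands as its edge bimodules; the universal property of $k\Lambda$ then produces the claimed surjection $k\Lambda \twoheadrightarrow k\Omega$. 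For (2), $\Lambda$ is finite and acyclic (because $\prec$ is a strict partial order), so path lengths are bounded by $|\Irr(W)|$; combined with finite-dimensionality of the vertex algebras and of the free bimodules $k^{d_\lambda\times d_\lambda}\otimes_k k^{d_\mu\times d_\mu}$ (which surject onto the edge bimodules), this makes $k\Lambda$ and hence $k\Omega$ finitely generated over $k$.

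For (3), let $R$ be the two-sided ideal of $k\Omega$ generated by the elements $F^\lambda X_{IJ} F^\mu$ with $\lambda \prec \mu$. The crucial identification is $R = \bigoplus_{\lambda\prec\mu} F^\lambda k\Omega F^\mu$: since $k\Omega$ is generated as a $k$-algebra by the $E_I$ and $X_{IJ}$, after inserting $1 = \sum_\nu F^\nu$ between successive generators, every element of $F^\lambda k\Omega F^\mu$ with $\lambda\prec\mu$ becomes a sum of products of the form
\[F^\lambda X_{I_0 I_1} F^{\nu_1} X_{I_1 I_2} F^{\nu_2} \cdots F^{\nu_{n-1}} X_{I_{n-1} I_n} F^\mu.\]
By (Z3) the intermediate indices $\lambda = \nu_0 \preceq \nu_1 \preceq \cdots \preceq \nu_n = \mu$ weakly increase, and since they progress strictly from $\lambda$ to $\mu$, at least one factor $F^{\nu_i} X_{I_i I_{i+1}} F^{\nu_{i+1}}$ has $\nu_i\prec\nu_{i+1}$ and therefore lies in the generating set of $R$. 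Consequently $R$ equals the off-diagonal part and is nilpotent (any product of more than $|\Irr(W)|$ generators vanishes by acyclicity), so $R\subseteq\rad(k\Omega)$. Conversely $k\Omega/R \isomorphic \bigoplus_\lambda F^\lambda k\Omega F^\lambda$, and since $k^{d_\lambda\times d_\lambda}$ is a simple $k$-algebra and $F^\lambda\neq 0$, each $\psi_\lambda$ must actually be an isomorphism, so the quotient is semisimple and $\rad(k\Omega)\subseteq R$.

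For (4), any simple $k\Omega$-module $V$ factors through $k\Omega/\rad(k\Omega)\isomorphic\prod_\lambda k^{d_\lambda\times d_\lambda}$, whose simples are the standard $d_\lambda$-dimensional matrix modules and are characterized by $F^\lambda V = V$; the identification with simples of $k(v)H$ then reduces to a dimension count using that $k(v)H$ is split semisimple with simples indexed by $\Irr(W)$ of dimensions $d_\lambda$ and total dimension $\sum_\lambda d_\lambda^2 = |W|$. For (5), set $V^{\preceq\lambda} := \sum_{\mu\preceq\lambda} F^\mu V$; if $v \in F^\mu V$ with $\mu\preceq\lambda$ and $a \in k\Omega$, then $av = \sum_{\lambda'} F^{\lambda'} a F^\mu v$ is supported by (Z3) on $\lambda'\preceq\mu\preceq\lambda$, so $V^{\preceq\lambda}$ is a submodule. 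The subquotient $V^{\preceq\lambda}/V^{\prec\lambda}$ is annihilated by every $F^\nu$ with $\nu\neq\lambda$, so its $k\Omega$-module structure factors through $F^\lambda k\Omega F^\lambda\isomorphic k^{d_\lambda\times d_\lambda}$, forcing it to be a direct sum of copies of the simple module of type $\lambda$.

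The main technical hurdle is expected to lie in part (3), specifically in verifying that no additional generators beyond the $F^\lambda X_{IJ} F^\mu$ are needed to cut out the radical. The resolution-of-identity argument above reduces this to the combinatorial fact that each product appearing in the expansion must contain a strictly-increasing factor, which rests on the interplay between (Z2) (so that the $F^{\nu_i}$ inserted between the $X_{I_i I_{i+1}}$ really split across the path-algebra generators) and (Z3) (guaranteeing weak monotonicity of indices). Once (3) is established, parts (1), (4) and (5) all follow formally from the Wedderburn-type description of $k\Omega/\rad(k\Omega)$ and the generalized path algebra structure.
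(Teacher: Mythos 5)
Your overall architecture matches the paper's: the surjection from the generalized path algebra via its universal property, the identification of the radical with the strictly off-diagonal part $R=\bigoplus_{\lambda\prec\mu}F^\lambda k\Omega F^\mu$ (nilpotent by acyclicity of $\prec$, with semisimple quotient), and the filtration $V^{\preceq\lambda}=F^{\preceq\lambda}V$. Parts 1, 2, 5 and the inclusion $\rad(k\Omega)\subseteq R$ are fine as sketched.

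The genuine gap sits in the passage from ``$k\Omega/R$ is a product of quotients of matrix algebras'' to ``$k\Omega/\rad(k\Omega)\isomorphic\prod_{\lambda\in\Irr(W)}k^{d_\lambda\times d_\lambda}$'', and in the companion claim in (4). Since $k^{d_\lambda\times d_\lambda}$ is simple, $\psi_\lambda$ is an isomorphism iff $F^\lambda\neq 0$ --- but Z1--Z4 nowhere assert $F^\lambda\neq0$, and this cannot be a formal consequence of them: an abstract algebra satisfying Z1--Z4 could have some $F^\lambda=0$, in which case $k\Omega/\rad(k\Omega)$ would be a proper subproduct. You assert ``$F^\lambda\neq 0$'' without justification, and likewise in (4) the hard direction --- that the restriction to $k(v)H$ of a simple $k\Omega$-module is simple rather than decomposing further --- is compressed into ``a dimension count'' that is never set up. Both points require representation-theoretic input relating $\Omega$ to $H$, which is exactly where the paper's proof stops being formal: it invokes Gyoja's theorem to choose, for each $\lambda$, a $W$-graph realizing the irreducible $k(v)H$-module of class $\lambda$, obtains an explicit map $k\Omega\to\prod_\lambda k^{d_\lambda\times d_\lambda}$, and proves surjectivity onto the \emph{full} product by noting that the image of $k(v)H$ is already all of $\prod_\lambda k(v)^{d_\lambda\times d_\lambda}$ (each irreducible occurring exactly once); comparing dimensions then yields both $k\Omega/\rad(k\Omega)\isomorphic\prod_\lambda k^{d_\lambda\times d_\lambda}$ and the bijection $\Irr(k\Omega)\leftrightarrow\Irr(k(v)H)$. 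Your count can be salvaged without Gyoja by using instead the injectivity of $\iota:k(v)H\to k(v)\Omega$ (proved earlier via the Kazhdan--Lusztig $W$-graph): $k(v)\Omega$ is then a faithful module over the split semisimple algebra $k(v)H$, so every $U_\mu\in\Irr(k(v)H)$ occurs in the restriction of some simple $k(v)\Omega$-module $V_\lambda$ with $\dim V_\lambda=d_\lambda$ and $F^\lambda\neq0$; writing $V_\lambda|_{k(v)H}=\bigoplus_\mu m_{\lambda\mu}U_\mu$ and comparing $\sum_{\lambda:F^\lambda\neq0}d_\lambda=\sum_{\lambda,\mu}m_{\lambda\mu}d_\mu\geq\sum_\mu d_\mu$ forces $(m_{\lambda\mu})$ to be a permutation matrix and all $F^\lambda\neq0$. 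One of these two arguments must actually be made; as written, parts (3) and (4) of the proposal are incomplete.
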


\begin{remark}
Given that the Kazhdan-Lusztig-$W$-graph is indecomposable but not irreducible, it cannot be expected that an arbitrary $\Omega$-module decomposes as a direct sum of its irreducible constituents. The special filtration appearing in the above theorem is the next best thing one can hope for: One finds the irreducible constituents in the layers of a natural filtration and even nicely grouped into isomorphism classes. This fact and the first part of the theorem, saying that $\Omega$ itself is composed of much simpler parts like matrix algebras and path algebras, motivates the name ``decomposition conjecture''.

To the best of my knowledge, the decomposition conjecture has neither directly nor in a similar form been stated before in the literature, apart from my dissertation \cite{hahn2013diss}. The above consequences of conjecture \ref{conj:decomposition} also have not been considered or proved before, even in special cases, as far as I know.
\end{remark}

\begin{proof}
Denote with $k\widetilde{\Omega}$ this generalized path algebra and recall that it is characterized by the following properties:
\begin{itemize}
	\item It contains a set of pairwise orthogonal idempotents $f_\lambda$ corresponding to the vertices such that $\sum_\lambda f_\lambda = 1$ and $f_\lambda k\widetilde{\Omega} f_\lambda = k^{d_\lambda\times d_\lambda}$,
	\item It contains a set of elements $y_{\lambda\mu}$ corresponding to the edges $\lambda\leftarrow\mu$ such that $y_{\lambda\mu} = f_\lambda y_{\lambda\mu} f_\mu$,
	\item It satisfies the universal mapping property with respect to these features: For any $k$-algebra $A$, any set of elements $\Set{f'_\lambda, y'_{\lambda\mu} | \lambda,\mu\in\Irr(W)}$ satisfying these properties and any $\psi_\lambda:k^{d_\lambda\times d_\lambda}\to f'_\lambda A f'_\lambda$ there exists a unique morphism of $k$-algebras $\psi: k\widetilde{\Omega}\to A$ with $\psi(f_\lambda)=f'_\lambda, \psi(y_{\lambda\mu})= y'_{\lambda\mu}$ and $\psi{|f_\lambda A f_\lambda} = \psi_\lambda$.
\end{itemize}

We define elements $Y_{\lambda\mu} := \sum_{I,J\subseteq S} F^\lambda X_{IJ} F^\mu \in k\Omega$. Note that $Y_{\lambda\mu}=0$ if $\lambda\not\preceq\mu$ by Z3. The universal property ensures that the morphisms from Z4 $\psi_\lambda: k^{d_\lambda\times d_\lambda} \to k\Omega$ together with $f_\lambda\mapsto F^\lambda$ and $y_{\lambda\mu} \mapsto Y_{\lambda\mu}$ uniquely extend to an algebra morphism $\psi: k\widetilde{\Omega}\to k\Omega$ (this uses Z1).

We verify that this is an epimorpism. By construction
\[F_I^\lambda \overset{Z2}{=} F^\lambda E_I F^\lambda\in F^\lambda k\Omega F^\lambda \overset{Z4}{=} \im(\psi_\lambda) \subseteq \im(\psi)\]
for all $I\subseteq S,\lambda\in\Irr(W)$. Therefore $E_I=\sum_\lambda F_I^\lambda \in\im(\psi)$. Also $X_{IJ} \overset{Z3}{=} \sum_{\lambda} F^\lambda X_{IJ} F^\lambda + \sum_{\lambda\prec\mu} Y_{\lambda\mu}\in\im(\psi)$ for all $I,J\subseteq S$. Because we already know that $\{E_I, X_{IJ} \mid I,J\subseteq S\}$ generates $k\Omega$ we are done.

\medbreak
Because $\psi: k\widetilde{\Omega}\to k\Omega$ is surjective, $k\Omega$ is finitely generated as a $k$-module because $k\widetilde{\Omega}$ is, and $\psi(\rad(k\widetilde{\Omega}))\subseteq\rad(k\Omega)$ holds. The morphism $\psi$ therefore induces a surjection $k\widetilde{\Omega} / \rad(k\widetilde{\Omega}) \twoheadrightarrow k\Omega/\rad(k\Omega)$.

\medbreak
Now consider the case that $k$ is a field. The radical of the generalized path algebra is then easily seen to coincide with the ideal generated by the edge elements $y_{\lambda\mu}$ because the quiver $\Lambda$ is acyclic (see \citep[proposition 1.3]{coelho2000pathalg} for a more general characterisation of the radical of generalized path algebras). In fact $k\widetilde{\Omega} / \rad(k\widetilde{\Omega}) = \prod_{\lambda} k^{d_\lambda\times d_\lambda}$.

This implies $\dim_k k\Omega / \rad(k\Omega) \leq \sum_{\lambda} d_\lambda^2$. We show that $\prod_{\lambda} k^{d_\lambda\times d_\lambda}$ is in fact a quotient of $k\Omega$ to establish equality. For each $\lambda$ choose a $W$-graph with edge weights in $k$ realizing the irreducible $k(v)H$-module of isomorphism class $\lambda$ (this is possible by Gyoja's work \citep[theorem 2.3]{Gyoja}) and consider the induced $k\Omega$-module $V_\lambda$ (in particular $\dim_k V_\lambda=d_\lambda$), set $V:=\bigoplus_\lambda V_\lambda$ and denote the associated representation $k\Omega\to\operatorname{End}_k(V)$ by $\omega$. By construction $\im(\omega)\subseteq\prod_\lambda k^{d_\lambda\times d_\lambda}$ holds. Now consider $k(v)V$ as a module for $k(v)H \subseteq k(v)\Omega$ by restriction. Because $k(v)V$ contains each irreducible module of the Hecke algebra exactly once $\omega(k(v)H) = \prod_\lambda k(v)^{d_\lambda\times d_\lambda}$ holds. By comparing dimensions we obtain the desired equality above.

\medbreak
The fourth item follows from this. On one hand a $k\Omega$-module $V$ is certainly simple if its restriction to a subalgebra is already simple. Because every simple $k(v)H$-module can be realized by a $W$-graph, choosing one $W$-graph for each isomorphism class induces an injective map $\Irr(W)\isomorphic\Irr(k(v)H) \to \Irr(k\Omega)$ with the restriction map as an left inverse. Because of $k\Omega/\rad(k\Omega)\isomorphic \prod_\lambda k^{d_\lambda\times d_\lambda}$ the number of elements in both sets is the same so that the map is actually a bijection.

Define $F^{\preceq\lambda} := \sum_{\mu\preceq\lambda} F^\mu$. By Z3 the right ideals $F^{\preceq\lambda} k\Omega$ are actually two-sided ideals. For any $k\Omega$-module $V$ define $V^{\preceq\lambda} := F^{\preceq\lambda} V$. This is a submodule of $V$ for all $\lambda$, $\lambda\preceq\mu \implies V^{\preceq\lambda}\leq V^{\preceq\mu}$ holds by construction and $F^\lambda$ acts as the identity on $V^{\preceq\lambda} / V^{\prec\lambda}$.

Now consider the equation $1=\sum_\lambda F^\lambda$. It shows that there must be at least one $\lambda$ with $F^\lambda V\neq 0$ if $V\neq 0$. A $\lambda$ that is $\preceq$-minimal with respect to this property satisfies $0\neq F^\lambda V = F^{\preceq\lambda} V$ so that $F^\lambda V=V$ follows if $V$ is simple. Therefore for each simple $k\Omega$-module $V$ there is exactly one $\lambda$ with $F^\lambda V=V$. Conversely $R^\lambda:=F^{\preceq\lambda} k\Omega / F^{\prec\lambda} k\Omega$ is a finite dimensional, non-zero $k\Omega$-module with $F^\lambda R^\lambda=R^\lambda$ so that for each $\lambda\in\Irr(W)$ there must be at least one simple $k\Omega$-module $V$ with $F^\lambda V=V$. This establishes another bijection between $\Irr(k\Omega)\to\Irr(W)$. By reindexing the $F^\lambda$ one can achieve that these two bijections are in fact the same so that $F^\lambda V=V$ holds if and only if the restriction of $k(v)V$ to $k(v)H$ is of isomorphism class $\lambda$.

\medbreak
Now consider again an arbitrary $V$ and the quotient $R^\lambda:=V^{\preceq\lambda}/V^{\prec\lambda}$. Because $F^\mu R^\lambda=0$ for all $\mu\neq\lambda$, the representation $k\Omega\to\operatorname{End}_k(W)$ must annihilate $F^\mu$ for all $\mu\neq\lambda$ and therefore all $F^{\kappa} X_{IJ} F^{\kappa'}$ with $\kappa\neq\kappa'$. Hence the representation vanishes on the radical and $R^\lambda$ is therefore semisimple. But again $F^\mu R^\lambda=0$ for all $\mu\neq\lambda$ so that the simple constituents of $R^\lambda$ must all lie in the isomorphism class $\lambda$.
\end{proof}

\section{Proving the decomposition conjecture}\label{section:proof_of_conjecture}

The rest of the paper is devoted to proving that the $W$-graph decomposition conjecture holds for Coxeter groups of types $I_2(m)$, $A_1$--$A_4$ and $B_3$. These proofs all proceed by the same pattern: The relations from Theorem \ref{theorem:Omega_relations} are used to find orthogonal decompositions $E_I=\sum_{\lambda\in\Irr(W)} F_I^\lambda$ of the vertex idempotents $E_I\in k\Omega$ into smaller idempotents $F_I^\lambda$ some of which may be zero. The idempotents $F^\lambda$ in the decomposition conjecture will then be obtained as $F^\lambda := \sum_I F_I^\lambda$.

These decompositions will be graphically represented as ``refinements'' of the compatibility graph $\mathcal{Q}_W$, i.e. the single vertex corresponding to $E_I$ will be ``split'' into up to $\abs{\Irr(W)}$ many vertices corresponding to the idempotents $F_I^\lambda$ (some of which might be zero) and similarly the edge corresponding to the element $X_{IJ}$ will be ``split'' into up to $\abs{\Irr(W)}^2$ many edges corresponding to the elements $F_I^\lambda X_{IJ} F_J^\mu$ most of which will also be zero.

Direct computations will be used to show that enough edge elements are zero to satisfy the decomposition conjecture.

\begin{remark}
A reviewer of this paper remarked that the computations in the rest of this paper feel like they are be instances of a general algorithmic approach to the question whether or not a particular Coxeter group satisfies the decomposition conjecture. I share this feeling, but to my frustration I have not been able to pin down such an algorithm and prove its correctness as of the time of this writing. Part of the complication stems from the fact that almost nothing useful about $\Omega$ is known to me in the absence of the decomposition conjecture. In particular it is hard to algorithmically decide whether or not an element is zero without having a nice, faithful representation of $\Omega$ at hand. Even proving finite dimensionality or even that the relations in Theorem \ref{theorem:Omega_relations} are a non-commutative Gröbner basis (and therefore the problem's amenability to certain general algorithms) is beyond my capabilities as of now.

If and when these problems get resolved, the lengthy calculations in this chapter may be replaced with a computer proof.
\end{remark}

\subsection{Auxiliary lemmas}

The first lemma which will be repeatedly used allows us to ``transport'' a decomposition into pairwise orthogonal idempotents from one $E_I$ to a adjacent $E_J$ in the compatibility graph and immediately recognize most of the possible new edge elements as zero.

\begin{definition}
In any algebra define a partial order on the set of idempotents by $e\leq f \iff e=ef=fe$.
\end{definition}

\begin{lemma}\label{lemma:idempotenttransport}
Let $I,J\subseteq S$ be arbitrary but fixed subsets. Let $A$ be a finite indexing set and $(e_\alpha)_{\alpha\in A}$ pairwise orthogonal idempotents $\leq E_I$ with $X_{IJ} X_{JI} = \sum_{\alpha\in A} \sigma_\alpha e_\alpha$ for some $\sigma_\alpha\in k^\times$. Denote the idempotent $E_I-\sum_\alpha e_\alpha$ by $e_0$. With these notations the following statements hold:
\begin{enumerate}
	\item $\widetilde{e}_\alpha := \sigma_\alpha^{-1} X_{JI} e_\alpha X_{IJ}$ and $\widetilde{e}_0:=E_J-\sum_{\alpha\in A} \widetilde{e}_\alpha$ are pairwise orthogonal idempotents $\leq E_J$.
	\item $X_{IJ} \widetilde{e}_\alpha = e_\alpha X_{IJ}$ and $X_{JI} e_\alpha = \widetilde{e}_\alpha X_{JI}$ for all $\alpha\in A\cup\Set{0}$.
	\item $r:=X_{JI} e_0 X_{IJ}$ satisfies $r^2=0$, $r = \widetilde{e}_0 r \widetilde{e}_0$ and $X_{JI} X_{IJ} = \sum_{\alpha\in A} \sigma_\alpha\widetilde{e}_\alpha+r$. In particular $r=0$ holds if $X_{JI} X_{IJ}$ is an idempotent itself.
	\item $X_{IJ} \widetilde{e}_\alpha X_{JI} = \sigma_\alpha e_\alpha$ for all $\alpha\in A$. In other words applying this construction twice gives back the original idempotents.
\end{enumerate}
\end{lemma}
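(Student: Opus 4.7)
The plan is to establish claim 2 first, since the intertwining identities $X_{IJ}\widetilde{e}_\alpha = e_\alpha X_{IJ}$ and $X_{JI}e_\alpha = \widetilde{e}_\alpha X_{JI}$ are the technical engine driving the other three claims. For $\alpha\in A$ these identities should fall out of a one-line computation using the hypothesis $X_{IJ}X_{JI} = \sum_\beta \sigma_\beta e_\beta$ and the orthogonality of the $e_\beta$; for instance $X_{IJ}\widetilde{e}_\alpha = \sigma_\alpha^{-1}(X_{IJ}X_{JI})e_\alpha X_{IJ} = e_\alpha X_{IJ}$ after $\sum_\beta \sigma_\beta e_\beta e_\alpha = \sigma_\alpha e_\alpha$. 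The $\alpha=0$ case then follows by summing over $\alpha\in A$ and subtracting from $X_{IJ}E_J = X_{IJ} = E_I X_{IJ}$, which holds thanks to the path-algebra identity $X_{IJ} = E_I X_{IJ} E_J$ from Lemma~\ref{lemma:Xi_path_alg}.

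With claim 2 in hand I would attack claim 1. The product $\widetilde{e}_\alpha\widetilde{e}_\beta = \sigma_\alpha^{-1}\sigma_\beta^{-1} X_{JI}e_\alpha(X_{IJ}X_{JI})e_\beta X_{IJ}$ collapses via orthogonality of the $e_\gamma$ to $\delta_{\alpha\beta}\widetilde{e}_\alpha$. The containment $\widetilde{e}_\alpha\leq E_J$ is immediate from $E_J X_{JI} = X_{JI}$ and $X_{IJ}E_J = X_{IJ}$, so that $\widetilde{e}_0 := E_J - \sum_\alpha\widetilde{e}_\alpha$ is automatically an idempotent orthogonal to all the $\widetilde{e}_\alpha$.

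For claim 3, inserting $E_I = e_0 + \sum_\alpha e_\alpha$ into $X_{JI}X_{IJ} = X_{JI}E_I X_{IJ}$ instantly yields the asserted decomposition $X_{JI}X_{IJ} = r + \sum_\alpha \sigma_\alpha\widetilde{e}_\alpha$. Applying claim 2 for $\alpha=0$ on each side of the definition of $r$ rewrites it as $r = \widetilde{e}_0(X_{JI}X_{IJ})\widetilde{e}_0$, giving both $r=\widetilde{e}_0 r\widetilde{e}_0$ and, after expanding $r^2 = X_{JI}e_0(X_{IJ}X_{JI})e_0 X_{IJ}$ and using $e_0 e_\beta = 0$, the nilpotence $r^2=0$. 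If $X_{JI}X_{IJ}$ is itself idempotent, I would square the decomposition, use $r^2 = 0 = r\widetilde{e}_\alpha = \widetilde{e}_\alpha r$ to obtain $\sum_\alpha\sigma_\alpha^2\widetilde{e}_\alpha = r + \sum_\alpha\sigma_\alpha\widetilde{e}_\alpha$, and compare components along the pairwise orthogonal idempotents $\widetilde{e}_\alpha,\widetilde{e}_0$ to conclude $r=0$ (and incidentally $\sigma_\alpha = 1$).

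Claim 4 is then a one-line telescoping: $X_{IJ}\widetilde{e}_\alpha X_{JI} = \sigma_\alpha^{-1}(X_{IJ}X_{JI})e_\alpha(X_{IJ}X_{JI}) = \sigma_\alpha e_\alpha$. I do not anticipate any genuine obstacle here; the entire lemma is bookkeeping around the conjugation identity and the path-algebra restrictions $X_{IJ} = E_I X_{IJ} E_J$, $X_{JI} = E_J X_{JI} E_I$. The only real discipline required is to prove claim 2 before the remaining three claims, so that the intertwining relations are available when computing products.
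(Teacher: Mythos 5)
Your proof is correct: every computation checks out against the hypotheses $X_{IJ}X_{JI}=\sum_\beta\sigma_\beta e_\beta$, the orthogonality of the $e_\beta$, and the path-algebra identities $E_IX_{IJ}=X_{IJ}=X_{IJ}E_J$, and your ordering (claim 2 first, then 1, 3, 4) makes the remaining verifications one-liners. The paper itself omits the argument, deferring to the author's thesis with the remark ``Easy,'' and your proposal is exactly the routine bookkeeping that remark alludes to.
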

\begin{proof}
All claims are easily verified by using the definition. For example
\begin{align*}
	\widetilde{e}_\alpha \widetilde{e}_\beta &= \sigma_\alpha^{-1} \sigma_\beta^{-1} X_{JI} e_\alpha X_{IJ} X_{JI} e_\beta X_{IJ} \\
	&= \sigma_\alpha^{-1} \sigma_\beta^{-1} X_{JI} e_\alpha (\sum_\gamma \sigma_\gamma e_\gamma) e_\beta X_{IJ} \\
	&= \sum_\gamma \frac{\sigma_\gamma}{\sigma_\alpha \sigma_\beta} X_{JI} e_\alpha e_\gamma e_\beta X_{IJ} \\
	&= \begin{cases} 
	0 & \alpha\neq \beta \\
	\widetilde{e}_\alpha & \alpha=\beta
	\end{cases}
\end{align*}
See \citep[Lemma 4.5.25]{hahn2013diss} for complete proofs of the other claims.
\end{proof}

\begin{definition}
In the above construction, the $\widetilde{e}_\alpha$ are said to be obtained by \emph{transporting idempotents from $I$ to $J$}. The $e_0$ and $\widetilde{e}_0$ are called \emph{leftover idempotents} of this transport.
\end{definition}

The following well-known result will also be used repeatedly to construct the morphisms $\psi_\lambda$ in conjecture Z4.
\begin{lemma}\label{lemma:matrix_alg_quiver}
The matrix algebra $k^{d\times d}$ is freely generated by the generators $\lbrace e_{ij} \mid 1\leq i,j\leq d, \abs{i-j}\leq 1\rbrace$ with respect to the relations
\[e_{ii} e_{jj} = \delta_{ij} e_{ii}, \quad 1=\sum_{i=1}^d e_{ii}, \quad e_{ii} e_{ij} e_{jj} = e_{ij} \quad\text{and}\quad e_{ij} e_{ji} = e_{ii}\]
\end{lemma}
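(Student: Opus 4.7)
The plan is to construct an explicit inverse-isomorphism between the algebra $A$ defined by the given presentation and the matrix algebra $k^{d\times d}$. The standard matrix units $E^{\mathrm{std}}_{ij}\in k^{d\times d}$ with $|i-j|\leq 1$ obviously satisfy the four listed relations, so by the universal property of the presentation there is a unique $k$-algebra map $\phi: A\to k^{d\times d}$ sending $e_{ij}\mapsto E^{\mathrm{std}}_{ij}$. The problem reduces to showing that $\phi$ is a bijection.

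The central construction is to build candidate matrix units $E_{ij}\in A$ for \emph{all} $1\leq i,j\leq d$, not only the ones adjacent on the diagonal. Set $E_{ij}:=e_{ij}$ for $|i-j|\leq 1$ and, for the remaining pairs, take the monotone concatenation along the linear quiver: $E_{ij}:=e_{i,i+1}e_{i+1,i+2}\cdots e_{j-1,j}$ for $i<j$, and symmetrically for $i>j$. I would then verify the full matrix unit relations $E_{ij}E_{kl}=\delta_{jk}E_{il}$ together with $\sum_i E_{ii}=1$. First observe the two auxiliary identities $e_{ii}e_{ij}=e_{ij}=e_{ij}e_{jj}$ (obtained by left/right multiplying $e_{ii}e_{ij}e_{jj}=e_{ij}$ by the appropriate diagonal idempotent), which immediately give the sandwich property $E_{ij}=e_{ii}E_{ij}e_{jj}$. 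The case $j\neq k$ is then handled by $E_{ij}E_{kl}=E_{ij}e_{jj}e_{kk}E_{kl}=0$. The case $j=k$ splits by whether the concatenation is monotone (immediate from the definitions) or whether it has a turning point; in the latter situation I collapse the peak iteratively via $e_{m,m+1}e_{m+1,m}=e_{mm}$ and absorb the resulting diagonal idempotent into its neighbour using the auxiliary identities, eventually obtaining $E_{il}$ after $|i-l|$ is reached by the remaining factors.

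Once the full matrix unit structure holds on $\{E_{ij}\}_{i,j=1}^d$, the conclusion is quick. Every element of $A$ is a $k$-linear combination of products of generators; each such product is a product of elements $E_{i_1 i_2}E_{i_2' i_3}E_{i_3' i_4}\cdots$, and by $E_{ij}E_{kl}=\delta_{jk}E_{il}$ every such product either vanishes or reduces to a single $E_{pq}$. Thus $\{E_{ij}\}$ spans $A$, giving $\dim_k A\leq d^2$. Conversely $\phi(E_{ij})=E^{\mathrm{std}}_{ij}$ holds by induction on the length of the defining concatenation, so $\phi$ is surjective onto $k^{d\times d}$, which has dimension $d^2$. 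Hence $\phi$ is an isomorphism and the presentation is a presentation of $k^{d\times d}$.

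The main obstacle is the case-analysis for $E_{ij}E_{jl}$ when the combined path $i\to j\to l$ is non-monotone: one has to show that the "detour" collapses cleanly to the monotone path $E_{il}$. This is pure bookkeeping with the two relations $e_{m,m+1}e_{m+1,m}=e_{mm}$ and $e_{mm}e_{m,m\pm 1}=e_{m,m\pm 1}$, but it is the only step that requires care, and an induction on the length of the turning-point suffices.
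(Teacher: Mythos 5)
The paper states this lemma as a well-known fact and gives no proof, so there is nothing to compare against; your argument is the standard one and it is correct. The construction of the full system of matrix units $E_{ij}$ from the tridiagonal generators, the collapse of non-monotone concatenations via $e_{m,m\pm1}e_{m\pm1,m}=e_{mm}$ together with the absorption identities $e_{ii}e_{ij}=e_{ij}=e_{ij}e_{jj}$, and the conclusion via spanning plus the universal map $\phi$ all go through. One small caveat: in this paper $k$ is a good \emph{ring}, not necessarily a field, so you should phrase the final step not as a dimension count but as you in fact essentially do --- the $E_{ij}$ span $A$ as a $k$-module and $\phi$ carries them to the standard matrix units, which form a free $k$-basis of $k^{d\times d}$, whence $\phi$ is bijective.
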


Note that this can be equivalently stated by saying that $k^{d\times d}$ is the quotient of the path algebra of the quiver

\begin{figure}[h]
\centering
\begin{tikzpicture}
\node[font=\scriptsize] (E1) at (0,0) {$1$};
\node[font=\scriptsize] (E2) at (2,0) {$2$};

\node (E3) at (4,0) {$\cdots$};

\node[font=\scriptsize] (E4) at (6,0) {$d-1$};
\node[font=\scriptsize] (E5) at (8,0) {$d$};

\path[->,bend right=15,font=\scriptsize]
	(E1) edge (E2)
	(E2) edge (E1)
	(E2) edge (E3)
	(E3) edge (E2)
	(E3) edge (E4)
	(E4) edge (E3)
	(E4) edge (E5)
	(E5) edge (E4);
\end{tikzpicture}
%\caption{A quiver for matrix algebras}%
%\label{fig:quiver_matrix_alg}%
\end{figure}

by the relations that declare every directed loop to be equal to (the idempotent corresponding to) its base point.

\bigbreak
While proving Z4 the surjectivity of the constructed morphisms will often be implied by the fact that $F^\lambda k\Omega F^\lambda$ is generated as a $k$-algebra by the elements $F_I^\lambda=F^\lambda E_I F^\lambda$ and $F^\lambda X_{IJ} F^\lambda$. This follows from the fact that $k\Omega$ is generated by the $E_I$ and $X_{IJ}$ together with the observation that Z1--Z3 implies that a product of the form
\[F^{\lambda_1} X_{I_1,I_2} \ldots X_{I_{k-1},I_k} F^{\lambda_k} = \sum_{\lambda_2,\ldots,\lambda_{k-1}} F^{\lambda_1} X_{I_1,I_2} F^{\lambda_2} \ldots F^{\lambda_{k-1}} X_{I_{k-1},I_k} F^{\lambda_k}\]
can only be non-zero if there are $\lambda_2,\ldots,\lambda_{k-1}$ with $F^{\lambda_j} X_{I_j,I_{j+1}} F^{\lambda_{j+1}}\neq 0$ for all $1\leq j<k$. By Z3 this implies that $\lambda_1\preceq\lambda_2\preceq\ldots\preceq\lambda_k$. Thus if $\lambda_1=\lambda_k=\lambda$, then all intermediate $\lambda_j$ must be equal to $\lambda$ as well so that $F^{\lambda} X_{I_1,I_2} \ldots X_{I_{k-1},I_k} F^{\lambda}$ is expressible as a product of elements of the form $F^\lambda X_{IJ} F^\lambda$ as claimed.

\subsection{Rank 1}

\begin{theorem}
The decomposition conjecture is true for all Coxeter groups $(W,S)$ of type $A_1\times\ldots\times A_1$.
\end{theorem}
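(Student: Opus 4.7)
The plan is to exploit the fact that type $A_1 \times \ldots \times A_1$ is the simplest possible setting: every pair of distinct generators $s,t \in S$ satisfies $m_{st} = 2$. By the remark preceding the corollary above, this forces $X_{IJ}^s = 0$ in $\Omega$ for every transversal edge in the compatibility quiver $\mathcal{Q}_W$, so $\Omega$ is already a quotient of the path algebra of the sub-quiver consisting only of inclusion edges. In other words, the surviving quiver is the Boolean lattice of subsets of $S$ equipped with all strict upward arrows $I \leftarrow J$ (whenever $J \subsetneq I$), and this quiver is acyclic.

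One then identifies $\Irr(W)$ with the power set of $S$ in the standard way: each $I \subseteq S$ gives a one-dimensional character $\lambda_I$ of $H$ sending $T_s \mapsto -v^{-1}$ for $s \in I$ and $T_s \mapsto v$ for $s \notin I$, and every irreducible of $H$ arises in this fashion. In particular $d_{\lambda_I} = 1$ for all $I$. The natural candidates for the conjecture are then $F^{\lambda_I} := E_I$, together with the reverse-inclusion partial order $\lambda_I \preceq \lambda_J :\iff J \subseteq I$ on $\Irr(W)$.

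The verification of (Z1)--(Z4) is then essentially mechanical. (Z1) is immediate from Lemma \ref{lemma:Xi_path_alg}, and (Z2) is trivial because all $E_J$ commute. For (Z3), any non-zero element of $E_I k\Omega E_J$ lies, via the surjection from the path algebra, in the $k$-span of directed paths from $J$ to $I$ in $\mathcal{Q}_W$; but all surviving edges go upward in the inclusion order, so such paths require $J \subseteq I$, i.e.\ $\lambda_I \preceq \lambda_J$. For (Z4), acyclicity of the surviving sub-quiver yields $E_I \IZ\mathcal{Q}_W E_I = \IZ E_I$, hence $E_I k\Omega E_I \subseteq k E_I$, and equality holds because $E_I \neq 0$ in $k\Omega$ by the earlier corollary. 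One may therefore take $\psi_{\lambda_I} \colon k \to kE_I$, $c \mapsto cE_I$, which is plainly surjective.

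There is no genuine obstacle to overcome in this case: the whole point is that when $m_{st} = 2$ for every $s \neq t$, Theorem \ref{theorem:Omega_relations} degenerates completely and the quiver-theoretic picture becomes transparent, so that the conjectural decomposition $F^\lambda$ can simply be read off from the path-algebra structure of $\Xi$ itself. This toy example foreshadows the overall strategy used in higher rank, where the $E_I$ must be split further into smaller idempotents and then transported between neighbouring vertices of $\mathcal{Q}_W$ by means of Lemma \ref{lemma:idempotenttransport}.
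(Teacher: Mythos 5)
Your proposal is correct and follows essentially the same route as the paper: since $m_{st}=2$ for all distinct $s,t$, the transversal edges vanish, the remaining quiver of inclusion edges is acyclic, and one takes $F^\lambda := E_{I_\lambda}$ with the (reverse-)inclusion order, after which (Z1)--(Z4) are immediate. The only cosmetic difference is that the paper phrases Z3 directly via $F^\lambda\Omega F^\mu\neq 0\implies I_\lambda\supseteq I_\mu$ rather than through paths in the quiver, and it works over $k=\IZ$ and deduces the result for all good rings, which your argument also supports.
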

\begin{proof}
Groups of this particular type have the property that all $s,t\in S$ commute. In particular there are no transversal edges in the compatibility graph but only inclusion edges so that $\mathcal{Q}_W$ is acyclic and the trivial decomposition $E_I=E_I$ is already sufficient to satisfy Z1 -- Z4.
%
%$\Irr(W)$ is in bijection with $\lbrace I \mid I\subseteq S\rbrace$ via $\lambda\mapsto I_\lambda:=\lbrace s\in S \mid \lambda(s)=-1\rbrace$. We therefore define $F^\lambda$ to be $E_{I_\lambda}$. With this definition Z1 and Z2 are true.
%
%From the structure of $\mathcal{Q}_W$ it follows $F^\lambda \Omega F^\mu \neq 0$ implies $I_\lambda \supseteq I_\mu$. The desired partial order on $\Irr(W)$ is therefore just the reverse inclusion order on the power set of $S$ so that Z3 is true as well.
%
%Z4 is true because $F^\lambda \Omega F^\lambda = E_{I_\lambda}\cdot\mathbb{Z}$ and therefore $\psi_\lambda: 1\mapsto E_{I_\lambda}$ is a surjective $\IZ$-algebra homomorphism $\IZ^{1\times 1}\to F^\lambda \Omega F^\lambda$.
%
%This proves the decomposition conjecture for the good ring $k=\IZ$ and thus it holds for all good rings.
\end{proof}

\subsection{Rank 2}

While good rings for $A_n$ and $B_n$ are easy to understand, the following lemma will be needed to establish the existence of certain elements in a good ring for Coxeter groups of $I_2(m)$ which will be used in the proof of the decomposition conjecture. Note that a good ring for $I_2(m)$ always contains $\IZ[2\cos(\tfrac{2\pi}{m}),\tfrac{1}{m}]$.

\begin{lemma}\label{lemma:cosines}
Let $m\in\IN_{\geq 1}$ and $k$ a good ring for $I_2(m)$. The following assertions are true:
\begin{enumerate}
	\item $2\cos(a\frac{2\pi}{m}), 4\cos(a\frac{\pi}{m})^2\in k$ for all $a\in\IZ$.
	\item $4\cos(a\frac{\pi}{m})^2\in k^\times$ for all $a\in\IZ\setminus \tfrac{m}{2}\IZ$.
	\item $4\cos(a\frac{\pi}{m})^2-4\cos(b\frac{\pi}{m})^2\in k^\times$ for all $1\leq a<b\leq\floor{\frac{m}{2}}$.
\end{enumerate}
\end{lemma}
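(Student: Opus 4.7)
The plan is to work in the cyclotomic ring $R := \IZ[\zeta, 1/m]$ where $\zeta := e^{2\pi i/m}$, prove the desired elements are units in $R$, and then descend to $k$. The descent is painless: $R$'s real subring is $\IZ[\zeta + \zeta^{-1}, 1/m] = \IZ[2\cos(\tfrac{2\pi}{m}), 1/m]$, which is contained in $k$ by hypothesis, so any real element of $R^\times$ automatically lies in $k^\times$. Part (1) then falls out of the Chebyshev recursion $2\cos((a+1)\theta) = 2\cos(\theta)\cdot 2\cos(a\theta) - 2\cos((a-1)\theta)$, which shows $2\cos(a\tfrac{2\pi}{m})$ is an integer polynomial in $2\cos(\tfrac{2\pi}{m})$, combined with the double-angle identity $4\cos^2(a\tfrac{\pi}{m}) = 2 + 2\cos(a\tfrac{2\pi}{m})$.

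The engine powering parts (2) and (3) is the classical cyclotomic unit fact: if $c\not\equiv 0\pmod m$, so that $\zeta^c$ is a primitive $m'$-th root of unity with $m'\mid m$ and $m'>1$, then the absolute norm $N(1-\zeta^c) = \Phi_{m'}(1)$ equals a prime $p$ (automatically with $p\mid m$) when $m' = p^s$ is a prime power and equals $1$ otherwise. In either case $N(1-\zeta^c)$ is a unit of $\IZ[1/m]$, so $1 - \zeta^c \in R^\times$. An analogous argument applied to $1 + \zeta^a = 1 - (-\zeta^a)$, together with a brief case analysis on $\ord(-\zeta^a)$ depending on the parities of $m$ and $m'$, yields $1 + \zeta^a \in R^\times$ whenever $\zeta^a \neq \pm 1$, i.e.\ whenever $a \notin \tfrac{m}{2}\IZ$; one checks that the relevant cyclotomic value is always of the form $\Phi_n(1) \in \{1\}\cup\{p : p\mid m\}$, so the good-ring hypothesis suffices. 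Part (2) is then immediate from the factorization $4\cos^2(\tfrac{\pi a}{m}) = (1+\zeta^a)(1+\zeta^{-a})$ and the descent above.

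For part (3), a direct calculation yields the factorization
\[4\cos^2(\tfrac{\pi a}{m}) - 4\cos^2(\tfrac{\pi b}{m}) = (\zeta^a+\zeta^{-a}) - (\zeta^b+\zeta^{-b}) = \zeta^a(1 - \zeta^{b-a})(1 - \zeta^{-(a+b)}).\]
The hypothesis $1\leq a<b\leq\lfloor m/2\rfloor$ guarantees $0 < b-a < m$ and $0 < a+b < m$, so both exponents are nonzero modulo $m$; hence both $1 - \zeta^{(\cdot)}$ factors lie in $R^\times$ by the engine, and the prefactor $\zeta^a$ is a root of unity and thus also a unit. Descent into $k$ as before concludes the proof. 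The main subtlety of the whole argument is the case check for $\ord(-\zeta^a)$ and the resulting verification that $\Phi_n(1)$ really only involves primes dividing $m$; once that elementary cyclotomic bookkeeping is in hand, everything else follows cleanly from the two factorizations above.
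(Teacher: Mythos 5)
Your proof is correct, but it takes a genuinely different route from the paper's. The paper's (sketched) argument never leaves the real subfield: it quotes the closed-form products $\prod_{1\leq a<m/2}(2\cos(a\tfrac{\pi}{m}))^2 = 1$ or $\tfrac{m}{2}$ and $\prod_{a=1}^{m-1}2\sin(a\tfrac{\pi}{m})=m$, observes that each factor lies in $k$ while the total product is a unit of $k$ (because $m\in k^\times$), and concludes that every individual factor is a unit; assertion (3) then follows from the product-to-sum identity $4\cos(a\tfrac{\pi}{m})^2-4\cos(b\tfrac{\pi}{m})^2 = 2\sin((a+b)\tfrac{\pi}{m})\cdot 2\sin((a-b)\tfrac{\pi}{m})$. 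You instead pass to $\IZ[\zeta,1/m]$ and certify each factor $1-\zeta^c$ separately via the norm computation $\Phi_n(1)\in\{1\}\cup\{p : p\mid n\}$, then descend to $k$ using the (standard, but worth citing) fact that $\IZ[\zeta]\cap\mathbb{R}=\IZ[\zeta+\zeta^{-1}]$. The two arguments are close cousins --- the paper's sine product is $\prod_{a=1}^{m-1}(1-\zeta^a)=m$ in disguise --- but yours verifies unithood factor by factor at the price of the descent step and the $\ord(-\zeta^a)$ parity bookkeeping, while the paper's extracts all the units at once from a single global identity and stays inside $k$ throughout. One harmless imprecision: the absolute norm of $1-\zeta^c$ from $\mathbb{Q}(\zeta)$ is $\Phi_{m'}(1)$ raised to the power $[\mathbb{Q}(\zeta):\mathbb{Q}(\zeta^c)]$, not $\Phi_{m'}(1)$ itself; since a power of a unit of $\IZ[1/m]$ is still a unit, the conclusion is unaffected. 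Your factorizations $(1+\zeta^a)(1+\zeta^{-a})=4\cos(a\tfrac{\pi}{m})^2$ and $\zeta^a(1-\zeta^{b-a})(1-\zeta^{-(a+b)})$ check out, and the range $1\leq a<b\leq\floor{\tfrac{m}{2}}$ does force $b-a$ and $a+b$ to be nonzero modulo $m$, so the argument is complete.
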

\begin{proof}
Set $\zeta_n := \exp(\frac{2\pi i}{n})$ for all $n\in\IN_{\geq 1}$. With this notation $2\cos(a\tfrac{2\pi}{n}) = \zeta_n^a+\zeta_n^{-a}$ holds. It follows from $T^a+T^{-a}\in\IZ[T+T^{-1}]$ that $2\cos(a\tfrac{2\pi}{n})\in k$ for all $a\in\IZ$. The fact that $4\cos(a\frac{\pi}{m})^2\in k$ follows from the double-angle formula $2\cos(\tfrac{\theta}{2})^2 = \cos(\theta)+1$.

\medbreak
The proofs of the second and third claim use that
\[\IZ[2\cos(\tfrac{2\pi}{n})] = \IZ[\zeta_n+\zeta_n^{-1}] \subseteq \IZ[\zeta_n] \subseteq \IZ[\zeta_{nl}]\]
are integral ring extensions for all $l\in\IN_{\geq 1}$ and integral extensions $R\subseteq S$ have the property $R\cap S^\times = R^\times$. Therefore it suffices to show that the elements are units in $\IZ[\zeta_{ml}, \frac{1}{m}]$ for some $l\in\IN_{\geq 1}$.

\medbreak
Step 1: $4\cos(a\tfrac{\pi}{m})^2$ is invertible for all $a\in\IZ \setminus \tfrac{m}{2} \IZ$.

This follows from
\[\prod_{1\leq a<\tfrac{m}{2}} (2\cos(a\tfrac{\pi}{m}))^2 = \begin{cases}
	1 & \textrm{if}\,2\nmid m \\
	\tfrac{m}{2} & \textrm{if}\,2\mid m
	\end{cases}\]
which is easily shown using $2\cos(a\tfrac{\pi}{m})=\zeta_{2m}^a+\zeta_{2m}^{-a}$.
%\begin{align*}
	%\prod_{1\leq a<\tfrac{m}{2}} (2\cos(a\tfrac{\pi}{m}))^2 &= \prod_{1\leq a<\tfrac{m}{2}} (\zeta_{2m}^a + \zeta_{2m}^{-a})^2 \\
	%&= \prod_{1\leq a <\tfrac{m}{2}} (\zeta_{2m}^{2a} + 1)\zeta_{2m}^{-a}\cdot\zeta_{2m}^a (1 + \zeta_{2m}^{-2a}) \\
	%&= \prod_{\substack{-\tfrac{m}{2} < a < \tfrac{m}{2} \\ a\neq 0}} (1+\zeta_m^a) \\
	%&= \begin{cases}
	%(-1)^{m-1} \prod_{\zeta^m=1, \zeta\neq 1} (-1-\zeta) & \textrm{if}\;\;2\nmid m \\
	%(-1)^{m-2} \prod_{\zeta^m=1, \zeta\neq\pm1} (-1-\zeta) & \textrm{if}\;\;2\mid m
	%\end{cases} \\
	%&= \begin{cases}
	%\left.\frac{X^m-1}{X-1}\right|_{X=-1} & \textrm{if}\;\;2\nmid m \\
	%\left.\frac{X^m-1}{X^2-1}\right|_{X=-1} & \textrm{if}\;\;2\mid m
	%\end{cases} \\
	%&= \begin{cases}
	%1 & \textrm{if}\,2\nmid m \\
	%\tfrac{m}{2} & \textrm{if}\,2\mid m
	%\end{cases}.
%\end{align*}
Therefore $4\cos(a\tfrac{\pi}{m})^2$ is invertible too.

\medbreak
Step 2: $2\sin(a\frac{\pi}{m})$ is invertible for all $a\in\IZ\setminus m\IZ$.

This follows from
\[\prod_{a=1}^{m-1} 2\sin(a\tfrac{\pi}{m}) = m\]
which similarly can be shown using $2\sin(a\tfrac{\pi}{m}) = \tfrac{1}{i}(\zeta_{2m}^a-\zeta_{2m}^{-a})$.
%\begin{align*}
	%\prod_{a=1}^{m-1} 2\sin(a\tfrac{\pi}{m}) &= \prod_{a=1}^{m-1} \tfrac{1}{i}(\zeta_{2m}^a-\zeta_{2m}^{-a}) \\
	%&= (-i)^{m-1} \prod_{a=1}^{m-1} \zeta_{2m}^a \cdot\prod_{a=1}^{m-1} (1-\zeta_{2m}^{-2a}) \\
	%&= (-i)^{m-1}\zeta_{2m}^{\tfrac{1}{2}m(m-1)} \prod_{a=1}^{m-1} (1-\zeta_m^a) \\
	%&= \left.\frac{X^m - 1}{X-1}\right|_{X=1} \\
	%&= m.
%\end{align*}
Hence all $2\sin(a\frac{\pi}{m})$ are units for $a\in\IZ\setminus m\IZ$. This then proves the third claim because $4\cos(a\tfrac{\pi}{m})^2 - 4\cos(b\tfrac{\pi}{m})^2 = 2\sin((a+b)\tfrac{\pi}{m})\cdot 2\sin((a-b)\tfrac{\pi}{m})$ holds.
\end{proof}

\begin{theorem}
Let $m$ be a natural number $\geq 3$. The decomposition conjecture is true for all Coxeter groups of type $I_2(m)$.
\end{theorem}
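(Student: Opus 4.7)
The plan is to construct the idempotents $F^\lambda$ explicitly, exploiting the minimal structure of the compatibility graph $\mathcal{Q}_W$ of $I_2(m)$, whose only non-inclusion features are the transversal pair $a := X_{\{s\},\{t\}}$ and $b := X_{\{t\},\{s\}}$ between the two singleton vertices. First I would apply Theorem \ref{theorem:Omega_relations} to extract a polynomial identity for $ab$ in $E_{\{s\}} k\Omega E_{\{s\}}$ from the braid relation: using the parity of $\tau_{m-1}$, the restriction of $E_I \tau_{m-1}(A) E_J$ to the block $I=J=\{s\}$ yields an equation $\hat\tau(ab)=0$ whose roots are $r_k := 4\cos^2(k\pi/m)$ for $k=1,\ldots,\lfloor(m-1)/2\rfloor$, together with an additional root $r_0 := 0$ when $m$ is even. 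Lemma \ref{lemma:cosines} guarantees that the nonzero $r_k$ are units in $k$ and that their pairwise differences are units, so Lagrange interpolation yields pairwise orthogonal idempotents $e_k \in E_{\{s\}} k\Omega E_{\{s\}}$ with $(ab)e_k = r_k e_k$ and $\sum_k e_k = E_{\{s\}}$.

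Next I would transport via Lemma \ref{lemma:idempotenttransport}: for $k \geq 1$ set $\tilde e_k := r_k^{-1} b e_k a$, and let $\tilde e_0 := E_{\{t\}} - \sum_{k\geq 1}\tilde e_k$ be the leftover. In the even-$m$ case, the anti-diagonal block of the same $(\alpha)$-relation reads $a\hat q(ba)=0$, where $\hat q(T)$ is (up to a unit) the Lagrange numerator at the root $0$; this forces the crucial vanishings $e_0 a = 0$ and dually $b e_0 = \tilde e_0 b = 0$. The decomposition is then: $F^{\mathrm{triv}} := E_\emptyset$, $F^{\mathrm{sign}} := E_{\{s,t\}}$, $F^{\rho_k} := e_k + \tilde e_k$ for $k=1,\ldots,\lfloor(m-1)/2\rfloor$ corresponding to the two-dimensional irreducibles, and, if $m$ is even, $F^{\chi^{(2)}} := e_0$, $F^{\chi^{(1)}} := \tilde e_0$ for the two additional linear characters. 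The total count matches $|\Irr(I_2(m))|$ in both parities.

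Axioms (Z1) and (Z2) then follow directly, since each $F^\lambda$ is a sum of pairwise orthogonal idempotents bounded by a single $E_I$ and $\sum_\lambda F^\lambda = E_\emptyset + E_{\{s\}} + E_{\{t\}} + E_{\{s,t\}} = 1$. The partial order for (Z3) has $\mathrm{sign}$ as minimum, $\mathrm{triv}$ as maximum, and all middle characters pairwise incomparable; its verification reduces to computing $F^\lambda k\Omega F^\mu$ for each pair. The key observations are that no edges of $\mathcal{Q}_W$ enter $\emptyset$ and none leave $\{s,t\}$, so these extremes behave as claimed; every element of $E_{\{s\}} k\Omega E_{\{s\}}$ is a polynomial in $ab$, so $(ab)e_k = r_k e_k$ with distinct $r_k$ forces $e_l k\Omega e_k = \delta_{lk} k e_k$; crossing paths give $e_k k\Omega \tilde e_l = \delta_{kl}\, k\, e_k a$ via $a\tilde e_k = e_k a$; and in the even-$m$ case $e_0 a = 0 = \tilde e_0 b$ kills all paths between $F^{\chi^{(1)}}$ and $F^{\chi^{(2)}}$ and between any $\chi^{(i)}$ and any $\rho_k$.

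Finally (Z4) is trivial for the 1D idempotents since $F^\lambda k\Omega F^\lambda = k F^\lambda$ in each case. For the 2D $\rho_k$ I would invoke Lemma \ref{lemma:matrix_alg_quiver} with $d=2$, defining $\psi_{\rho_k}: k^{2\times 2} \twoheadrightarrow F^{\rho_k} k\Omega F^{\rho_k}$ by $e_{11}\mapsto e_k$, $e_{22}\mapsto \tilde e_k$, $e_{12}\mapsto e_k a$, $e_{21}\mapsto r_k^{-1}\tilde e_k b$; the required relation $e_{12}e_{21} = e_{11}$ translates into $e_k (ab) e_k = r_k e_k$, which holds by construction, and surjectivity follows from the spanning computation above. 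The main obstacle I expect is establishing $e_0 a = 0$ (and its dual) in the even-$m$ case, since without it $F^{\chi^{(1)}}$ and $F^{\chi^{(2)}}$ would fail to be $\preceq$-incomparable and the partial order needed for (Z3) would collapse; this is precisely where one must exploit the block anti-diagonal part of $\tau_{m-1}(A)=0$ rather than just the diagonal block that drives the spectral decomposition.
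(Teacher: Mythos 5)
Your proposal is correct and follows essentially the same route as the paper: spectral decomposition of the loop $X_{1,2}X_{2,1}$ via the $(\alpha)$-relation and the polynomial $\widetilde{\tau}_{m-1}$ (with Lemma \ref{lemma:cosines} supplying invertibility of the eigenvalues and their differences), idempotent transport via Lemma \ref{lemma:idempotenttransport}, the extra zero eigenvalue in the even case accounting for the two additional linear characters, the vanishing $X_{1,2}F_{2,m/2}=0$ extracted from the off-diagonal part of the braid relation, and the $2\times2$-matrix surjections for Z4. The only cosmetic difference is that you define the idempotents under $E_{\{t\}}$ by transport while the paper writes them down by the symmetric Lagrange formula and then identifies them with the transported ones.
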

\begin{proof}
The idea of the proof is to use a ``spectral decomposition'' of the loops $X_{1,2}X_{2,1}$ and $X_{2,1}X_{1,2}$ and construct a refinement of the compatibility graph as in Figure \ref{fig:better_compatibility_graph_I2}.

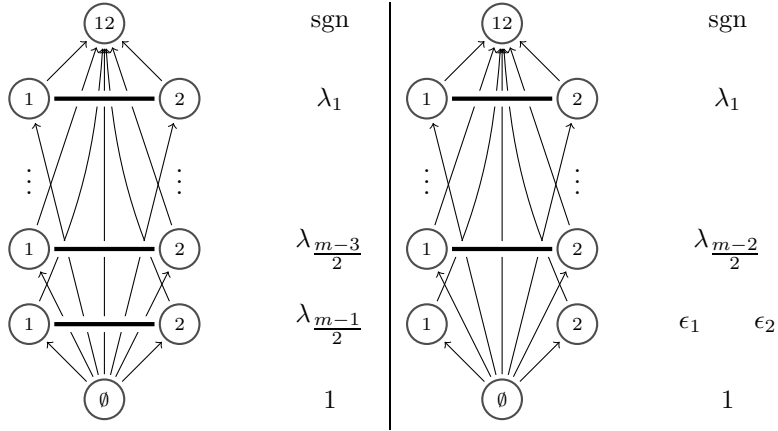
\begin{figure}[ht]
	\centering
	\begin{tabular}{c|c}
\begin{tikzpicture}[
		crossing line/.style = {preaction={draw=white,-,line width=6pt}}
	]

\node[Vertex] (E_empty) at (0,0) {$\emptyset$};

\node[Vertex] (F_1a) at (-1,1) {$1$};
\node[Vertex] (F_2a) at ( 1,1) {$2$};

\node[Vertex] (F_1b) at (-1,2) {$1$};
\node[Vertex] (F_2b) at ( 1,2) {$2$};

\node[Vertex] (F_1c) at (-1,4) {$1$};
\node[Vertex] (F_2c) at ( 1,4) {$2$};

\node[Vertex] (E_12) at (0,5) {$12$};

\node at (-1,3){$\vdots$};
\node at ( 1,3){$\vdots$};

\path[EdgeI]
	(E_empty)
		edge (F_1a)
		edge (F_2a)
		edge (F_1b)
		edge (F_2b)
		edge (F_1c)
		edge (F_2c)
		edge (E_12)
	(F_1a) edge[bend right=10] (E_12)
	(F_2a) edge[bend left=10] (E_12)
	(F_1b) edge (E_12)
	(F_2b) edge (E_12)
	(F_1c) edge (E_12)
	(F_2c) edge (E_12);
	
\path[EdgeT]
	(F_1a) edge[crossing line] (F_2a)
	(F_1b) edge[crossing line] (F_2b)
	(F_1c) edge[crossing line] (F_2c);

\node at (3,5) {$\sgn$};
\node at (3,4) {$\lambda_1$};
\node at (3,2) {$\lambda_{\tfrac{m-3}{2}}$};
\node at (3,1) {$\lambda_{\tfrac{m-1}{2}}$};
\node at (3,0) {$1$};

\end{tikzpicture}
		
		&
		
\begin{tikzpicture}[
	crossing line/.style = {preaction={draw=white,-,line width=6pt}}
	]

\node[Vertex] (E_empty) at (0,0) {$\emptyset$};

\node[Vertex] (F_1a) at (-1,1) {$1$};
\node[Vertex] (F_2a) at ( 1,1) {$2$};

\node[Vertex] (F_1b) at (-1,2) {$1$};
\node[Vertex] (F_2b) at ( 1,2) {$2$};

\node[Vertex] (F_1c) at (-1,4) {$1$};
\node[Vertex] (F_2c) at ( 1,4) {$2$};

\node[Vertex] (E_12) at (0,5) {$12$};

\node at (-1,3){$\vdots$};
\node at ( 1,3){$\vdots$};

\path[EdgeI]
	(E_empty)
		edge (F_1a)
		edge (F_2a)
		edge (F_1b)
		edge (F_2b)
		edge (F_1c)
		edge (F_2c)
		edge (E_12)
	(F_1a) edge[bend right=10] (E_12)
	(F_2a) edge[bend left=10] (E_12)
	(F_1b) edge (E_12)
	(F_2b) edge (E_12)
	(F_1c) edge (E_12)
	(F_2c) edge (E_12);
	
\path[EdgeT]
	(F_1b) edge[crossing line] (F_2b)
	(F_1c) edge[crossing line] (F_2c);

\node at (3,5) {$\sgn$};
\node at (3,4) {$\lambda_1$};
\node at (3,2) {$\lambda_{\tfrac{m-2}{2}}$};
\node at (2.5,1) {$\epsilon_1$};
\node at (3.5,1) {$\epsilon_2$};
\node at (3,0) {$1$};

\end{tikzpicture}

	\end{tabular}
	\caption{Refined compatibility graph for $I_2(m)$; left hand side for $m$ odd, right hand side for $m$ even.}
	\label{fig:better_compatibility_graph_I2}
\end{figure}

The next important observation is that there are only two transversal edges if the rank of $(W,S)$ is two, namely $X_{1,2}$ and $X_{2,1}$. Therefore the only relations in $\Omega$ of type $(\alpha)$ are
\[0 = \sum_{j=0}^{m-1} a_j \underbrace{X_{1,2} X_{2,1} \ldots}_{j}\quad\textrm{and}\quad 0 = \sum_{j=0}^{m-1} a_j \underbrace{X_{2,1} X_{1,2} \ldots}_{j}\]
where the $a_j$ are the coefficients of $\tau_{m-1}$.

\medbreak
Step 1: Preparations.

For all $n\in\IN$ define $\widetilde{\tau}_n\in\IZ[X]$ by
\[\widetilde{\tau}_n := \begin{cases} \tau_n(\sqrt{X}) & \textrm{if}\, 2\mid n \\ \tau_n(\sqrt{X})\sqrt{X} &\textrm{if}\, 2\nmid n\end{cases}.\]
Recall that $\tau_n$ is an even polynomial if $n$ is even and an odd polynomial if $n$ is odd. Therefore $\widetilde{\tau}_n$ really is a polynomial in $X$. It has degree $\ceil{\frac{n}{2}}$ and is monic. Since the $n$ zeros of $\tau_n$ are given by $2\cos(\frac{a}{n+1}\pi)$ for $a=1,\ldots,n$ (c.f. \citep[22.16]{abramowitz1964handbook}), the zeros of $\widetilde{\tau}_n$ are given by $4\cos(\tfrac{a}{n+1}\pi)^2$ for $a=1,\ldots,\ceil{\frac{n}{2}}$. In particular the zeros of $\widetilde{\tau}_{m-1}$ are equal to $\sigma_a:=4\cos(a\tfrac{\pi}{m})^2$ for $a=1,\ldots,\floor{\frac{m}{2}}$.

\medbreak
Step 2: Construction of the idempotents.

If $m$ is odd, then the $(\alpha)$-type relations are already of the form $\widetilde{\tau}_{m-1}(X_{1,2}X_{2,1})=0$ and $\widetilde{\tau}_{m-1}(X_{2,1}X_{1,2})=0$ respectively. If $m$ is even, then one can multiply the relation with $X_{1,2}$ and $X_{2,1}$ and obtain the same equations.

By defining
\begin{align*}
	F_{1,a} &:= \prod_{\substack{b=1,\ldots,\floor{\frac{m}{2}} \\ b\neq a}} \frac{X_{1,2} X_{2,1} - \sigma_b E_1}{\sigma_a - \sigma_b} \quad\textrm{and} \\
	F_{2,a} &:= \prod_{\substack{b=1,\ldots,\floor{\frac{m}{2}} \\ b\neq a}} \frac{X_{2,1} X_{1,2} - \sigma_b E_2}{\sigma_a - \sigma_b}
	\label{eq:I2:1}\tag{1}
\end{align*}
for all $a=1,\ldots,\floor{\frac{m}{2}}$ we get a set of pairwise orthogonal idempotents $F_{1,a}, F_{2,a}\in k\Omega$ with
\[E_1 = \sum_{a=1}^{\floor{\frac{m}{2}}} F_{1,a} \quad\textrm{and}\quad X_{1,2} X_{2,1} = \sum_{a=1}^{\floor{\frac{m}{2}}} \sigma_a F_{1,a} \quad\textrm{and}\]
\[E_2 = \sum_{a=1}^{\floor{\frac{m}{2}}} F_{2,a} \quad\textrm{and}\quad X_{2,1} X_{1,2} = \sum_{a=1}^{\floor{\frac{m}{2}}} \sigma_a F_{2,a}.\]

Denote the irreducible characters of $W(I_2(m))$ of degree two by $\lambda_a$ for $a=1,\ldots,\tfrac{m-1}{2}$ if $m$ is odd and $a=1,\ldots,\frac{m-2}{2}$ if $m$ is even. If $m$ is even there are two one-dimensional characters other than the trivial and the sign character which will be denoted by $\epsilon_1$ and $\epsilon_2$ respectively.

Now define the idempotents $(F^\lambda)_{\lambda\in\Irr(W)}$ as
\begin{align*}
	F^{1} &= E_\emptyset, \\
	F^{\lambda_a} &= F_{1,a}+F_{2,a}, \\
	F^{\operatorname{sgn}} &= E_{\Set{1,2}} \\
\intertext{and if $m$ is even, define further}
	F^{\epsilon_1} &= F_{1,\tfrac{m}{2}}\quad\textrm{and} \\
	F^{\epsilon_2} &= F_{2,\tfrac{m}{2}}.
\end{align*}
Now Z1 and Z2 hold by construction. It remains to verify Z3 and Z4.

\medbreak
Step 3: Proving Z3.

Now that we have the idempotents $F_{1,a}, F_{2,a}$ splitting $E_1$ and $E_2$ respectively, we can consider $\Omega$ as a quotient of the quiver which is obtained from $\mathcal{Q}_W$ by splitting the vertices labelled $\Set{1}$ and $\Set{2}$ into $\floor{\frac{m}{2}}$ vertices each. A priori this could lead to the edge elements $X_{1,2}$, $X_{2,1}$ being split into $\floor{\frac{m}{2}}^2$ new edge elements $F_{1,a} X_{1,2} F_{2,b}$ and $F_{2,a} X_{2,1} F_{1,b}$ respectively. We will show that this does not happen and instead all edge elements not depicted in Figure \ref{fig:better_compatibility_graph_I2} vanish.

This follows from Lemma \ref{lemma:idempotenttransport} because $F_{1,a}$ can be obtained from $F_{2,a}$ by idempotent transporting and vice versa: Note that $\sigma_a$ is invertible for $1\leq a<\frac{m}{2}$ and $\sigma_a=0$ for $a=\frac{m}{2}$. That means $F_{1,m/2}$ and $F_{2,m/2}$ are the leftover idempotents. The lemma for idempotent transporting can be applied. Now the following holds
\[\sum_{a=0}^{\floor{\tfrac{m}{2}}} \sigma_a X_{1,2} F_{2,a} X_{2,1} = X_{1,2} \Big(\underbrace{\sum_a \sigma_a F_{2,a}}_{=E_2}\Big) X_{2,1} = X_{1,2} X_{2,1} = \sum_{a=0}^{\floor{\tfrac{m}{2}}} \sigma_a F_{1,a}.\]
And because $X_{1,2} F_{2,a} X_{2,1}$ is an idempotent for $1\leq a<\frac{m}{2}$ both sides of the equation $\sum_a \sigma_a X_{1,2} F_{2,a} X_{2,1} = \sum_a \sigma_a F_{1,a}$ describe the spectral decomposition of $X_{1,2} X_{2,1}$. Since the $\sigma_a$ are pairwise distinct one obtains $F_{1,a} = X_{1,2} F_{2,a} X_{2,1}$ and for symmetry reasons $X_{2,1} F_{1,a} X_{1,2} = F_{2,a}$ for all $1\leq a<\frac{m}{2}$.

Now Lemma \ref{lemma:idempotenttransport} additionally implies $F_{1,a} X_{1,2} = X_{1,2} F_{2,a}$ so that $F_{1,a} X_{1,2} F_{2,b} = 0$ for $a\neq b$. And for symmetry reasons also $F_{2,a} X_{2,1} F_{1,b} = 0$ for $a\neq b$.

If $m$ is even, then it is also true that there are no edges $F_{1,m/2} \leftrightarrows F_{2,m/2}$. This can be seen as follows: By construction
\[\prod_{1\leq b<\frac{m}{2}} (X^2-\sigma_b) = \frac{\widetilde{\tau}_{m-1}(X^2)}{X^2} = \frac{\tau_{m-1}(X)}{X} = \sum_{j=1}^{m-1} a_j X^{j-1}\]
holds. By inserting $X_{2,1}X_{1,2}$ for $X^2$ and multiplying by $X_{1,2}$ this gives
\[X_{1,2} \prod_{1\leq b<\frac{m}{2}} (X_{2,1}X_{1,2}-\sigma_b) = \sum_{j=0}^{m-1} a_j \underbrace{X_{1,2} X_{2,1}\ldots}_{j\,\textrm{factors}} \overset{(\alpha)}{=} 0.\]
Now multiplication with the denominator of \eqref{eq:I2:1} gives $X_{1,2} F_{2,m/2} = 0$ so that there are no edges from $F_{2,m/2}$ to any vertex labelled with $\Set{1}$. And for symmetry reasons there can be no edge from $F_{1,m/2}$ to any vertex labelled with $\Set{2}$.

Therefore the only edges that can exist are edges $F_{1,a} \leftrightarrows F_{2,a}$, edges $\emptyset\to F_{i,a}$, the edge $\emptyset\to\Set{12}$ and edges $F_{i,a}\to\Set{12}$. This means Z3 is satisfied if we define a partial order on $\Irr(W)$ by declaring $\sgn$ as the top element, $1$ as the bottom element and all other elements as mutually incomparable.

\medbreak
Step 4: Proving Z4.

For the characters of degree 1 define $\psi_\lambda: k^{1\times 1}\to F^\lambda k\Omega F^\lambda$ by $\psi_\lambda(e_{11}):=F^\lambda$. This homomorphism is surjective because of the lack of closed loops based at $F^\lambda$ in the quiver displayed in Figure \ref{fig:better_compatibility_graph_I2}. Therefore $F^\lambda k\Omega F^\lambda = k\cdot F^\lambda$ holds and $\psi_\lambda$ is surjective.

For the characters of degree two define $\psi_{\lambda_a}: k^{2\times 2}\to F^{\lambda_a} k\Omega F^{\lambda_a}$ by
\[\begin{pmatrix} e_{11} & e_{12} \\ e_{21} & e_{22} \end{pmatrix} \mapsto \begin{pmatrix} F_{1,a} & F^{\lambda_a} X_{1,2} F^{\lambda_a} \\ \sigma_a^{-1} F^{\lambda_a} X_{2,1} F^{\lambda_a} & F_{2,a} \end{pmatrix}.\]
This is a well-defined algebra homomorphism by construction of $F^\lambda$. It is surjective because $F^{\lambda_a} k \Omega F^{\lambda_a}$ is generated by the elements $F_I^{\lambda_a}$ and $F^{\lambda_a} X_{IJ} F^{\lambda_a}$ all of which are contained in the image of $\psi_{\lambda_a}$.
\end{proof}

\subsection{Rank 3}

\begin{theorem}
The decomposition conjecture is true for type $A_3$.
\end{theorem}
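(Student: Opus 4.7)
Plan. The irreducible characters of $W \cong S_4$ are indexed by partitions of $4$, with degrees $1,3,2,3,1$ for $(4),(3,1),(2,2),(2,1,1),(1^4)$. Using the descent-set realization of the Kazhdan--Lusztig $W$-graphs via standard Young tableaux, the labelled vertex sets are $\{\emptyset\}$, $\{\{1\},\{2\},\{3\}\}$, $\{\{2\},\{1,3\}\}$, $\{\{1,2\},\{1,3\},\{2,3\}\}$, $\{\{1,2,3\}\}$ respectively. Thus only $E_{\{2\}}$ and $E_{\{1,3\}}$ need to be split nontrivially, each into two rank-one idempotents; for every other $I$ there is a unique $\lambda$ with a vertex of label $I$ and one sets $F_I^\lambda := E_I$, all other $F_I^\mu := 0$.

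The central computation is the splitting of $E_{\{1,3\}}$ by spectral decomposition of the loop $r := X_{\{1,3\},\{2\}} X_{\{2\},\{1,3\}}$. Since $\{1,3\}\setminus\{2\}$ has two elements, the $(\beta)$ relations at the edge $\{1,3\}\leftrightarrows\{2\}$ for $j=1,2,3$ yield nontrivial identities. Combined with the $(\beta)$ relations around the triangle of edges $\{1,3\}, \{1,2\}, \{2,3\}$, these should force $r$ to satisfy a quadratic polynomial $r^2 - \alpha r + \beta = 0$ whose discriminant is invertible in any good ring $k$; Lagrange interpolation then yields the spectral projectors $F^{(2,2)}_{\{1,3\}}$ and $F^{(2,1,1)}_{\{1,3\}}$. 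Applying Lemma \ref{lemma:idempotenttransport} to the edge $\{2\}\leftrightarrows\{1,3\}$ transports the $(2,2)$-projector to a nonzero $F^{(2,2)}_{\{2\}} \leq E_{\{2\}}$, while the transport of $F^{(2,1,1)}_{\{1,3\}}$ vanishes (no vertex of label $\{2\}$ exists in the $(2,1,1)$-graph; this is extracted from the triangle $\{1,2\},\{1,3\},\{2,3\}$). The leftover idempotent at $\{2\}$ is $F^{(3,1)}_{\{2\}}$.

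Setting $F^\lambda := \sum_I F_I^\lambda$, properties Z1 and Z2 hold by construction. For Z3 one introduces a suitable partial order on $\Irr(W)$ (a natural candidate is the dominance order on partitions, or alternatively a coarser order with the two one-dimensional characters at the extremes) and verifies $F^\lambda_I X_{IJ} F^\mu_J = 0$ for forbidden $(\lambda,\mu)$ edge by edge through the compatibility graph. The commutation $X_{IJ} F^\lambda_J = F^\lambda_I X_{IJ}$ from Lemma \ref{lemma:idempotenttransport}(2) handles $\{2\}\leftrightarrows\{1,3\}$; for the remaining four transversal edges, all of which have $|I\setminus J|=1$ on both sides, one transports the projectors from the neighbouring vertices and checks that their labels match consistently. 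For Z4 the degree-one characters are immediate; for $(2,2)$, $(3,1)$, $(2,1,1)$ one applies Lemma \ref{lemma:matrix_alg_quiver} along the $W$-graph path of labels (two vertices for $(2,2)$, three for $(3,1)$ and $(2,1,1)$), taking $e_{ii} := F^\lambda_{I_i}$ and $e_{i,i+1} := F^\lambda X_{I_i I_{i+1}} F^\lambda$ with the reverse arrows suitably rescaled. The defining relation $e_{i,i+1} e_{i+1,i} = e_{ii}$ then amounts to loop identities $X_{I_i I_{i+1}} X_{I_{i+1} I_i} = \sigma_i F^\lambda_{I_i}$ with $\sigma_i \in k^\times$.

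The principal obstacle is the explicit identification of the quadratic polynomial for $r$ and the verification that its discriminant is a unit in every good ring $k$; once this is settled, the transports, vanishings, and loop identities needed for Z3 and Z4 follow from Lemma \ref{lemma:idempotenttransport}(2)--(4) in an essentially mechanical way. A secondary subtlety is ensuring consistency of idempotent transport around non-trivial circuits in the compatibility graph (for instance $\{2\} \to \{1,3\} \to \{1,2\} \to \{1,3\} \to \{2\}$), but this is forced once the decomposition at $E_{\{1,3\}}$ is fixed and the transport across $\{2\}\leftrightarrows\{1,3\}$ has been pinned down.
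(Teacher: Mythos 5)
Your overall architecture matches the paper's: only $E_{\{2\}}$ and $E_{\{1,3\}}$ split (into $(3,1)/(2,2)$ and $(2,2)/(2,1^2)$ pieces respectively), all other $F_I^\lambda$ are $E_I$ or $0$, Z3 is checked edge by edge via idempotent transport, and Z4 comes from Lemma \ref{lemma:matrix_alg_quiver} along the chains of labels. The combinatorial bookkeeping (descent sets of standard Young tableaux) is also correct.

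However, the central step is left as an unresolved ``principal obstacle,'' and the route you sketch for it would not work as stated. You propose to extract a quadratic minimal polynomial for $r = X_{13,2}X_{2,13}$ from the $(\beta)$-relations at the edge $\{1,3\}\leftrightarrows\{2\}$ and around the triangle. The $(\beta)$-relations essentially only assert that the various $X_{IJ}^s$ for $s\in I\setminus J$ coincide (and the higher commutation identities between paths); they do not by themselves produce a polynomial identity for loops. The identities you need are the $(\alpha)$-relations: for $m_{st}=3$ one has $\tau_2(T)=T^2-1$, and the diagonal instances give $E_1 = X_{1,2}X_{2,1}$, $E_2 = X_{2,1}X_{1,2}+X_{2,13}X_{13,2} = X_{2,3}X_{3,2}+X_{2,13}X_{13,2}$, $E_{13}=X_{13,2}X_{2,13}+X_{13,12}X_{12,13}+X_{13,124}X_{124,13}$ (in $A_3$ without the last term), together with the off-diagonal instances $X_{1,2}X_{2,13}=0=X_{13,2}X_{2,1}$, etc. From these it follows immediately that $X_{2,1}X_{1,2}$, $X_{2,13}X_{13,2}$, $X_{13,2}X_{2,13}$, $X_{13,12}X_{12,13}$ are already orthogonal idempotents summing to $E_2$ resp.\ $E_{13}$ --- no spectral decomposition, quadratic discriminant, or division is needed. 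This matters because the minimal good ring for $A_3$ is $\IZ$, so a genuine Lagrange interpolation with a non-unit eigenvalue gap would be fatal; your plan only goes through because the ``quadratic'' degenerates to $r^2=r$, and you have not established that. Until the $(\alpha)$-relations are brought in and these product identities are verified, neither the construction of the $F_I^\lambda$ nor the vanishing statements needed for Z3 and Z4 are available.
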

We will not prove this in detail, since it is very similar to (although not formally a consequence of) the proof for $A_4$ which will be presented in the next section. Full details can also be found in \citep[Section 4.5]{hahn2013diss}.

\begin{theorem}
The decomposition conjecture is true for type $B_3$.
\end{theorem}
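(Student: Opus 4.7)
The plan is to imitate the $A_3$ proof, supplemented by the spectral decomposition trick from the $I_2(m)$ proof to accommodate the Dynkin bond of label $4$. Label the generators so that $m_{12}=3$, $m_{23}=4$, $m_{13}=2$. Since the compatibility graph $\mathcal{Q}_W$ depends only on the commutation pattern in the Dynkin diagram, it coincides with the compatibility graph of $A_3$ already depicted in Figure \ref{fig:wgraph_alg:comp_graphs}; what differs are the $(\alpha)$-relations attached to the $(s_2,s_3)$-bond. The ten irreducible characters of $W(B_3)$ are indexed by pairs of partitions $(\alpha,\beta)$ with $|\alpha|+|\beta|=3$ and have degrees $1,1,1,1,2,2,3,3,3,3$; the natural candidate for the partial order $\preceq$ on $\Irr(W(B_3))$ is the dominance order on bipartitions.

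First I would read off the $(\alpha)$-relations for each of the two bond labels. At the $m_{12}=3$ bond the polynomial $\tau_2(T)=T^2-1$ yields the same idempotent equations as in the $A_3$ proof, in particular relations expressing $E_I$ as a sum of loop-products of the form $X_{IJ}X_{JI}$ that split $E_{\{2\}}$ and $E_{\{1,3\}}$. At the $m_{23}=4$ bond the polynomial $\tau_3(T)=T^3-2T$ yields cubic equations on the alternating paths, which after appropriate manipulation give $(X_{IJ}X_{JI})^2=2\,X_{IJ}X_{JI}$ for loops adjacent to this bond. Since the only bad prime for $B_3$ is $2$, a good ring $k$ contains $\tfrac{1}{2}$, so the Lagrange interpolation formula from \eqref{eq:I2:1} produces an orthogonal splitting of each such loop into two idempotents corresponding to the eigenvalues $\sigma_1=2$ and $\sigma_2=0$. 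Applying Lemma \ref{lemma:idempotenttransport} iteratively along the transversal edges of $\mathcal{Q}_W$ propagates these splittings to obtain a consistent refinement $E_I=\sum_{\lambda\in\Irr(W)}F_I^\lambda$ at every vertex. Setting $F^\lambda:=\sum_I F_I^\lambda$ then makes Z1 and Z2 automatic.

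Z3 and Z4 follow the pattern of the $A_3$ proof. For Z3 one must check that $F^\lambda X_{IJ}F^\mu=0$ whenever $\lambda\neq\mu$; this reduces to explicit annihilation identities of the shape $X_{IJ}F_J^\mu=0$, each of which is a direct consequence of the appropriate $(\alpha)$- or $(\beta)$-relation, together with the identity $F^\lambda=\sum_I F^\lambda E_I$ which forces inclusion edges between different $\lambda,\mu$ to vanish. For Z4 one invokes Lemma \ref{lemma:matrix_alg_quiver}: after the construction, the refined subgraph supporting each $F^\lambda$ is a path on $d_\lambda$ vertices without closed loops, so a surjective morphism $\psi_\lambda:k^{d_\lambda\times d_\lambda}\twoheadrightarrow F^\lambda k\Omega F^\lambda$ is obtained by sending the standard path-algebra generators $e_{ii},e_{i,i+1},e_{i+1,i}$ of $k^{d_\lambda\times d_\lambda}$ to the corresponding $F_I^\lambda$ and suitably scaled transversal edges $F^\lambda X_{IJ}F^\lambda$, exactly as in the $A_3$ proof.

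The main obstacle is the combinatorial bookkeeping: ten irreducibles distributed over the eight compatibility-graph vertices (several of which split into three or four pieces), and the need to verify that the idempotents produced at a single vertex $I$ by three different mechanisms --- the $m=3$ relations from the $(s_1,s_2)$-bond, the $m=4$ spectral decomposition from the $(s_2,s_3)$-bond, and idempotent transport from neighbouring vertices --- refine to a common decomposition of $E_I$. The anti-automorphism $\delta$ from the corollary following the universal property mitigates some of this work by yielding left-right symmetries, but unlike for $A_3$ there is no non-trivial Dynkin diagram automorphism to exploit. Beyond this bookkeeping and the single new arithmetic ingredient $\tfrac{1}{2}\in k$, no genuinely new algebraic idea is required relative to the $A_3$ and $I_2(m)$ proofs.
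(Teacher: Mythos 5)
Your outline coincides with the paper's strategy (split the $E_I$ via $(\alpha)$-relations, propagate by Lemma \ref{lemma:idempotenttransport} and the antiautomorphism $\delta$, then read off Z3 and Z4 from the refined graph using Lemma \ref{lemma:matrix_alg_quiver}), and you correctly identify $\tfrac12\in k$ and the absence of a diagram automorphism as the relevant features. However, the one genuinely new computation you propose is wrong. At the $m_{st}=4$ bond the $(\alpha)$-relation is \emph{not} equivalent to $(X_{IJ}X_{JI})^2=2\,X_{IJ}X_{JI}$: writing the generators as $0,1,2$ with the $4$-bond at $\{0,1\}$, the relation reads
\[0 = X_{0,1}X_{1,0}X_{0,1} + X_{0,1}X_{1,02}X_{02,1} - 2X_{0,1},\]
and the middle term, a length-$3$ path detouring through $\{0,2\}$, has no counterpart in $I_2(4)$. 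As a consequence the loop $f:=X_{1,0}X_{0,1}$ satisfies $f^2+fF_1''-2f=0$ with $F_1'':=X_{1,02}X_{02,1}$, so its minimal polynomial is $T(T-1)(T-2)$, not $T(T-2)$: one first splits $E_1=F_1'+F_1''$ using the $3$-bond relations, checks that $f$ commutes with this splitting, and then finds $f'^2=2f'$ on $F_1'$ but $f''^2=f''$ on $F_1''$. In fact $f=2F_1^{(2),(1)}+1\cdot F_1^{(1),(2)}$, and the eigenvalue-$1$ eigenspace is exactly the piece carrying the degree-three character $\chi_{(1),(2)}$ (and, via $\delta$, its partner at $\{0,2\}$).

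A two-point Lagrange interpolation with $\sigma\in\{0,2\}$ as in \eqref{eq:I2:1} therefore produces the wrong idempotents: it would fail to separate $F_1^{(1),(2)}$ from the kernel part of $f$, so $E_1$ would not split into the required four orthogonal pieces and the count $\sum_\lambda d_\lambda=20$ over the refined graph could not come out. The fix is precisely the interaction you relegate to ``bookkeeping'': the $m=3$ splitting must be established \emph{first} and the $m=4$ relation must then be read relative to it. Your treatment of Z3/Z4 is also lighter than what is needed — for the degree-three components one must additionally kill or absorb inclusion edges \emph{inside} a component (e.g.\ $\{0\}\to\{0,2\}$ in the $((1),(2))$-component) using $(\beta)$-relations before Lemma \ref{lemma:matrix_alg_quiver} applies — but that part at least follows the $A_3$ template; the eigenvalue analysis above is the substantive gap.
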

\begin{proof}
We aim for a refinement of the compatibility graph as depicted in Figure \ref{fig:gyoja:better_compatibility_graph_B3} (where inclusions edges were again omitted for the sake of clarity). The relations of type $(\alpha)$ will be crucial for that undertaking. We will write $(\alpha^{st})$ to denote that we have used the relation of type $(\alpha)$ belonging to the edge $s - t$ of the Dynkin diagram.

\begin{figure}[htp]
\centering
\begin{tikzpicture}

\node[Vertex] (E_empty) at (0,0) {$\emptyset$};

\node[Vertex] (E_0a) at (-1.5,1) {$0$};

\node[Vertex] (E_1a) at ( 0.0,1) {$1$};
\node[Vertex] (E_2a) at (+1.5,1) {$2$};

\node[Vertex] (E_0b) at (-1.5,2) {$0$};
\node[Vertex] (E_1b) at ( 0.0,2) {$1$};
\node[Vertex] (E_2b) at (+1.5,2) {$2$};

\node[Vertex] (E_0c) at (-1.5,3) {$0$};
\node[Vertex] (E_1c) at (-0.5,3) {$1$};
\node[Vertex] (E_02c)at (-0.5,5) {$02$};

\node[Vertex] (E_1d) at ( 0.5,3) {$1$};
\node[Vertex] (E_02d)at ( 0.5,5) {$02$};
\node[Vertex] (E_12d)at ( 1.5,5) {$12$};

\node[Vertex] (E_01b) at (-1.5,6) {$01$};
\node[Vertex] (E_02b) at ( 0.0,6) {$02$};
\node[Vertex] (E_12b) at ( 1.5,6) {$12$};

\node[Vertex] (E_01a) at (-1.5,7) {$01$};
\node[Vertex] (E_02a) at ( 0.0,7) {$02$};

\node[Vertex] (E_12a) at ( 1.5,7) {$12$};

\node[Vertex] (E_123) at (0,8) {$012$};

\path[EdgeT]
	(E_1a) edge (E_2a)
	
	(E_0b) edge (E_1b)
	(E_1b) edge (E_2b)
	
	(E_0c) edge (E_1c)
	(E_1c) edge (E_02c)
	
	(E_1d) edge (E_02d)
	(E_02d) edge (E_12d)
	
	(E_01b) edge (E_02b)
	(E_02b) edge (E_12b)
	
	(E_01a) edge (E_02a);

\node (lambda_3_empty)   at (6,0) {$\ydiagram{3},\emptyset$};
\node (lambda_empty_3)   at (5,1) {$\emptyset,\ydiagram{3}$};
\node (lambda_21_empty)  at (7,1) {$\ydiagram{2,1},\emptyset$};
\node (lambda_2_1)       at (6,2) {$\ydiagram{2},\ydiagram{1}$};
\node (lambda_1_2)       at (5,4) {$\ydiagram{1},\ydiagram{2}$};
\node (lambda_11_1)      at (7,4) {$\ydiagram{1,1},\ydiagram{1}$};
\node (lambda_1_11)      at (6,6) {$\ydiagram{1},\ydiagram{1,1}$};
\node (lambda_empty_21)  at (5,7) {$\emptyset,\ydiagram{2,1}$};
\node (lambda_111_empty) at (7,7) {$\ydiagram{1,1,1},\emptyset$};
\node (lambda_empty_111) at (6,8) {$\emptyset,\ydiagram{1,1,1}$};

\end{tikzpicture}
	\caption{Refined compatibility graph of $B_3$}
	\label{fig:gyoja:better_compatibility_graph_B3}
\end{figure}
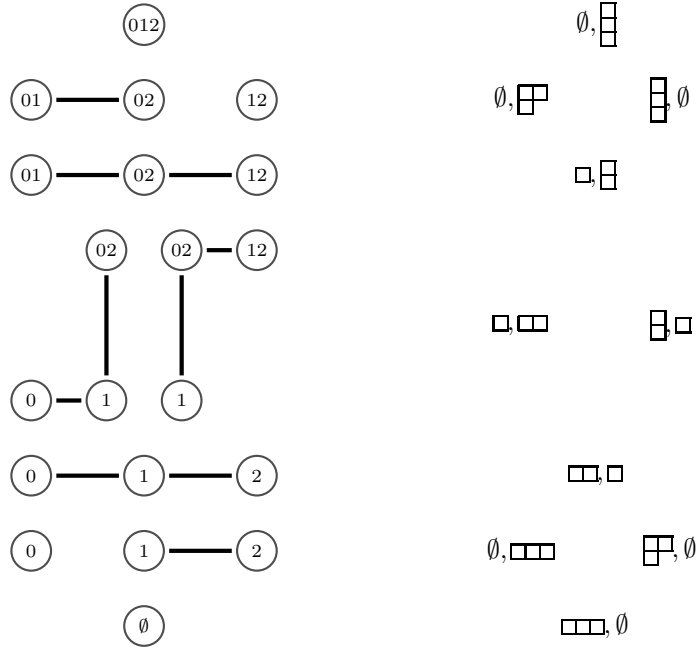

First note that every good ring for $B_3$ contains $\IZ[\tfrac{1}{2}]$ so that one is allowed to divide by two.

\medbreak
Step 1: Z1 and Z2.

We define elements $F_I^{\lambda,\mu}$ for all $I\subseteq S$ and all $(\lambda,\mu)\in\Irr(W)$ according to Table \ref{table:idempotents_B3} where absent entries are understood to be defined as zero. We will therefore prove that the $F_I^{\lambda,\mu}$ are pairwise orthogonal idempotents with $E_I=\sum_{\lambda,\mu} F_I^{\lambda,\mu}$.

\begin{table}[htp]
\setlength{\extrarowheight}{1em}
\ytableausetup{boxsize=0.3em}

\hspace{-10pt}
\footnotesize
\begin{tabular}{|MMMM|}
\hline

\multicolumn{4}{|M|}{E_{012}} \\

\hline

X_{01,02} F_{02}^{\emptyset,\ydiagram{2,1}} X_{02,01} & \multicolumn{2}{M|}{X_{02,01} X_{01,02} - F_{02}^{\ydiagram{1},\ydiagram{1,1}}} & E_{12} - F_{12}^{\ydiagram{1},\ydiagram{1,1}} - F_{12}^{\ydiagram{1,1},\ydiagram{1}} \\

\hline

X_{01,02} F_{02}^{\ydiagram{1},\ydiagram{1,1}} X_{02,01} & \multicolumn{2}{M}{\tfrac{1}{2} X_{02,12} X_{12,02}\cdot X_{02,01} X_{01,02}} & X_{12,02} F_{02}^{\ydiagram{1},\ydiagram{1,1}} X_{02,01} \\

\hline

& \multicolumn{1}{M|}{X_{02,1}X_{1,02} - F_{02}^{\ydiagram{1,1},\ydiagram{1}}} & X_{02,1} X_{1,02} \cdot X_{02,12} X_{12,02} & X_{12,02} F_{02}^{\ydiagram{1,1},\ydiagram{1}} X_{02,12} \\
X_{0,1} F_1^{\ydiagram{1},\ydiagram{2}} X_{1,0} & \multicolumn{1}{M|}{X_{1,0}X_{0,1} \cdot X_{1,02} X_{02,1}} & X_{1,02} X_{02,1} - F_1^{\ydiagram{1},\ydiagram{2}} & \\

\hline

X_{0,1} F_1^{\ydiagram{2},\ydiagram{1}} X_{1,0} & \multicolumn{2}{M}{\tfrac{1}{2} X_{1,0} X_{0,1} \cdot X_{1,02} X_{02,1}} & X_{2,1} F_1^{\ydiagram{2},\ydiagram{1}} X_{1,2} \\

\hline

E_0 - F_0^{\ydiagram{1},\ydiagram{2}} - F_0^{\ydiagram{2},\ydiagram{1}} & \multicolumn{2}{|M}{X_{1,0}X_{0,1} - F_1^{\ydiagram{1},\ydiagram{2}}} & X_{2,1} F_1^{\ydiagram{2},\ydiagram{1}} X_{1,2} \\

\hline

\multicolumn{4}{|M|}{E_\emptyset} \\

\hline
\end{tabular}
\caption{Expressions for the vertex idempotents of the refined compatibility graph of $B_3$ arranged in the same positions as the vertices in Figure \ref{fig:gyoja:better_compatibility_graph_B3}.}
\label{table:idempotents_B3}
\end{table}

The $(\alpha^{21})$-relation $E_2 = X_{2,1} X_{1,2}$ implies that $F_1' := X_{1,2} X_{2,1}$ is an idempotent $\leq E_1$. The $(\alpha^{12})$-relation
\[E_1 = X_{1,2} X_{2,1} + X_{1,02} X_{02,1}\]
implies that $F_1'':= X_{1,02} X_{02,1}$ also is an idempotent $\leq E_1$ which is orthogonal to $F_1'$. These two idempotents will be decomposed further.

\medbreak
Recall that relations of type $(\alpha)$ use the polynomials $\tau_{m-1}$ which for $m=4$ has the form $\tau_{4-1}(T) = T^3-2T$. Therefore $(\alpha^{01})$ and $(\alpha^{10})$ imply:
\begin{align}
	0 &= X_{0,1} X_{1,0} X_{0,1} + X_{0,1} X_{1,02} X_{02,1} - 2X_{0,1} \label{eq:gyoja_B3:1}\tag{1}\\
	0 &= X_{1,0} X_{0,1} X_{1,0} + X_{1,02} X_{02,1} X_{1,0} - 2X_{1,0} \label{eq:gyoja_B3:2}\tag{2}
\end{align}
By setting $f:=X_{1,0} X_{0,1}$, multiplying the first equation by $X_{1,0}$ from the left and the second with $X_{0,1}$ from the right we obtain:
\begin{align}
	0 &= f^2 + f F_1'' - 2f \label{eq:gyoja_B3:3}\tag{3}\\
	0 &= f^2 + F_1'' f - 2f \label{eq:gyoja_B3:4}\tag{4}
\end{align}
Thus
\[f'' := f F_1'' = F_1'' f \quad\text{and}\quad f':= f F_1' = F_1' f \]
are idempotents. We multiply \eqref{eq:gyoja_B3:3} with $F_1''$ and \eqref{eq:gyoja_B3:4} with $F_1'$ and obtain:
\begin{align}
	0 &= f''^2 - f'' \label{eq:gyoja_B3:5}\tag{5}\\
	0 &= f'^2 - 2f'  \label{eq:gyoja_B3:6}\tag{6}
\end{align}

This gives us the following decomposition into orthogonal idempotents:
\begin{align}
	E_1 = F_1' + F_1'' = \big(\underbrace{\tfrac{1}{2}f'}_{=F_1^{\ydiagram{2},\ydiagram{1}}}\big) + \big(\underbrace{F_1' - \tfrac{1}{2}f'}_{=F_1^{\ydiagram{2,1},\emptyset}}\big) + \big(\underbrace{f''}_{=F_1^{\ydiagram{1},\ydiagram{2}}}\big) + \big(\underbrace{F_1'' - f''}_{=F_1^{\ydiagram{1,1},\ydiagram{1}}}\big)
	\label{eq:gyoja_B3:idempotentsF1}\tag{7}
\end{align}
With these notations $X_{1,0}X_{0,1} = 2F_1^{\ydiagram{2},\ydiagram{1}} + F_1^{\ydiagram{1},\ydiagram{2}}$ holds. We see that the other idempotents are now related either by transporting of idempotents along $\Set{1}\to\Set{0}$ or $\Set{1}\to\Set{2}$ or by applying the antiautomorphism $\delta$ to previously constructed elements. In particular: The $F_I^{\lambda,\mu}$ defined in Table \ref{table:idempotents_B3} are pairwise orthogonal idempotents.

\bigbreak
Step 3: Verifying Z3.

We will check that in Figure \ref{fig:gyoja:better_compatibility_graph_B3} only ``upward'' edges appear so that the partial ordering on $\Irr(W)$ can be read off from the picture. We will in fact show that the only edges not depicted in Figure \ref{fig:gyoja:better_compatibility_graph_B3} are inclusion edges.

The following holds:
\begin{align*}
	X_{0,1} F_1^{\ydiagram{2,1},\emptyset} &= X_{0,1} (F_1' - \tfrac{1}{2} f') \\
	&= X_{0,1} (E_1 - \tfrac{1}{2} X_{1,0}X_{0,1}) F_1' \\
	&= \tfrac{1}{2} (2 X_{0,1} - X_{0,1} X_{1,0} X_{0,1}) F_1' \\
	&\overset{\text{\eqref{eq:gyoja_B3:1}}}{=} \tfrac{1}{2} (X_{0,1} F_1'') F_1' \\
	&= 0 \\
	X_{0,1} F_1^{\ydiagram{1,1},\ydiagram{1}} &= X_{0,1} (F_1'' - f'') \\
	&= X_{0,1} (E_1 - X_{1,0}X_{0,1}) F_1'' \\
	&= (X_{0,1} - X_{0,1} X_{1,0} X_{0,1}) F_1'' \\
	&\overset{\text{\eqref{eq:gyoja_B3:1}}}{=} (X_{0,1} F_1'' - X_{0,1}) F_1'' \\
	&= 0  
\end{align*}
This means that there cannot be edges from $F_1^{\ydiagram{2,1},\emptyset}$ or $F_1^{\ydiagram{1,1},\ydiagram{1}}$ to vertices labelled with $\Set{0}$. Since the idempotentes labelled by $\Set{0}$ were defined by transport of idempotents it follows from Lemma \ref{lemma:idempotenttransport} that $F_0^{\emptyset,\ydiagram{3}}X_{0,1} = 0$ holds, i.e. there are no edges from vertices labelled with $\Set{1}$ to $F_0^{\emptyset,\ydiagram{3}}$.

Analogously both $F_1^{\ydiagram{2,1},\emptyset} X_{1,0} = 0$ and $F_1^{\ydiagram{1,1},\ydiagram{1}} X_{1,0} = 0$ also hold.
%\begin{align*}
	%F_1^{\ydiagram{2,1},\emptyset} X_{1,0} &= (F_1' - \tfrac{1}{2} f') X_{1,0}\\
	%&= F_1' (E_1 - \tfrac{1}{2} X_{1,0}X_{0,1}) X_{1,0} \\
	%&= \tfrac{1}{2} F_1' (2 X_{1,0} - X_{1,0} X_{0,1} X_{1,0}) \\
	%&\overset{\text{\eqref{eq:gyoja_B3:2}}}{=} \tfrac{1}{2} F_1' (F_1'' X_{1,0}) \\
	%&= 0 \\
	%F_1^{\ydiagram{1,1},\ydiagram{1}} X_{1,0} &= (F_1'' - f'') X_{1,0} \\
	%&= F_1'' (E_1 - X_{1,0}X_{0,1}) X_{1,0} \\
	%&= F_1'' (X_{1,0} - X_{1,0} X_{0,1} X_{1,0}) \\
	%&\overset{\text{\eqref{eq:gyoja_B3:2}}}{=} F_1'' (F_1'' X_{1,0} - X_{1,0})\\
	%&= 0
%\end{align*}
Therefore there cannot be edges from vertices labelled with $\Set{0}$ to $F_1^{\ydiagram{2,1},\emptyset}$ or $F_1^{\ydiagram{1,1},\ydiagram{1}}$. Again it follows from \ref{lemma:idempotenttransport} that $X_{1,0}F_0^{\emptyset,\ydiagram{3}} = 0$ holds, i.e. there are no edges from $F_0^{\emptyset,\ydiagram{3}}$ to vertices labelled with $\Set{1}$.

Because the idempotents $F_0^{\ydiagram{1},\ydiagram{2}}$ and $F_0^{\ydiagram{2},\ydiagram{1}}$ were defined by transport of idempotents there are no edges $F_0^{\ydiagram{1},\ydiagram{2}} \leftrightarrows F_1^{\ydiagram{2},\ydiagram{1}}$ or $F_0^{\ydiagram{2},\ydiagram{1}} \leftrightarrows F_1^{\ydiagram{1},\ydiagram{2}}$. Similarly there are no edges $F_2^{\ydiagram{2,1},\emptyset} \leftrightarrows F_1^{\ydiagram{2},\ydiagram{1}}$ or $F_2^{\ydiagram{2},\ydiagram{1}} \leftrightarrows F_1^{\ydiagram{2,1},\emptyset}$.

Now we use the symmetry given by $\delta$ and obtain the same result for vertices labelled with $\Set{01},\Set{02},\Set{12}$.

It remains to verify that there are no edges ${F_1^{\ydiagram{1},\ydiagram{2}} \leftrightarrows F_{02}^{\ydiagram{1,1},\ydiagram{1}}}$ or ${F_1^{\ydiagram{1,1},\ydiagram{1}} \leftrightarrows F_{02}^{\ydiagram{1},\ydiagram{2}}}$. To this end we prove that the idempotents $F_{02}^{\ydiagram{1,1},\ydiagram{1}}$ and $F_{02}^{\ydiagram{1},\ydiagram{2}}$ are also given by an transport of idempotents. This follows from an application of the $(\alpha^{10})$-relation:
\begin{align*}
	0 &= X_{02,1} X_{1,0} X_{0,1} + X_{02,1} X_{1,02} X_{02,1} + X_{02,12} X_{12,02} X_{02,1} - 2X_{02,1}  \label{eq:gyoja_B3:8}\tag{8}
\end{align*}
Multiplying with $X_{1,02}$ from the left and using $X_{1,0}X_{0,1} = F_1^{\ydiagram{1},\ydiagram{2}} + 2 F_1^{\ydiagram{2},\ydiagram{1}}$ as well as $X_{02,12} X_{12,02} =F_{02}^{\ydiagram{1,1},\ydiagram{1}} + 2F_{02}^{\ydiagram{1},\ydiagram{1,1}}$ and $F_{02}'':=X_{02,1}X_{1,02}=\delta(F_1'')$ we obtain:
\begin{align*}
	0 &= X_{02,1} ( F_1^{\ydiagram{1},\ydiagram{2}} + 2F_1^{\ydiagram{2},\ydiagram{1}} ) X_{1,02} + F_{02}'' F_{02}'' + (F_{02}^{\ydiagram{1,1},\ydiagram{1}} + 2F_{02}^{\ydiagram{1},\ydiagram{1,1}})F_{02}'' - 2F_{02}'' \\
	&= X_{02,1} ( F_1^{\ydiagram{1},\ydiagram{2}} + 2F_1^{\ydiagram{2},\ydiagram{1}} ) X_{1,02} + F_{02}^{\ydiagram{1,1},\ydiagram{1}} F_{02}''+ 2\smash{\underbrace{F_{02}^{\ydiagram{1},\ydiagram{1,1}}F_{02}''}_{=0}} - F_{02}'' \\
	&= X_{02,1} ( F_1^{\ydiagram{1},\ydiagram{2}} + 2F_1^{\ydiagram{2},\ydiagram{1}} ) X_{1,02} + F_{02}^{\ydiagram{1,1},\ydiagram{1}} - F_{02}'' \\
	&= X_{02,1} ( F_1^{\ydiagram{1},\ydiagram{2}} + 2F_1^{\ydiagram{2},\ydiagram{1}} ) X_{1,02} + (-F_{02}^{\ydiagram{1},\ydiagram{2}})
\end{align*}
Hence we obtain
\[	F_{02}^{\ydiagram{1},\ydiagram{2}} = X_{02,1} F_1^{\ydiagram{1},\ydiagram{2}} X_{02,1} + 2\cdot X_{02,1} F_1^{\ydiagram{2},\ydiagram{1}} X_{1,02}.\]
The $(\alpha^{21})$-relation
\[ 0 = X_{02,1} X_{1,2}\]
implies $X_{02,1} F_1' = 0$ so that $X_{02,1} F_1^{\ydiagram{2},\ydiagram{1}} = 0$, because $F_1^{\ydiagram{2},\ydiagram{1}}\leq F_1'$. Therefore we obtain
\[ F_{02}^{\ydiagram{1},\ydiagram{2}} =  X_{02,1} F_1^{\ydiagram{1},\ydiagram{2}} X_{1,02}\]
i.e. $F_{02}^{\ydiagram{1},\ydiagram{2}}$ is a transported idempotent along the edge $\Set{02}\leftrightarrows\Set{1}$. Because of $F_1'' = F_1^{\ydiagram{1},\ydiagram{2}} + F_1^{\ydiagram{1,1},\ydiagram{1}}$ we also obtain
\[ F_{02}^{\ydiagram{1,1},\ydiagram{1}} =  X_{02,1} F_1^{\ydiagram{1,1},\ydiagram{1}} X_{1,02}\]
which together with Lemma \ref{lemma:idempotenttransport} implies that there are no edges other than the ones displayed in \ref{fig:gyoja:better_compatibility_graph_B3} between vertices labelled with $\Set{1}$ and $\Set{02}$. This shows that Z3 holds.

\bigbreak
Step 4: Verifying Z4.

There is not much to do for the characters of degree one. We define $\psi_{\lambda,\mu}: \IZ[\tfrac{1}{2}]^{1\times 1}\to F^{\lambda,\mu} \IZ[\tfrac{1}{2}]\Omega F^{\lambda,\mu}$ to be the only possible morphism, namely $\psi_{\lambda,\mu}(e_{11}):=F^{\lambda,\mu}$. The surjectivity of these maps is automatic because the four components for the one dimensional characters in the refined compatibility graph have no edges and therefore $F^{\lambda,\mu} \IZ[\tfrac{1}{2}]\Omega F^{\lambda,\mu} = \IZ[\tfrac{1}{2}] F^{\lambda,\mu}$.

\medbreak
\begin{table}[htp]
\ytableausetup{boxsize=0.3em}

\centering
\newcolumntype{L}{>{$}l<{$}}   %% horizontally and vertically centered

\footnotesize
\begin{tabular}{LMML}
\text{Character }\chi_{\lambda,\mu} & \text{Map }\psi_{\lambda,\mu} \\
\hline
\chi_{\ydiagram{2,1},\emptyset}  & 
	\begin{pmatrix}
	e_{11} & e_{12} \\ e_{21} & e_{22}	
	\end{pmatrix}&\mapsto&\begin{pmatrix}
		F_1^{\ydiagram{2,1},\emptyset} & X_{1,2}^{\ydiagram{2,1},\emptyset} \\
		X_{2,1}^{\ydiagram{2,1},\emptyset} & F_2^{\ydiagram{2,1},\emptyset}
	\end{pmatrix} \\
\chi_{\emptyset,\ydiagram{2,1}} &
	\begin{pmatrix}
	e_{11} & e_{12} \\ e_{21} & e_{22}	
	\end{pmatrix}&\mapsto&\begin{pmatrix}
		F_{02}^{\emptyset,\ydiagram{2,1}} & -X_{02,01}^{\emptyset,\ydiagram{2,1}} \\
		-X_{01,02}^{\emptyset,\ydiagram{2,1}} & F_{01}^{\emptyset,\ydiagram{2,1}}
	\end{pmatrix} \\
\chi_{\ydiagram{2},\ydiagram{1}} &
	\begin{pmatrix}
	e_{11} & e_{12} & \\ e_{21} & e_{22} & e_{23} \\ & e_{32} & e_{33}
	\end{pmatrix}&\mapsto&\begin{pmatrix}
		F_0^{\ydiagram{2},\ydiagram{1}} & X_{0,1}^{\ydiagram{2},\ydiagram{1}} & \\
		\tfrac{1}{2}X_{1,0}^{\ydiagram{2},\ydiagram{1}} & F_1^{\ydiagram{2},\ydiagram{1}} & X_{1,2}^{\ydiagram{2},\ydiagram{1}} \\
		& X_{2,1}^{\ydiagram{2},\ydiagram{1}} & F_2^{\ydiagram{2},\ydiagram{1}}
	\end{pmatrix} \\
\chi_{\ydiagram{1},\ydiagram{1,1}} &
	\begin{pmatrix}
	e_{11} & e_{12} & \\ e_{21} & e_{22} & e_{23} \\ & e_{32} & e_{33}
	\end{pmatrix}&\mapsto&\begin{pmatrix}
		F_{12}^{\ydiagram{1},\ydiagram{1,1}} & -X_{12,02}^{\ydiagram{1},\ydiagram{1,1}} & \\
		-\tfrac{1}{2}X_{02,12}^{\ydiagram{1},\ydiagram{1,1}} & F_{02}^{\ydiagram{1},\ydiagram{1,1}} & -X_{02,01}^{\ydiagram{1},\ydiagram{1,1}} \\
		& -X_{01,02}^{\ydiagram{1},\ydiagram{1,1}} & F_{01}^{\ydiagram{1},\ydiagram{1,1}}
	\end{pmatrix} \\
\chi_{\ydiagram{1},\ydiagram{2}} &
	\begin{pmatrix}
	e_{11} & e_{12} & \\ e_{21} & e_{22} & e_{23} \\ & e_{32} & e_{33}
	\end{pmatrix}&\mapsto&\begin{pmatrix}
		F_0^{\ydiagram{1},\ydiagram{2}} & X_{0,1}^{\ydiagram{1},\ydiagram{2}} & \\
		-X_{1,0}^{\ydiagram{1},\ydiagram{2}} & F_1^{\ydiagram{1},\ydiagram{2}} & X_{1,02}^{\ydiagram{1},\ydiagram{2}} \\
		& X_{02,1}^{\ydiagram{1},\ydiagram{2}} & F_{02}^{\ydiagram{1},\ydiagram{2}}
	\end{pmatrix} \\
\chi_{\ydiagram{1,1},\ydiagram{1}} &
	\begin{pmatrix}
	e_{11} & e_{12} & \\ e_{21} & e_{22} & e_{23} \\ & e_{32} & e_{33}
	\end{pmatrix}&\mapsto&\begin{pmatrix}
		F_{12}^{\ydiagram{1,1},\ydiagram{1}}  & X_{12,02}^{\ydiagram{1,1},\ydiagram{1}} & \\ 
		X_{02,12}^{\ydiagram{1,1},\ydiagram{1}} & F_{02}^{\ydiagram{1,1},\ydiagram{1}}  & -X_{02,1}^{\ydiagram{1,1},\ydiagram{1}} \\
		& -X_{1,02}^{\ydiagram{1,1},\ydiagram{1}} & F_1^{\ydiagram{1,1},\ydiagram{1}} 
	\end{pmatrix}
\end{tabular}
\caption{Morphisms $\psi_{\lambda,\mu} : \IZ[\tfrac{1}{2}]^{d_{\lambda,\mu}\times d_{\lambda,\mu}} \twoheadrightarrow F^{\lambda,\mu} \IZ[\tfrac{1}{2}]\Omega F^{\lambda,\mu}$ for $2\leq d_{\lambda,\mu}\leq 3$}
\label{tab:inverses_B3}
\end{table}

In table \ref{tab:inverses_B3} are all the morphisms $\psi_{\lambda,\mu}: \IZ[\tfrac{1}{2}]^{d_{\lambda,\mu}\times d_{\lambda,\mu}} \to F^{\lambda,\mu} \IZ[\tfrac{1}{2}]\Omega F^{\lambda,\mu}$ for the characters of degree two and three where we use the notation $X_{IJ}^{\lambda,\mu}:=F^{\lambda,\mu} X_{IJ} F^{\lambda,\mu}$.

We have used again that $\IZ[\tfrac{1}{2}]^{d\times d}$ is the $\IZ[\tfrac{1}{2}]$-algebra given by the presentation in Lemma \ref{lemma:matrix_alg_quiver}. These relations are satisfied by construction of the $F^{\lambda,\mu}$ and therefore all the maps in the table are well-defined algebra morphisms.

\medbreak
The construction of $\psi_{\lambda,\mu}$ ensures that all idempotents $F_I^{\lambda,\mu}$ for all $I\subseteq S$ and all $F^{\lambda,\mu} X_{IJ} F^{\lambda,\mu}$ for transversal edges $I\leftrightarrows J$ are contained in the image of $\psi_{\lambda,\mu}$. For $(\lambda,\mu) \in\Set{ (\emptyset,\ydiagram{2,1}),(\ydiagram{2,1},\emptyset),(\ydiagram{2},\ydiagram{1}),(\ydiagram{1},\ydiagram{1,1})}$ this is already enough the guarantee surjectivity, because all edges in the component of $F^{\lambda,\mu}$ are transversal edges.

\medbreak
For $\chi_{\ydiagram{1},\ydiagram{2}}$ on the other hand there could be an inclusion edge $\Set{0}\to\Set{0,2}$ and for $\chi_{\ydiagram{1,1},\ydiagram{1}}$ there could be an inclusion edge $\Set{1}\to\Set{1,2}$. To complete the proof we show that this is not the case by using the $(\beta^{20})$-relation:
\begin{align*}
	X_{02,0}^{\ydiagram{1},\ydiagram{2}} &= F^{\ydiagram{1},\ydiagram{2}} X_{02,0} F_0^{\ydiagram{1},\ydiagram{2}} \\
	&= F^{\ydiagram{1},\ydiagram{2}} X_{02,0} (X_{0,1} F_1^{\ydiagram{1},\ydiagram{2}} X_{1,0}) \\
	&= F^{\ydiagram{1},\ydiagram{2}} (X_{02,0} X_{0,1}) F_1^{\ydiagram{1},\ydiagram{2}} X_{1,0} \\
	&\overset{\mathclap{(\beta^{20})}}{=} F^{\ydiagram{1},\ydiagram{2}} (X_{02,2} X_{2,1} + X_{02,12}X_{12,1} - X_{02,01} X_{01,1}) F_1^{\ydiagram{1},\ydiagram{2}} X_{1,0} \\
	&= (X_{02,2}^{\ydiagram{1},\ydiagram{2}} X_{2,1}^{\ydiagram{1},\ydiagram{2}} + X_{02,12}^{\ydiagram{1},\ydiagram{2}} X_{12,1}^{\ydiagram{1},\ydiagram{2}} - X_{02,01}^{\ydiagram{1},\ydiagram{2}} X_{01,1}^{\ydiagram{1},\ydiagram{2}}) F_1^{\ydiagram{1},\ydiagram{2}} X_{1,0}
\end{align*}
All summands within the brackets disappear because the $\ydiagram{1},\ydiagram{2}$ component in the graph \ref{fig:gyoja:better_compatibility_graph_B3} has no vertices labelled $\Set{2}$, $\Set{12}$ or $\Set{01}$. Using the symmetry given by $\delta$ the equation $F_{12}^{\ydiagram{1,1},\ydiagram{1}} X_{12,1} = 0$ also holds.
\end{proof}

\subsection{Rank 4}

\begin{theorem}
The decomposition conjecture is true for type $A_4$.
\end{theorem}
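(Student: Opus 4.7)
The plan is to extend the case-by-case argument used for $A_3$ to $A_4$. Since $\IZ$ remains a good ring, everything may be done integrally. The irreducible characters of $W(A_4)=S_5$ are indexed by the seven partitions of $5$, with dimensions $(d_{(5)}, d_{(4,1)}, d_{(3,2)}, d_{(3,1^2)}, d_{(2^2,1)}, d_{(2,1^3)}, d_{(1^5)}) = (1,4,5,6,5,4,1)$. The goal is, for every $I \subseteq S = \{1,2,3,4\}$, to produce an orthogonal idempotent decomposition $E_I = \sum_{\lambda\vdash 5} F_I^\lambda$ in $\Omega$, set $F^\lambda := \sum_I F_I^\lambda$, and verify Z1--Z4.

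The primary engine is again the $(\alpha)$-relation at $m_{st}=3$: for each subset $I$ and each pair $s\in I,\ t\notin I$ with $s,t$ adjacent in the Dynkin diagram, the relation $P_{II}^2(s,t)=E_I$ becomes, after applying the $m=2$ relations that kill all non-transversal edges, an identity expressing $E_I$ as a sum of loop idempotents $X_{IK}X_{KI}$ ranging over the transversal neighbours $K$ of $I$ in $\mathcal{Q}_W$. Off-diagonal $(\alpha)$-relations of the form $P_{IJ}^2(s,t)=0$ then supply orthogonality between these loop idempotents. This is exactly the mechanism that produced $E_2 = F_2^{(3,1)} + F_2^{(2,2)}$ for $A_3$; at $A_4$ several subsets have three transversal neighbours, so some $E_I$ break into three loop pieces, and finer refinement must be extracted from $(\alpha)$-relations belonging to longer chains in $\mathcal{Q}_W$. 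Throughout I would exploit the order-four group generated by the antiautomorphism $\delta$ from Corollary~4 (conjugation of partitions) and the Dynkin graph involution $1\leftrightarrow 4,\ 2\leftrightarrow 3$; together they cut the number of cases by roughly four, with only the self-conjugate character $(3,1,1)$ requiring genuinely independent treatment. Lemma~\ref{lemma:idempotenttransport} propagates each local decomposition along transversal edges, and the central coherence check is that the decompositions of different $E_I$ obtained this way agree globally across $\mathcal{Q}_W$.

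With $\{F_I^\lambda\}$ in hand, Z1 and Z2 hold by construction. For Z3 I would argue as in the $B_3$ proof: for each transversal edge $I\leftrightarrow J$, compute $X_{IJ} F_J^\mu$ and $F_I^\lambda X_{IJ}$ via the $(\alpha)$-relations and show they vanish whenever $\lambda$ and $\mu$ lie in different dominance classes. The refined compatibility graph then splits into seven connected components, one per partition, and the desired partial order on $\Irr(W)$ is the dominance order on partitions of $5$. For Z4 I would apply Lemma~\ref{lemma:matrix_alg_quiver} to each component separately: fix a spanning path through the transversal edges of the component, send the generators $e_{ij}$ of $\IZ^{d_\lambda\times d_\lambda}$ to the corresponding $F_I^\lambda$ and $F^\lambda X_{IJ} F^\lambda$ with normalising scalars dictated by the loop identities, and confirm surjectivity. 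The latter reduces, exactly as in the $B_3$ proof, to excluding stray inclusion edges within each component, which is done by the $(\beta)$-relations.

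The main obstacle is combinatorial. With $16$ subsets, substantially more transversal edges than in $A_3$, and in particular the $6$-dimensional character $(3,1^2)$ whose component must arrange into a quiver fitting a $6\times 6$ matrix algebra (so the idempotents $F_I^{(3,1^2)}$ must together have total $\IZ$-rank $36$), the correct global decomposition is not obvious a priori. The practical strategy I would follow is to anchor the construction at the extremal idempotents $F_\emptyset^{(5)} = E_\emptyset$ and $F_S^{(1^5)} = E_S$, then transport inward via Lemma~\ref{lemma:idempotenttransport} edge by edge through $\mathcal{Q}_W$, using $\delta$ and the Dynkin involution at every step to halve the explicit verification.
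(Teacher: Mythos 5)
Your overall strategy---decompose each $E_I$ into loop idempotents via the $(\alpha)$-relations, propagate with Lemma \ref{lemma:idempotenttransport}, exploit $\delta$ and the Dynkin involution, read Z3 off a refined compatibility graph, and build the $\psi_\lambda$ from Lemma \ref{lemma:matrix_alg_quiver} while excluding stray inclusion edges by $(\beta)$-relations---is exactly the paper's. But the plan defers precisely the two points that carry the real content, and one of its stated mechanisms fails outright. The ``central coherence check'' you flag is not routine: the vertices $\Set{14}$ and $\Set{23}$ each meet two transversal edges (to $\Set{13}$ and to $\Set{24}$), and transport was only performed along $\Set{13}\leftrightarrows\Set{14}$ and $\Set{13}\leftrightarrows\Set{23}$. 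One must prove that the idempotents obtained by transporting from $\Set{24}$ coincide with those already defined, i.e.\ $X_{14,24}F_{24}^{\mu}X_{24,14}=F_{14}^{\mu}$, since otherwise the edges $\Set{14}\leftrightarrows\Set{24}$ and $\Set{23}\leftrightarrows\Set{24}$ could connect the $(3,2)$-, $(3,1^2)$- and $(2^2,1)$-components and Z3 would fail. This identity does not follow formally from Lemma \ref{lemma:idempotenttransport}; the paper obtains it by expanding $X_{13,14}X_{14,24}$ via the $(\beta^{13})$-relation into three summands and killing each one using previously established vanishings together with $(\beta^{12})$ and $(\alpha^{23})$. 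Nothing in your proposal supplies this computation or a substitute for it.

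Second, for $\lambda=(3,1^2)$ the recipe ``fix a spanning path through the transversal edges of the component'' cannot be executed. In that component $\Set{12}$ and $\Set{34}$ have degree one, $\Set{13}$ and $\Set{24}$ have degree three, and $\Set{14}$, $\Set{23}$ are each adjacent only to $\Set{13}$ and $\Set{24}$ (there is no transversal edge $\Set{14}\leftrightarrows\Set{23}$ because $1\in\Set{14}\setminus\Set{23}$ commutes with $3\in\Set{23}\setminus\Set{14}$); a short check shows no Hamiltonian path exists in this graph. The paper circumvents this by ordering the vertices $\Set{12},\Set{13},\Set{14},\Set{23},\Set{24},\Set{34}$ and sending the chain generator $e_{34}$ of $\IZ^{6\times 6}$ to the \emph{composite} $X_{14,13}^{\lambda}X_{13,23}^{\lambda}$, verifying the loop relation for this composite via Lemma \ref{lemma:idempotenttransport}, and then separately showing that the genuine edge elements $X_{23,13}^{\lambda}$ and $X_{24,14}^{\lambda}$ still lie in the image (the latter needs the $(\beta^{24})$-relation). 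Without this device the well-definedness and surjectivity of $\psi_{(3,1^2)}$ are not established, so as written your Z4 argument has a gap for the six-dimensional character.
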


\begin{proof}
We will use an analogous strategy as before and use the relations of type $(\alpha)$. Again we will write $(\alpha^{st})$ to denote that we have used the relation of type $(\alpha)$ belonging to the edge $s - t$ of the Dynkin diagram and similarly for $(\beta)$-type relations.

Our goal is to decompose the compatibility graph as in Figure \ref{fig:gyoja:better_compatibility_graph_A4} (inclusion edges have been omitted for the sake of clarity).

\medbreak
Step 1: Verifying Z1 and Z2.

We will define idempotents $F_I^\lambda\leq E_I$ for all $I\subseteq S, \lambda\in\Irr(W)$ and set $F^\lambda := \sum_I F_I^\lambda$. First note that by lemma \ref{lemma:matrix_alg_quiver} the idempotents of a matrix algebra are given by evaluating loops in the quiver. Looking at the quiver we want to arrive at in Figure \ref{fig:gyoja:better_compatibility_graph_A4}, we therefore define the idempotents $F_I^\lambda$ either as one of the $E_I$ at the boundary of the compatibility graph or as loops connecting inner vertices to those outer vertices. More precisely we will use the definitions in table \ref{table:idempotents_A4} where all $F_I^\lambda$ not appearing there are understood to be defined as zero.

\begin{table}[htp]
\ytableausetup{boxsize=0.7em}

\hspace{-35pt}\footnotesize
\begin{tabular}{C{180pt}ccc}
 & $\lambda$ & $I$ & $F_I^\lambda$ \\
\hline

\begin{tikzpicture}[every node/.style={scale=0.7}]
\node[Vertex] (E_1234) at (0,9.0) {$1234$};	
\end{tikzpicture} & \ydiagram{1,1,1,1,1} & $\Set{1,2,3,4}$ & $E_{1234}$ \\

\cline{2-4}

\begin{tikzpicture}[every node/.style={scale=0.7}]
\node[Vertex] (E_123e) at (-3.0,8.0) {$123$};
\node[Vertex] (E_124e) at (-1.0,8.0) {$124$};
\node[Vertex] (E_134e) at (+1.0,8.0) {$134$};
\node[Vertex] (E_234e) at (+3.0,8.0) {$234$};
	
\path[EdgeT]
	(E_123e) edge (E_124e)
	(E_124e) edge (E_134e)
	(E_134e) edge (E_234e);
\end{tikzpicture} & \ydiagram{2,1,1,1} &
$\begin{array}{c}
\Set{1,2,3} \\
\Set{1,2,4} \\
\Set{1,3,4} \\
\Set{2,3,4}
\end{array}$ & $\begin{array}{c}
E_{123} \\
X_{124,123} X_{123,124} \\
X_{134,234} X_{234,134} \\
E_{234}
\end{array}$ \\

\cline{2-4}

\begin{tikzpicture}[every node/.style={scale=0.7}]
\node[Vertex] (E_124d) at (-1.0,7.0) {$124$};
\node[Vertex] (E_13d)  at (-1.0,5.5) {$13$};
\node[Vertex] (E_23d)  at ( 0.0,6.0) {$23$};
\node[Vertex] (E_24d)  at (+1.0,5.5) {$24$};
\node[Vertex] (E_134d) at (+1.0,7.0) {$134$};

\path[EdgeT]
	(E_124d) edge (E_13d)
	(E_13d) edge (E_23d)
	(E_23d) edge (E_24d)
	(E_24d) edge (E_134d);
\end{tikzpicture} & \ydiagram{2,2,1} &
$\begin{array}{c}
\Set{1,2,4} \\
\Set{1,3} \\
\Set{2,3} \\
\Set{2,4} \\
\Set{1,3,4} \end{array}$ & $\begin{array}{c}
X_{124,13} X_{13,124} \\
X_{13,124}X_{124,13} \\
X_{23,13}X_{13,124}X_{124,13}X_{13,23} \\
X_{24,134}X_{134,24} \\
X_{134,24}X_{24,134}
\end{array}$ \\

\cline{2-4}

\begin{tikzpicture}[every node/.style={scale=0.7}]
\node[Vertex] (E_12c) at (-3.0,4.5) {$12$};
\node[Vertex] (E_13c) at (-1.0,4.5) {$13$};
\node[Vertex] (E_14c) at ( 0.0,4.0) {$14$};
\node[Vertex] (E_23c) at ( 0.0,5.0) {$23$};
\node[Vertex] (E_24c) at (+1.0,4.5) {$24$};
\node[Vertex] (E_34c) at (+3.0,4.5) {$34$};

\path[EdgeT]
	(E_13c) edge (E_23c) edge (E_14c) edge (E_12c)
	(E_24c) edge (E_23c) edge (E_14c) edge (E_34c);
\end{tikzpicture} & \ydiagram{3,1,1} &
$\begin{array}{c}
 \Set{1,2} \\
 \Set{1,3} \\
 \Set{2,3} \\
 \Set{1,4} \\
 \Set{2,4} \\
 \Set{3,4}
\end{array}$ & $\begin{array}{c}
E_{12} \\
X_{13,12}X_{12,13} \\
X_{23,13}X_{13,12}X_{12,13}X_{13,23} \\
X_{14,24}X_{24,34}X_{34,24}X_{24,14} \\
X_{24,34}X_{34,24} \\
E_{34}
\end{array}$ \\

\cline{2-4}

\begin{tikzpicture}[every node/.style={scale=0.7}]
\node[Vertex] (E_2b)  at (-1.0,2.0) {$2$};
\node[Vertex] (E_13b) at (-1.0,3.5) {$13$};
\node[Vertex] (E_14b) at ( 0.0,3.0) {$14$};
\node[Vertex] (E_24b) at (+1.0,3.5) {$24$};
\node[Vertex] (E_3b)  at (+1.0,2.0) {$3$};

\path[EdgeT]
	(E_2b) edge (E_13b)
	(E_13b) edge (E_14b)
	(E_14b) edge (E_24b)
	(E_24b) edge (E_3b);
\end{tikzpicture} & \ydiagram{3,2} &
$\begin{array}{c}
\Set{2} \\
\Set{1,3} \\
\Set{1,4} \\
\Set{2,4} \\
\Set{3}
\end{array}$ & $\begin{array}{c}
X_{2,13}X_{13,2} \\
X_{13,2}X_{2,13} \\
X_{14,13}X_{13,2}X_{2,13}X_{13,14} \\
X_{24,3}X_{3,24} \\
X_{3,24}X_{24,3}
\end{array}$ \\

\cline{2-4}

\begin{tikzpicture}[every node/.style={scale=0.7}]
\node[Vertex] (E_1a) at (-3.0,1) {$1$};
\node[Vertex] (E_2a) at (-1.0,1) {$2$};
\node[Vertex] (E_3a) at (+1.0,1) {$3$};
\node[Vertex] (E_4a) at (+3.0,1) {$4$};

\path[EdgeT]
	(E_1a) edge (E_2a)
	(E_2a) edge (E_3a)
	(E_3a) edge (E_4a);
\end{tikzpicture} & \ydiagram{4,1} &
$\begin{array}{c}
\Set{1} \\
\Set{2} \\
\Set{3} \\
\Set{4}
\end{array}$ & $\begin{array}{c}
E_1 \\
X_{2,1} X_{1,2} \\
X_{3,4} X_{4,3} \\
E_4
\end{array}$ \\

\cline{2-4}

\begin{tikzpicture}[every node/.style={scale=0.7}]
\node[Vertex] (E_empty) at (0,0) {$\emptyset$};
\end{tikzpicture} & \ydiagram{5} & $\emptyset$ & $E_\emptyset$ \\

\hline

\end{tabular}
\captionof{figure}{The refined compatbility graph for $A_4$}
\label{fig:gyoja:better_compatibility_graph_A4}

\captionof{table}{Vertex idempotents of the refined compatibility graph for $A_4$}
\label{table:idempotents_A4}
\end{table}

\ytableausetup{boxsize=0.3em}
Once these elements have been defined, we have to prove that they are in fact idempotents and $E_I=\sum_\lambda F_I^\lambda$ is an orthogonal decomposition. Z2 will then be satisfied because $F^\lambda E_I = F_I^\lambda = E_I F^\lambda$ holds by definition.

The $(\alpha^{12})$-relation implies
\begin{align*}
E_1 &= X_{1,2} X_{2,1}
\end{align*}
from which it follows that $F_2^{\ydiagram{4,1}}$ is an idempotent $\leq E_2$, namely the idempotent obtained by transport of $E_1\leq E_1$ along the edge $\Set{1} \rightarrow \Set{2}$. From the $(\alpha^{12})$-relation
\[E_2 = X_{2,1} X_{1,2} + X_{2,13} X_{13,2}\]
we deduce that $F_2^{\ydiagram{3,2}}$ is the leftover idempotent of this transport. By applying the non-trivial graph automorphism we obtain that $F_3^{\ydiagram{4,1}}$ and $F_{24}^{\ydiagram{3,2}}$ are idempotents as well and by applying the antiautomorphism $\delta$ we find that $F_{134}^{\ydiagram{2,1,1,1}}$, $F_{13}^{\ydiagram{2,2,1}}$, $F_{124}^{\ydiagram{2,1,1,1}}$ and $F_{2,4}^{\ydiagram{2,2,1}}$ are idempotents too. And because Lemma \ref{lemma:idempotenttransport} also gives us orthogonality with the leftover idempotent, we're done done with all except the two-element subsets of $S$.

\medbreak
By transporting $F_2^{\ydiagram{3,2}}=X_{2,13} X_{13,2}$ along $\Set{2}\to\Set{13}$ we obtain the idempotent
\begin{align*}
	X_{13,2} F_2^{\ydiagram{3,2}} X_{2,13} &= X_{13,2} E_2 X_{2,13} - X_{13,2} F_2^{\ydiagram{4,1}} X_{2,13} \\
	&= X_{13,2} X_{2,13} - X_{13,2}X_{2,1} \smash{\underbrace{X_{1,2} X_{2,13}}_{=0 \text{ by }(\alpha^{12})}} \\
	&= F_{13}^{\ydiagram{3,2}}
\end{align*}

By applying $\delta$ and the graph automorphism we find that $F_{13}^{\ydiagram{2,2,1}}$, $F_{24}^{\ydiagram{3,2}}$ and $F_{134}^{\ydiagram{2,2,1}}$ are also idempotents.

\medbreak
The $(\alpha^{23})$-relation
\[E_{12} = X_{12,13} X_{13,12}\]
implies that $F_{13}^{\ydiagram{3,1,1}}$ is the idempotent obtained by transporting $E_{12}$ along $\Set{12}\to\Set{13}$.

Considering the $(\alpha^{32})$-relations
\begin{align*}
E_{13} &= X_{13,2} X_{2,13} + X_{13,12} X_{12,13} + X_{13,124} X_{124,13} = F_{13}^{\ydiagram{3,2}} + F_{13}^{\ydiagram{3,1,1}} + F_{13}^{\ydiagram{2,2,1}} \\
0 &= X_{2,13} X_{13,12} = X_{2,13} X_{13,124} \\
0 &= X_{12,13} X_{13,2} = X_{12,13} X_{13,124} \\
0 &= X_{124,13} X_{13,2} = X_{124,13} X_{13,12}
\end{align*} 
we find that $F_{13}^{\ydiagram{3,2}}, F_{13}^{\ydiagram{3,1,1}}, F_{13}^{\ydiagram{2,2,1}}$ constitute a orthogonal decomposition of $E_{13}$.

By applying the graph automorphism we find that $F_{24}^{\ydiagram{3,1,1}}$, $F_{24}^{\ydiagram{3,2}}$ and $F_{24}^{\ydiagram{2,2,1}}$ are pairwise idempotents as well.

\medbreak
We are now almost done. We need still need to look at the inner most vertices $\Set{2,3}$ and ${1,4}$ of the compatibility graph. The following $(\alpha^{34})$-relation holds:
\[E_{13} = X_{13,14} X_{14,13} + X_{13,124} X_{124,13}\]
This means $X_{13,14}X_{14,13} = F_{13}^{\ydiagram{3,2}}+ F_{13}^{\ydiagram{3,1,1}}$. Transporting these two idempotents along $\Set{13}\to\Set{14}$ we obtain $F_{14}^{\ydiagram{3,2}}$ and $F_{14}^{\ydiagram{3,1,1}}$. By symmetry $F_{23}^{\ydiagram{3,1,1}}$and $F_{23}^{\ydiagram{2,2,1}}$ are idempotents as well.

From the $(\alpha^{43})$- and the $(\alpha^{21})$-relation
\[E_{14} = X_{14,13} X_{13,14} \quad\text{and}\quad E_{23} = X_{23,13} X_{13,23}\]
we can infer that the two leftover idempotents for these transports vanish. Therefore we get orthogonal decompositions $E_{14} = F_{14}^{\ydiagram{3,2}} + F_{14}^{\ydiagram{3,1,1}}$ and $E_{23} = F_{23}^{\ydiagram{3,1,1}} + F_{23}^{\ydiagram{2,2,1}}$.

\bigbreak
Step 2: Verifying Z3.

We will prove that the only edges between the components not displayed in Figure \ref{fig:gyoja:better_compatibility_graph_A4} are inclusion edges from which it follows that the dominance ordering on $\lbrace \lambda \vdash 5\rbrace$ is the sought-after partial ordering. Because we have constructed all idempotents by transport of idempotents, most transversal edges split into parallel edges. That eliminates almost all possible transversal edges between different components.

\medbreak
The $(\alpha^{32})$-relation
\[X_{13,2} X_{2,3} = 0\]
implies that $F_2^{\ydiagram{3,2}} X_{2,3} = 0$ so that there is no transversal edge emanating from $E_3=F_3^{\ydiagram{4,1}}+ F_3^{\ydiagram{3,2}}$ and going to $F_2^{\ydiagram{3,2}}$. By symmetry there are no transversal edges going from $E_2$ to $F_3^{\ydiagram{3,2}}$ which shows that there are only inclusion edges between the $\ydiagram{4,1}$ and the $\ydiagram{3,2}$ component. Applying $\delta$ we find the same between the $\ydiagram{2,2,1}$ and the $\ydiagram{2,1,1,1}$ component.

\medbreak
The only other possibility are transversal edges of the form $\Set{14} \leftrightarrows \Set{24}$ and $\Set{23} \leftrightarrows \Set{24}$ because we have not used idempotent transport along these edges. Instead we worked with $\Set{13} \leftrightarrows \Set{14}$ and $\Set{13} \leftrightarrows \Set{23}$.

\medbreak
Consider the $(\beta^{24})$-relation
\[X_{24,23} X_{23,13}+ X_{24,2} X_{2,13} = X_{24,14}X_{14,13} + X_{24,134}X_{134,13}\]
which implies
\begin{align*}
F_{24}^{\ydiagram{3,2}} \cdot X_{24,14} \cdot F_{14}^{\ydiagram{3,1,1}} &= F_{24}^{\ydiagram{3,2}} \cdot \underbrace{X_{24,14} \cdot X_{14,13}} F_{13}^{\ydiagram{3,1,1}} X_{13,14} \\
&\overset{\mathclap{(\beta^{24})}}{=} F_{24}^{\ydiagram{3,2}} \cdot (-X_{24,134}X_{134,13}+X_{24,23}X_{23,13}+X_{24,2}X_{2,13}) F_{13}^{\ydiagram{3,1,1}} X_{24,14} \\
&=  - X_{24,3}\underbrace{X_{3,24} X_{24,134}}_{=0 \text{ by }(\alpha^{32})} X_{134,13} F_{13}^{\ydiagram{3,1,1}} X_{24,14} \\
&\phantom{=} + X_{24,3} \underbrace{X_{3,24} X_{24,23}}_{=0 \text{ by }(\alpha^{43})} X_{23,13} F_{13}^{\ydiagram{3,1,1}} X_{24,14} \\
&\phantom{=} + X_{24,3}X_{3,24} X_{24,2} \underbrace{X_{2,13} F_{13}^{\ydiagram{3,1,1}}}_{=0} X_{24,14} \\
&= 0
\end{align*}
Similarly combining the $(\beta)$- and $(\alpha)$-relations one shows that the transversal edge $\Set{14}\to\Set{24}$ splits into a pair of parallel edges as displayed in Figure \ref{fig:gyoja:better_compatibility_graph_A4} and all four of the possible cross-component edges are indeed zero. Applying $\delta$ we find the same holds between the $\ydiagram{3,1,1}$ and the $\ydiagram{2,2,1}$ component.

\medbreak
This shows that even for the edges $\Set{14} \leftrightarrows \Set{24}$ and $\Set{23} \leftrightarrows \Set{24}$ the idempotents on both sides are given by transporting idempotents and hence there can only be parallel edges as depicted in Figure \ref{fig:gyoja:better_compatibility_graph_A4}. The only other possible edges are those not depicted in this picture in other words the inclusion edges.

\bigbreak
Step 3: Verifying Z4.

We will construct surjective homomorphisms $\psi_\lambda: \IZ^{d_\lambda\times d_\lambda} \to F^\lambda \Omega F^\lambda$. We will use the presentation of $\IZ^{d_\lambda\times d_\lambda}$ from Lemma \ref{lemma:matrix_alg_quiver}.

We will use the abbreviation $X_{IJ}^\lambda := F^\lambda X_{IJ} F^\lambda$. By construction the equation $X_{IJ}^\lambda X_{JI}^\lambda = F_I^\lambda$ holds for all transversal edges $I\leftrightarrows J$ if $F_I^\lambda$ and $F_J^\lambda$ are both non-zero as well as $X_{IJ}^\lambda = X_{JI}^\lambda = 0$ otherwise.

\medbreak
If we denote the vertices of a component in Figure \ref{fig:gyoja:better_compatibility_graph_A4} with its index set (which is possible without conflicts since no index set occurs more than once), then
\[\psi_\lambda:\IZ^{d_\lambda\times d_\lambda}\to F^\lambda \Omega F^\lambda, e_{II} \mapsto F_{I}^\lambda, e_{IJ} \mapsto X_{IJ}^\lambda\]
defines a morphism $\psi_\lambda: \IZ^{d_\lambda\times d_\lambda}\to F^\lambda \Omega F^\lambda$ for those components which are straight lines without their inclusion edges, that is all components except the one labelled with $\lambda=\ydiagram{3,1,1}$.

\medbreak
We will prove surjectivity of $\psi_\lambda$ which is equivalent to showing that all $X_{IJ}^\lambda$ are contained in the image $\psi_\lambda$. For the transversal edges this is clear from the construction. Therefore we are done for $\lambda=\ydiagram{5},\ydiagram{4,1},\ydiagram{2,1,1,1}$ and $\ydiagram{1,1,1,1,1}$.

For $\lambda=\ydiagram{3,2}$ we must consider the inclusion edges $X_{13,3}$ and $X_{24,2}$. We use the relations of type $(\beta)$:
\begin{align*}
	X_{24,2}^{\ydiagram{3,2}} &= F_{24}^{\ydiagram{3,2}} \cdot X_{24,2} \cdot F_2^{\ydiagram{3,2}} \\
	&= X_{24,3} X_{3,24} \cdot (X_{24,2} \cdot X_{2,13}) X_{13,2} \\
	&\overset{\mathclap{(\beta^{42})}}{=} X_{24,3} X_{3,24} (X_{24,14} X_{14,13} + X_{24,134} X_{134,13} -X_{24,23} \underbrace{X_{23,13}) X_{13,2}}_{=0} \\
	&= X_{24,3} X_{3,24} X_{24,14} X_{14,13} X_{13,2} + X_{24,3} \underbrace{X_{3,24} X_{24,134}}_{=0} X_{134,13} X_{13,2} & \\
	&= X_{24,3}^{\ydiagram{3,2}} X_{3,24}^{\ydiagram{3,2}} X_{24,14}^{\ydiagram{3,2}} X_{14,13}^{\ydiagram{3,2}} X_{13,2}^{\ydiagram{3,2}} \quad\text{because } X_{24,2}^{\ydiagram{3,2}}\in F^{\ydiagram{3,2}}\Omega F^{\ydiagram{3,2}}\\
	&\in\im(\psi_{\ydiagram{3,2}})
\end{align*}
By applying the graph automorphism we obtain $X_{13,3}^{\ydiagram{3,2}}\in\im(\psi_{\ydiagram{3,2}})$ and by applying the antiautomorphism $\delta$ we obtain $X_{124,24}^{\ydiagram{2,2,1}},X_{134,13}^{\ydiagram{2,2,1}}\in\im(\psi_{\ydiagram{2,2,1}})$. Therefore all $X_{IJ}^\lambda$ are contained in the image of $\psi_\lambda$ for $\lambda=\ydiagram{3,2}, \ydiagram{2,2,1}$ and surjectivity holds in both cases.

\medbreak
It remains to handle the case $\lambda=\ydiagram{3,1,1}$. We sort the two element sets in the following order: $\Set{1,2}$, $\Set{1,3}$, $\Set{1,4}$, $\Set{2,3}$, $\Set{2,4}$, $\Set{3,4}$ and claim that the following homorphism ${\psi_{\ydiagram{3,1,1}}: \IZ^{6\times 6}\to F^{\ydiagram{3,1,1}}\Omega F^{\ydiagram{3,1,1}}}$ is well-defined:
\[\begin{pmatrix}
	e_{11} & e_{12} & & & & \\
	e_{21} & e_{22} & e_{23} & & & \\
	& e_{32} & e_{33} & e_{34} & & \\
	& & e_{43} & e_{44} & e_{45} & \\
	& & & e_{54} & e_{55} & e_{56} \\
	& & & & e_{65} & e_{66}
\end{pmatrix} \mapsto \left(\begin{smallmatrix}
	F_{12}^\lambda & X_{12,13}^\lambda  & & & & \\
	X_{13,12}^\lambda & F_{13}^\lambda & X_{13,14}^\lambda & & & \\
	& X_{14,13}^\lambda & F_{14}^\lambda & X_{14,13}^\lambda X_{13,23}^\lambda & & \\
	& & X_{23,13}^\lambda X_{13,14}^\lambda &  F_{23}^\lambda & X_{23,24}^\lambda & \\
	& & & X_{24,23}^\lambda & F_{24}^\lambda & X_{24,34}^\lambda \\
	& & & & X_{34,24}^\lambda & F_{34}^\lambda
\end{smallmatrix}\right)\]
Most relations from Lemma \ref{lemma:matrix_alg_quiver} are satisfied by construction of the idempotents. We still need to verify
\[X_{14,13}^\lambda X_{13,23}^\lambda \cdot X_{23,13}^\lambda X_{13,14}^\lambda = F_{14}^\lambda \quad\text{and}\]
\[X_{23,13}^\lambda X_{13,14}^\lambda \cdot X_{14,13}^\lambda X_{13,23}^\lambda = F_{23}^\lambda.\]
These equations follow from the $(\alpha)$-relations $E_{13}=X_{13,23}X_{23,13}$ and $E_{24}=X_{24,14}X_{14,24}$.
 %from Lemma \ref{lemma:idempotenttransport}, because the transported idempotent of $F_{23}^\lambda$ along the edge $\Set{23}\to\Set{13}$ occurs in the expression $X_{13,23}^\lambda \cdot X_{23,13}^\lambda = F^\lambda(X_{13,23} F_{23}^\lambda X_{23,13}^\lambda)F^\lambda$ which is exactly the invers transport to the one we used to define $F_{23}^\lambda$ in the first place. Hence the element in the brackets equals $F_{13}^\lambda$. We infer $X_{14,13}^\lambda X_{13,23}^\lambda \cdot X_{23,13}^\lambda X_{13,14}^\lambda = F^\lambda( X_{14,13} F_{13}^\lambda X_{13,14})F^\lambda = F^\lambda F_{14}^\lambda F^\lambda=F_{14}^\lambda$ because of the definition of $F_{14}^\lambda$. Applying the antiautomorphism $\delta$ also implies the second equation we needed.

We verify the surjectivity of $\psi_\lambda$. By construction most $X_{IJ}^\lambda$ are already contained in the image. We only have to consider the edges between $F_{14}^\lambda \leftrightarrows F_{24}^\lambda$ and $F_{13}^\lambda \leftrightarrows F_{23}^\lambda$.
\begin{align*}
	X_{23,13}^\lambda &= X_{23,13}^\lambda F_{13}^\lambda \\
	&= X_{23,13}^\lambda (X_{13,14}^\lambda X_{14,13}^\lambda) \\
	&= (X_{23,13}^\lambda X_{13,14}^\lambda) X_{14,13}^\lambda \in \im(\psi_\lambda)
\end{align*}
And
\begin{align*}
X_{24,14}^\lambda &= X_{24,14}^\lambda F_{14}^\lambda \\
&=X_{24,14}^\lambda (X_{14,13}^\lambda X_{13,14}^\lambda) \\
&=(X_{24,14}^\lambda X_{14,13}^\lambda) X_{13,14}^\lambda \\
&\overset{\mathclap{(\beta)}}{=} (X_{24,23}^\lambda X_{23,13}^\lambda) X_{13,14}^\lambda \\
&= X_{24,23}^\lambda (X_{23,13}^\lambda X_{13,14}^\lambda) \in\im(\psi_\lambda)
\end{align*}
Applying $\delta$ we find $X_{13,23}^\lambda, X_{14,24}^\lambda\in\im(\psi_\lambda)$ as well. Therefore $\psi_\lambda$ is surjective.

\end{proof}

%\begin{remark}
%One can also verify the decomposition conjecture for $A_5$ with these techniques, but that is an even longer proof, especially for $\lambda=\ydiagram{3,2,1}$. A phenomenon is worth mentioning because it doesn't occur in lower dimensions: In the refined compatibility graph for $A_5$ there are actually two vertices for $I=\Set{1,3,5}$ and $\lambda=\ydiagram{3,2,1}$ so that $F_{135}^{\ydiagram{3,2,1}}$ is decomposable in contrast to the lower dimensional examples we have seen here where $F_I^\lambda$ is always indecomposable.
%\end{remark}

%\nocite{hahn2013diss}
\bibliography{wgrph}

\end{document}